\newtheorem{theorem}{Theorem}[section]
\theoremstyle{definition}
\numberwithin{equation}{section}
\title{
On the effects of protection zone and directed population flux  
in prey-predator dynamics
}
\author{Kousuke Kuto\footnote{
Corresponding author.
}
\footnote{The first author was supported by
JSPS KAKENHI
Grant-in-Aid for Scientific Research (B)
Grant Number 25K00917. 
}
\\ {\small Department of Applied Mathematics}\\
{\small Waseda University}\\ 
{\small
3-4-1 Ohkubo, Shinjuku-ku, Tokyo
169-8555,
Japan}\\ \\
Kazuhiro Oeda\footnote{The second author was supported by
JSPS KAKENHI
Grant-in-Aid for Scientific Research (C)
Grant Number 23K03241.
\newline
E-mail addresses:
kuto@waseda.jp (K.\,Kuto),\ \ 
kazuoeda@ip.kyusan-u.ac.jp (K.\,Oeda)
}
\\ {\small Center for Fundamental Education}\\
{\small Kyushu Sangyo University}\\ 
{\small
2-3-1 Matsukadai, Higashi-ku, Fukuoka
813-8503,
Japan}
}
\theoremstyle{plain}
\newtheorem{thm}{Theorem}[section]
\newtheorem{prop}[thm]{Proposition}
\newtheorem{lem}[thm]{Lemma}
\newtheorem{cor}[thm]{Corollary}
\theoremstyle{definition}
\newtheorem{rem}{Remark}[section]
\newcommand{\1}{\mathrm{1}\hspace{-0.25em}\mathrm{l}}
\date{}
\begin{document}
\maketitle 
\begin{abstract}
\noindent
We study a spatial predator–prey model in which prey can enter a protection zone (refuge) inaccessible to predators, while predators exhibit directed movement toward prey-rich regions. The directed movement is modeled by a far-sighted population flux motivated by classical movement rules, in contrast to the more commonly analyzed near-sighted chemotaxis-type mechanisms.
We first establish local-in-time well-posedness for the corresponding nonstationary problem under Neumann boundary conditions, despite the discontinuity induced by the refuge interface. We then investigate the stationary problem, focusing on how the coexistence states emerge and organize globally in parameter space. In particular, we identify the bifurcation threshold for positive steady states from semitrivial predator-only equilibria, and describe the global continuation of the resulting branches. Our analysis reveals that strong directed movement can induce turning-point structures and multiplicity of coexistence steady states, highlighting a nontrivial interplay between spatial protection and predator movement behavior.

\medskip

\noindent
{\it MSC}\ :\ 
35J57,
35J61,
35B32,
92D25,
35B20,
35B09
\par
\noindent
{\it Keywords}\ :\ 
prey-predator model;
protection zone;
directed population flux; 
steady states;
local-in-time solutions;
bifurcation;
Lyapunov-Schmidt reduction.
\end{abstract}

\section{Introduction} 
\hspace{4.5mm} 
In ecological systems, {\it protection zones}--regions into which prey may enter but predators cannot--have long been recognized as an important factor shaping the spatio-temporal dynamics of interacting species.
In mathematical biology, extensive research has been devoted to understanding how protection zones influence population persistence and coexistence.

From the viewpoint of reaction--diffusion models with protection zones (or refuges),
it is important to note that the incorporation of protected regions into spatial
population dynamics predates the later prey--predator literature and goes back at least
to the pioneering works of L{\'o}pez-G{\'o}mez \cite{LG95} and
L{\'o}pez-G{\'o}mez and Sabina de Lis \cite{LGSab}.
In these early contributions, spatially heterogeneous structures with protected patches
were introduced within reaction--diffusion frameworks in a systematic way, and their
effects on persistence/coexistence and long--term dynamics were already analyzed.
This foundational line of research was subsequently developed, unified, and placed in a
broader conceptual setting in the monograph of L{\'o}pez-G{\'o}mez \cite{LG15}, which
revisits the evolution of the protection--zone theory for parabolic equations and systems.
Later, Du and Shi \cite{DS} proposed a specific prey--predator model in which only the prey
are allowed to enter a protection zone, while both species undergo random diffusion.
Within this particular setting, they analyzed the global structure of coexistence steady states and, 
in line with the earlier insights developed in \cite{LG95,LGSab,LG15}, 
showed that enlarging the protection zone can increase the likelihood of prey survival.

Since then, extensive studies have been conducted on Lotka--Volterra-type systems with
protection zones/refuges, providing valuable insights into the structure and
bifurcation behavior of steady states. In particular, various functional responses,
diffusion, and cross-diffusion mechanisms have been incorporated; see, for instance,
\cite{HeZheng2015,HeZheng2017,Oeda2011,Oeda2012,Oeda2017,LiWu2017,LiWuLiu2017,LiYamada2018,LiWu2022}
and references therein.
More recently, diffusion--advection effects have been explored by Ma and Tang
\cite{MaTang2023} and Tang and Chen \cite{TangChen2024}, revealing new qualitative
features absent in purely diffusive models. Protection zones have also been studied in
connection with long-time dynamics, free-boundary problems, and additional ecological
mechanisms, such as strong Allee effects and fear effects; see, for instance,
\cite{JinPengWang2023,SunLei2023,XuZou2024,WangFan2023}. We also refer to the monograph
\cite{LG15} for a systematic account of metasolutions and the role of protected zones
in the dynamics of spatially heterogeneous parabolic equations and systems.
However, much of the existing literature has focused primarily on stationary problems
and arguments based on comparison theorems. Consequently, relatively few studies have
addressed the existence and behavior of time-dependent solutions in cases involving
nonlinear diffusion where comparison theorems are inapplicable, despite their
critical role in ecological dynamics.

\begin{figure}[htbp]
  \centering
  \includegraphics[scale=.3]{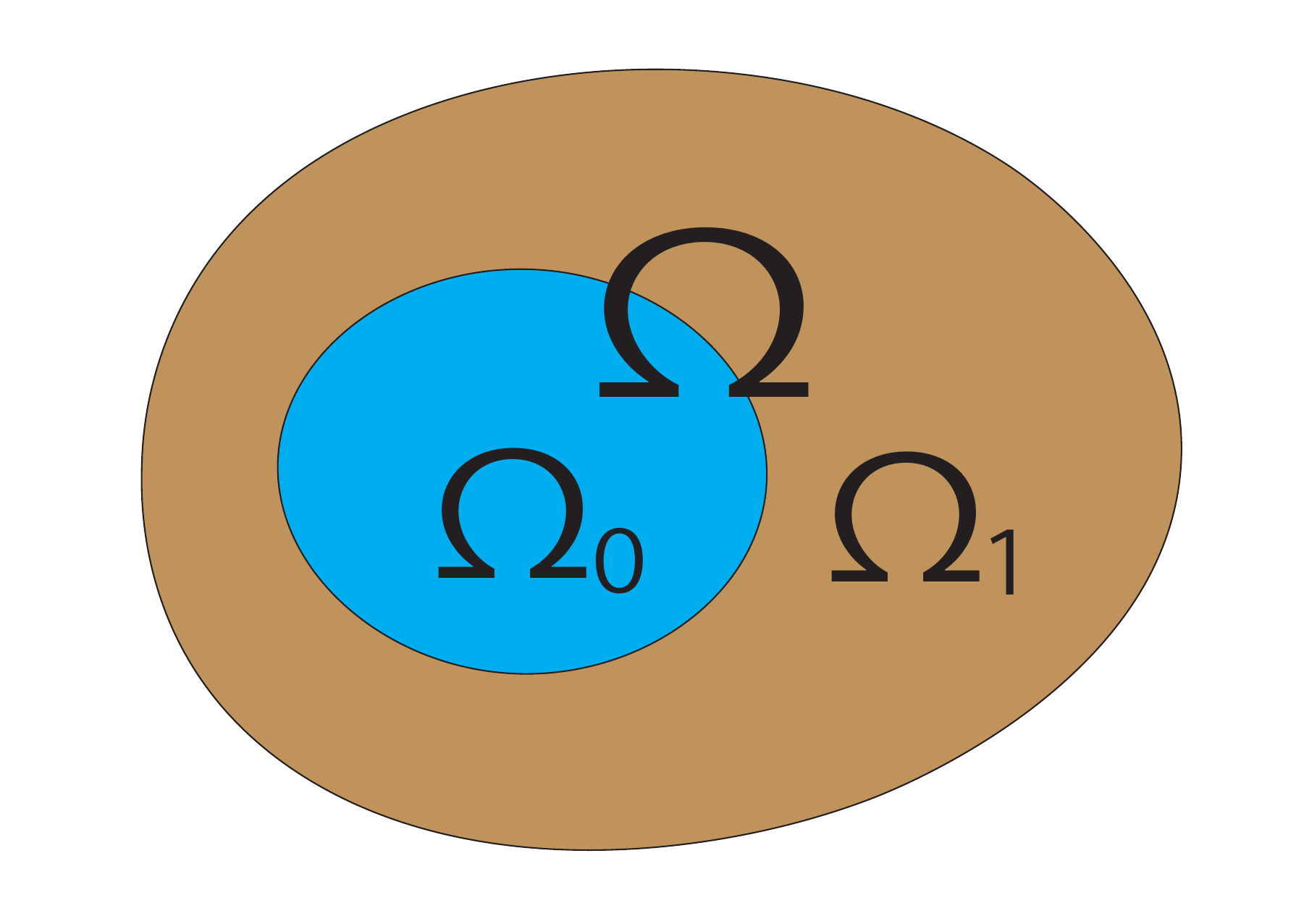}
  \caption{Protection zone $\Omega_{0}$}
  \label{fig1}
\end{figure}

At the same time, directed movement of predators toward prey-rich regions is another important ecological mechanism.
In classical ecological theory, certain far-sighted movement rules, in which movement probabilities depend directly on prey density at a potential destination, are often presented as prototypical descriptions of animal movement (see, e.g., Okubo and Levin \cite{OL}).
However, despite their prominence in the ecological literature, these far-sighted movement rules have received remarkably little mathematical attention within reaction-diffusion frameworks, especially when compared with the extensively studied near-sighted chemotaxis-type mechanisms, where movement responds to prey density at spatial midpoints.
How such far-sighted movement interacts with a protection zone (refuge) therefore remains largely unclear, and the present paper aims to clarify this interaction.

We now introduce the prey-predator model with a protection zone and a
directed movement flux that will be analyzed in this paper.
Let $\Omega\subset\mathbb{R}^{N}$ be a bounded domain (a bounded, connected
open set) whose boundary $\partial\Omega$ is a smooth $(N-1)$-dimensional
manifold.  
The domain $\Omega$ represents the habitat available to the prey population.
From a biological viewpoint the spatial dimension is typically $N=2$ or $N=3$;
however, for the mathematical analysis carried out in this paper,
we assume $N=2$ for the time-dependent problem, whereas for the steady-state 
problem we allow any $N\ge 2$.
A subdomain $\Omega_{0}\subset\Omega$ is designated as a protection zone for prey, which means that the prey can freely move into and out of $\Omega_{0}$,
while predators are excluded from $\Omega_{0}$.
Throughout this paper, 
we assume that $\Omega_{0}$ is a bounded domain with
$\overline{\Omega}_{0}\subset\Omega$, and we define
\[
  \Omega_{1}:=\Omega\setminus\overline{\Omega}_{0},
\]
see Figure \ref{fig1}.
As an additional modeling assumption, we suppose that $\Omega_{1}$ is
connected; in particular, predators are confined to the connected region
$\Omega_{1}$. In this setting, the boundary 
$\partial\Omega_{1}$ consists of two disjoint 
$(N\!-\!1)$-dimensional manifolds;
$\partial\Omega$ and $\partial\Omega_{0}$.

The core mathematical model studied in this paper is the following prey-predator system:
\begin{equation}\label{P}
\begin{cases}
u_{t}=\Delta u+u(\lambda -u-b\1_{\Omega_1}v),\ \
&(x,t)\in\Omega\times (0,T),\vspace{1mm}\\
v_{t}=\Delta v+\alpha \nabla\cdot\biggl[
u^{2}\,\nabla\biggl(\dfrac{v}{u}\biggr)\biggr]
+v(\mu +cu-v),\ \
&(x,t)\in \Omega_{1}\times (0,T),\\
\partial_n u=0,\ \ \ 
&(x,t)\in\partial\Omega\times (0,T),\\
\partial_n v=0,\ \ \ 
&(x,t)\in\partial\Omega_{1}\times (0,T),\\
u(x,0)=u_{0}(x)\ge 0,\ \
&x\in\Omega,\\ 
v(x,0)=v_{0}(x)\ge 0,\ \
&x\in\Omega_{1}.
\end{cases}
\end{equation}  
The unknown functions 
$u(x,t)$ and $v(x,t)$ represent the population densities 
of prey and predator, respectively, 
at location 
$x$ and time $t$.
In the reaction terms of Lotka-Volterra type, 
we assume that $\lambda$ is the intrinsic growth rate of prey 
and is a positive constant. On the other hand, 
the growth rate $\mu$ of predator is assumed to be a real constant 
that may take nonpositive values. 
The term $b\1_{\Omega_1}uv$ represents the decrease in prey due to predation by predator. However, since predators cannot enter 
the protection zone $\Omega_0$, 
we assume that
$b\1_{\Omega_1}$ is a well-type function that 
vanishes in $\overline{\Omega}_0$, where  $b$ is a positive constant  
and
\begin{equation}
\1_{\Omega_1}:=
\begin{cases}
1\quad&\mbox{if}\ x\in\Omega_{1},\\
0\quad&\mbox{if}\ x\in\overline{\Omega}_{0}.
\end{cases}
\end{equation}
On the other hand, the term $cuv$ represents the increase 
in predators due to predation of prey. 
Since the second equation is defined only in the region 
$\Omega_{1}$, 
where predators can reside, 
the assumption that $c$ is a positive constant 
provides a reasonable interpretation. 
That is, the unknown function $u$ is spatially defined in $\Omega$, 
while the second equation itself is defined only in 
$\Omega_{1}$, 
so the function $u$ appearing in the second equation 
is interpreted as being restricted to 
$\Omega_{1}$.
Correspondingly, the function $v$ appearing in the first equation is interpreted as being zero in $\Omega_{0}$ if necessary.
In the Neumann boundary condition for \( u \), given by
$
\partial_{n} u := n \cdot \nabla u = 0$,
the vector \( n \) denotes the outward unit normal vector to \( \partial\Omega \), with \( \Omega \) regarded as the interior. In contrast, in the Neumann boundary condition for \( v \), given by
$
\partial_{n} v := n \cdot \nabla v = 0$ on $\partial\Omega_{1}$,
the vector \( n \) on $\partial\Omega_{1}\cap\partial\Omega$ also denotes the outward unit normal vector, with respect to \( \Omega_{1} \) considered as the interior, and thus coincides with that used in the boundary condition for \( u \). Slight care must be taken regarding the direction of \( n \) on $\partial\Omega_{1}\cap\partial\Omega_{0}$, where \( n \) is defined as the unit normal vector pointing outward from \( \Omega_1 \) into \( \Omega_{0} \), with 
respect to $\Omega_{1}$ considered as the interior.


The linear diffusion terms $\Delta u$ and $\Delta v$ 
represent random diffusion of prey and predator, 
respectively, at comparable rates. It should be noted that we set the random diffusion coefficients to unity for both species to avoid notational complexity; however, the results presented in this paper remain verifiable for general positive coefficients.
The nonlinear diffusion term $\nabla\cdot[u^{2}\nabla (v/u)]$
in the second equation 
models the tendency of predator to move toward regions 
with higher prey density, 
and the coefficient $\alpha$ is 
assumed to be a nonnegative constant.
The chemotaxis term $-\nabla\cdot(v\nabla u)$ is well-known as 
a nonlinear term that describes the pursuit behavior 
of predators tracking prey. However, the term 
$\nabla\cdot [u^2\nabla (v/u)]$ 
is also a typical modeling approach from 
the perspective of directed population fluxes. 
According to \cite{OL}, 
which is frequently cited in the context of modeling 
diffusion processes in biological species, 
the nonlinear diffusion term $\nabla\cdot [u^2\nabla (v/u)]$ 
naturally arises when considering a discrete patch model 
in which predators at each site move to neighboring sites 
during short time intervals, with a higher probability of moving toward 
sites where prey density is higher. 
Through a continuum limit, this leads to the emergence of 
the nonlinear diffusion term $\nabla\cdot [u^2\nabla (v/u)]$. 
In other words, this diffusion form naturally appears 
when individuals move based on the prey population density 
at the destination site.
On the other hand, if movement is assumed to be based on the prey density at the midpoint between the origin and destination sites, then the chemotaxis 
term $-\nabla\cdot(v\nabla u)$ arises. 
In this sense, both nonlinear diffusion forms 
are considered prototypical from the viewpoint of biological 
diffusion processes, and indeed, 
they are presented in parallel in \cite{OL}.
However, compared to the extensive studies of various biological 
models involving chemotaxis terms, the nonlinear diffusion term
$\nabla\cdot [u^2\nabla (v/u)]$ involving 
population flux by attractive transition
has been less explored from the perspective of 
partial differential equations.

The purpose of this paper is to establish the local-in-time 
solvability of \eqref{P}, 
and to further study the existence and 
global bifurcation structure of coexistence steady states.

In constructing nonstationary solutions, 
the analysis becomes significantly more difficult than 
in conventional semilinear prey-predator systems due to 
the presense of the protection zone $\Omega_{0}$ and
the nonlinear diffusion term $\nabla\cdot[u^{2}\nabla (v/u)]$. 
In the two dimensional case where $N=2$, 
this paper will apply the theory of nonlinear evolution equations, 
such as that developed by Yagi \cite{Ya}, 
to construct local-in-time solutions in appropriate 
function spaces. 

For the stationary problem, we first identify and plot the pairs of parameters
\((\lambda,\mu)\) for which coexistence steady states bifurcate from semi-trivial
solutions, where one of the species (prey or predator) is extinct.
We then investigate the qualitative behavior of the branches of coexistence
steady states bifurcating from such semi-trivial solutions (see Figure~2).
Furthermore, we clarify the global structure of these bifurcation branches by
analyzing their asymptotic behavior in the limit where the directed population
flux $\alpha$ becomes infinitely strong.
This is achieved by deriving and studying the global bifurcation structure of
the solution set of the corresponding limiting system.
In particular, we show that the combined effects of the protection zone
\(\Omega_{0}\) and the directed population flux can reverse the direction of
bifurcation and lead to the emergence of coexistence branches with turning
points (saddle--node bifurcations), see Figure 4.
From a biological perspective, this result suggests that increasing the strength
of directed predator movement, which might naively be expected to benefit the
predator, can instead allow the prey to persist at lower intrinsic growth
rates, at least from the viewpoint of stationary solutions.

The structure of this paper is as follows. 
In Section 2, we investigate the existence of time-dependent 
solutions to \eqref{P}. 
In Section 3, we present results on the bifurcation structure 
of steady-state solutions and their asymptotic behavior as 
$\alpha$ becomes large. 
In Section~4, we investigate the bifurcation of coexistence steady states
from semi-trivial solutions and analyze the behavior of the resulting
global bifurcation branches.
Section~5 is devoted to the asymptotic behavior of coexistence steady states
$(u(x,\alpha), v(x,\alpha))$ as the directed movement strength $\alpha$
tends to infinity.
More precisely, we show that as $\alpha\to\infty$, along suitable subsequences,
coexistence steady states exhibit one of two possible limiting behaviors:
either they converge to a prey-only state, or the rescaled solutions
$(\alpha u(x,\alpha),\, v(x,\alpha))$ converge to a positive function pair
$(w(x), v(x))$.
In the latter case, we derive a limiting nonlinear elliptic system
satisfied by $(w,v)$.
In Section~6, we study the global bifurcation structure of positive solutions
$(w,v)$ to this limiting system.
In particular, near the blow-up point $\lambda=0$ of the $w$-component,
we provide a detailed description of the bifurcation branch,
taking $\lambda$ as the bifurcation parameter.

\section{Local-in-time solvability}
Throughout this section, devoted to the existence of nonstationary 
solutions, we assume that the spatial dimension is two $(N=2)$ 
unless otherwise stated.
We begin by observing that, in \eqref{P},
 nonlinear diffusion can alternatively be expressed in the following form. 
\begin{equation}\label{exps}
\nabla\cdot\biggl[
u^{2}\,\nabla\biggl(\dfrac{v}{u}\biggr)\biggr]
=\nabla\cdot (u\nabla v-v\nabla u)
=u\Delta v-v\Delta u.
\end{equation}
Throughout this paper, we shall employ these expressions interchangeably, depending on the context of the argument.

When discussing the existence of solutions to \eqref{P} 
from the perspective of quasilinear abstract evolution equations, 
the central expression of \eqref{exps} is relatively 
more suitable, and \eqref{P} can be reformulated as follows.
\begin{equation}\label{P2}
\begin{cases}
u_{t}=\Delta u+u(\lambda -u-b\1_{\Omega_1}v),\ 
&(x,t)\in\Omega\times (0,T),\\
v_{t}=\nabla\cdot \{\,(1+\alpha u)\nabla v\,\}-
\alpha\nabla\cdot ( v\nabla  u)
+v(\mu +cu-v),\ 
&(x,t)\in \Omega_{1}\times (0,T),\\
\partial_n u=0,\ 
&(x,t)\in\partial\Omega\times (0,T),\\
\partial_n v=0,\ 
&(x,t)\in\partial\Omega_{1} \times (0,T),\\
u(x,0)=u_{0}(x)\ge 0,\ 
&x\in\Omega,\\ 
v(x,0)=v_{0}(x)\ge 0,\ 
&x\in\Omega_{1}.
\end{cases}
\end{equation}  
There are relatively few studies on time-dependent problems for Lotka–Volterra systems with nonlinear terms of the form 
\eqref{exps}.
In the case where the protected region $\Omega_{0}$ is empty and $\Omega = \Omega_{1}$ in \eqref{P2}, 
Haihoff and Yokota \cite{HY22} established the existence and uniqueness of global-in-time solutions and their asymptotic behavior as $t \to \infty$ for the one or two-dimensional case where $N\le 2$.
Furthermore, they succeeded in constructing global-in-time weak solutions even in the case where $N=3$
Regarding a weak symbiotic systems with a couple of the same nonlinear diffusion terms, 
Kato and Kuto \cite{KK25} have obtained the existence and uniqueness of global-in-time solutions 
in case where $N\le 3$
as well as the asymptotic behavior of solutions as $t \to \infty$.
However, in the case of \eqref{P2} with a protection zone $\Omega_{0}$, the regularity of solutions and a priori estimates differ fundamentally from those in the case without a protected region. Consequently, the discussion of solvability becomes considerably more difficult.

In this subsection, we regard \eqref{P2} 
as a quasilinear evolution equation in a suitable function space, 
and construct a local-in-time solution based on the theory 
of quasilinear evolution equations developed, for instance, 
by Yagi \cite{Ya}.
In rewriting \eqref{P2} as a quasilinear evolution equation, 
we employ 
$$\mathbb{L}^{2}:=L^{2}(\Omega )\times 
L^{2}(\Omega_{1})
$$
as the underlying functional space.
In the following discussion, $\Delta_{\Omega}$ denotes the Laplace
operator with domain
\[
H^{2}_{n}(\Omega) := \left\{\, u \in H^{2}(\Omega) \;:\; \partial_{n} u = 0 \ \ \text{on} \ \partial\Omega \,\right\}.
\]
Hence,
$\Delta_{\Omega}$ with domain $D(\Delta_{\Omega})=H^{2}_{n}(\Omega )$
is a densely defined closed operator in $L^{2}(\Omega)$.
Similarly, $\Delta_{\Omega_{1}}$ is defined by
the Laplace operator with domain
\[
H^{2}_{n}(\Omega_{1}) 
:= \left\{\, u \in H^{2}(\Omega_{1}) \;:\; \partial_{n} u = 0 \ \ \text{on} \ \partial\Omega_{1} \,\right\}.
\]
Here we assume that the initial data 
$\boldsymbol{U}_{0}:=(u_{0}, v_{0})$ satisfies
$u_{0}\ge 0$ in $\Omega$ and $v_{0}\ge 0$ in $\Omega_{1}$ and
$$
\boldsymbol{U}_{0}\in H^{1+\varepsilon}(\Omega)\times 
H^{1+\varepsilon}(\Omega_{1})=:\mathbb{H}^{1+\varepsilon}$$
for some $\varepsilon\in (0,1/2)$.
For each $\boldsymbol{U}\in\mathbb{H}^{1+\varepsilon_{1}}$,
where $0<\varepsilon_{1}<\varepsilon$, 
we define the linear operator $A(\boldsymbol{U})$ of $\mathbb{L}^{2}$
with domain
$$
D(A(\boldsymbol{U}))=
H^{2}_{n}(\Omega )\times H^{2}_{n}(\Omega_{1})=:\mathbb{H}^{2}_{n}$$
by
\begin{equation}\label{Adef}
A(\boldsymbol{U}):=
\biggl[
\begin{array}{ll} 
A_{11} & 0\\
A_{21}(v) & A_{22}(u)
\end{array}
\biggr],
\end{equation}
where
$$
\begin{cases}
A_{11}\phi:=-\Delta \phi+\phi\ &\mbox{with domain}\ 
D(A_{11})=H^{2}_{n}(\Omega ), \\
A_{21}(v)\phi:=\alpha\nabla\cdot (v\nabla\phi)\ &\mbox{with domain}\  
D(A_{21}(v))=H^{2}_{n}(\Omega ), \\
A_{22}(u)\psi:=-\nabla\cdot\big((1+\alpha\chi(\operatorname{Re} u))\nabla\psi\big)+\psi &\mbox{with domain}\ 
D(A_{22}(u))=H^{2}_{n}(\Omega_{1})
\end{cases}
$$
for $\boldsymbol{U}=(u,v)
\in\mathbb{H}^{1+\varepsilon_{1}}
$.
Here $\chi (s)$ is the cut function satisfying
$\chi (s)=s$ if $s\ge 0$,
$\chi (s)=-\delta $ if $s\le -\delta$ and
$\chi (s)$ is non-decreasing and smooth for any $s\in\mathbb{R}$
with some $\delta\in (0,1/(2\alpha))$.
It is noted that the solvability of \eqref{P2} with $\alpha=0$
can be carried out based on the semilinear parabolic theory.
Furthermore we introduce
the nonlinear map
$F(\boldsymbol{U} )\in\mathbb{L}^{2}$ by
\begin{equation}\label{Fdef}
F(\boldsymbol{U})=\biggl[
\begin{array}{l}
u(\lambda +1-u-b\1_{\Omega_1}v)\\
v(\mu +1+cu-v)
\end{array}
\biggr]
\end{equation}
for $\boldsymbol{U}=(u,v)\in\mathbb{H}^{1+\varepsilon_{1}}$.
Consequently, 
\eqref{P2} reduces to the following quasilinear abstract
evolution equation
in $\mathbb{L}^{2}$:
\begin{equation}\label{evo}
\begin{cases}
\dfrac{d\,\boldsymbol{U}}{dt}+A(\boldsymbol{U})\boldsymbol{U}=
F(\boldsymbol{U}),\qquad t\in (0,T),\\
\boldsymbol{U}(0)=\boldsymbol{U}_{0}.
\end{cases}
\end{equation}
In the definition of $A_{22}(u)$ we have introduced the cut-off
\[
A_{22}(u)\psi=-\nabla\cdot\big((1+\alpha\chi(\operatorname{Re} u))\nabla\psi\big)+\psi
\]
rather than simply using $1+\alpha u$. The reason is that in the abstract framework of
Yagi~\cite{Ya} 
the evolution equation is studied in the complexified Banach spaces,
so that the unknown $u$ has to be regarded as complex-valued. If one directly inserted $1+\alpha u$,
the leading coefficient could take complex values and 
the ellipticity of the operator would be lost.
By taking $\operatorname{Re} u$ and applying the cut-off $\chi$, 
the coefficient is always real and
uniformly bounded from below by a positive constant, 
which guarantees the normal ellipticity and
sectoriality required in (A1)-(A2) below. In the biologically relevant situation $u\ge0$, we simply have
$\chi(\operatorname{Re} u)=u$, hence the cut-off does not change the original system.
In addition,
it is worth noting that only $A_{22}(u)$ requires this modification.
For $A_{11}$ the coefficients are constant, so ellipticity is never lost even for complex-valued
functions. The block $A_{21}(v)$ is an off-diagonal term of lower order, 
and thus its coefficients
may take complex values without affecting ellipticity. 
In contrast, the principal part of $A_{22}(u)$
depends directly on $u$ and would lose ellipticity if $u$ were allowed to be complex.
Therefore, the cut-off with $\chi(\operatorname{Re}u)$ 
is introduced exclusively in $A_{22}$.

In order to construct a local-in-time solution of \eqref{evo}, 
we check the applicability conditions of the
existence theorem by Yagi
\cite[Theorem~5.5]{Ya}.  
As structural assumptions for $A$, we list the conditions given in 
\cite[p.~202]{Ya}:
\begin{itemize}
  \item[(A1)] there exists $\omega\in (0,\pi/2)$ such that
the spectrum of $A(\boldsymbol{U})$ is contained in
$\Sigma_{\omega} := \{\, z \in \mathbb{C} \,;\, |\arg z| < \omega \,\}$ for any $\boldsymbol{U} \in K_{R}:=\{\,\boldsymbol{U}\in 
\mathbb{H}^{1+\varepsilon_{1}}
\,:\,
\|\boldsymbol{U}\|_{\mathbb{H}^{1+\varepsilon_{1}}}<R\,\}$;  
  \hspace{1.5em} 
  \item[(A2)] there exists $M_{R} \ge 1$ such that
  \[
    \|(z - A(\boldsymbol{U}))^{-1}\|_{\mathcal{L}(\mathbb{L}^2)} \le \frac{M_{R}}{|z|} \quad \text{for all } z \notin \Sigma_{\omega},\ 
\boldsymbol{U} \in K_{R};
  \]
  \item[(A3)] there exists an exponent $\nu\in (0,1]$ such that
$D(A(\boldsymbol{U}_{1}))\subset D(A(\boldsymbol{U}_{2})^{\nu})$ for any pair 
$\boldsymbol{U}_{1}$, $\boldsymbol{U}_{2}\in K_{R}$;
\item[(A4)] there exists $C=C_{R}>0$ such that
$$\|A(\boldsymbol{U}_{1})^{\nu}
[A(\boldsymbol{U}_{1})^{-1}-
A(\boldsymbol{U}_{2})^{-1}]\|_{\mathcal{L}(\mathbb{L}^{2})}\le C_{R}
\|\boldsymbol{U}_{1}-\boldsymbol{U}_{2}\|_{\mathcal{Y}}$$ 
for all $\boldsymbol{U}_{1}$, 
$\boldsymbol{U}_{2}\in K_{R}$ with some Banach space
$\mathbb{H}^{1+\varepsilon_{1}}\subset\mathcal{Y}\subset\mathbb{L}^{2}$ 
with continuous embeddings;
\item[(A5)] there are two exponents $0\le \beta_{0}<\beta_{1}<1$ such that,
for any $\boldsymbol{U}\in K_{R}$, $D(A(\boldsymbol{U})^{\beta_{0} })\subset \mathcal{Y}$ 
and
$D(A(\boldsymbol{U})^{\beta_{1} })\subset\mathbb{H}^{1+\varepsilon_{1} }$ with 
the estimates
$$
\begin{cases}
\|\boldsymbol{\varPhi}\|_{\mathcal{Y}}\le C_{1}\|A(\boldsymbol{U})^{\beta_{0}}\boldsymbol{\varPhi}\|_{\mathbb{L}^{2}}\quad&\mbox{for all}\ \ \boldsymbol{\varPhi}\in D(A(\boldsymbol{U})^{\beta_{0} }),\ \ 
\boldsymbol{U}\in K_{R},\\
\|\boldsymbol{\varPhi}\|_{\mathbb{H}^{1+\varepsilon_{1}}}\le C_{2}
\|A(\boldsymbol{U})^{\beta_{1}}\boldsymbol{\varPhi}\|_{\mathbb{L}^{2}}\quad
&\mbox{for all}\ \ \boldsymbol{\varPhi}\in D(A(\boldsymbol{U})^{\beta_{1} }),\ \ 
\boldsymbol{U}\in K_{R},
\end{cases}
$$
where $C_{i}>0$
$(i=1,2)$ being some constants;
\item[(A6)]
the exponents satisfy the relations
$$
0\le\beta_{0}<\beta_{1}<\nu\le 1\quad\mbox{and}\quad
1+\beta_{0} <\beta_{1}+\nu .
$$
\end{itemize}
The conditions (A1)-(A6) are the structural hypotheses in Yagi's abstract theory
ensuring the applicability of the analytic semigroup approach to quasilinear
parabolic problems. In particular, (A1) and (A2) guarantee that 
$-A(\boldsymbol{U})$
generates an analytic semigroup uniformly in $\boldsymbol{U}$, 
which is the basis for the time-local well-posedness. 
Condition (A3) provides a nesting property of domains,
allowing the operator family $\{A(\boldsymbol{U})\}$ to be treated consistently as
$\boldsymbol{U}$ varies. 
Condition (A4) requires Lipschitz continuity of $A(\boldsymbol{U})$
with respect to $\boldsymbol{U}$, which is essential for applying the contraction mapping
principle in the nonlinear setting. 
Finally, (A5) and (A6) concern fractional powers
of $A(\boldsymbol{U})$ and their interpolation properties; these connect the initial data
spaces with the time regularity of solutions and are indispensable for obtaining
solutions in the desired function class.

We now verify that these conditions are satisfied for our problem.
By a standard argument, one can check that the operator 
$A(\boldsymbol{U})$ is sectorial and satisfies (A1) and (A2); see, for instance, 
the discussion in \cite[p.~244]{Ya}.
Moreover, (A3) is evidently fulfilled by taking $\nu = 1$.
To verify (A4) with $\nu=1$, note that
\[
A(\boldsymbol{U}_{1})\big[A(\boldsymbol{U}_{1})^{-1}-A(\boldsymbol{U}_{2})^{-1}\big]
= -\big(A(\boldsymbol{U}_{1})-A(\boldsymbol{U}_{2})\big)A(\boldsymbol{U}_{2})^{-1},
\]
which shows that (A4) is equivalent to the estimate
\begin{equation}\label{A4}
\|\,[A(\boldsymbol{U}_{1})
-A(\boldsymbol{U}_{2})]\,\boldsymbol{\varPhi}\,\|_{\mathbb{L}^{2}}\le
\widetilde{C}_{R}\|\boldsymbol{U}_{1}-\boldsymbol{U}_{2}\|_{\mathcal{Y}}
\|\boldsymbol{\varPhi}\|_{\mathbb{H}^{2}}
\end{equation}
for all $\boldsymbol{U}_{1}$, 
$\boldsymbol{U}_{2}\in K_{R}$ and $\boldsymbol{\varPhi}\in\mathbb{H}^{2}$.
As in \cite[Section~8.3]{Ya}, one can verify that  
the operator $A$, defined by \eqref{Adef}, satisfies \eqref{A4}  
for $\mathcal{Y} = \mathbb{H}^{1+\varepsilon_0}$ 
with $0 < \varepsilon_0 < \varepsilon_{1}$. 
Taking into account the correspondence 
between the domain of the fractional power of 
$A(\boldsymbol{U})$ and the interpolation spaces,
\begin{equation}\label{inter}
D(A(\boldsymbol{U})^{\theta})=
\begin{cases}
\mathbb{H}^{2\theta}, & 0<\theta<3/4,\\
\mathbb{H}^{2\theta}_{n}, & 3/4<\theta\le 1,
\end{cases}
\end{equation}
valid for all $\boldsymbol{U}\in\mathbb{H}^{1+\varepsilon_{1}}$
(see \cite[Section~8.2]{Ya}),
we deduce that (A5) holds with 
$\beta_{0} = (1+\varepsilon_{0})/2$ and $\beta_{1} = (1+\varepsilon_{1})/2$.
Consequently, (A6) is also satisfied.

Next, we state the conditions for the nonlinear mapping 
$F$, as extracted from \cite[p.~219]{Ya}:

\begin{itemize}
\item[(F1)] There exist a continuous increasing function
$\varphi(\cdot)$ and a Banach space $\mathcal{W}$
with continuous embedding $\mathcal{W}\subset\mathbb{H}^{1+\varepsilon_{1}}$
such that
\begin{equation}
\begin{split}
&\|F(\boldsymbol{U}_{1})-F(\boldsymbol{U}_{2})\|_{\mathbb{L}^{2}} \\
&\quad \le
\varphi\bigl(\|\boldsymbol{U}_{1}\|_{\mathbb{H}^{1+\varepsilon_{1}}}
 + \|\boldsymbol{U}_{2}\|_{\mathbb{H}^{1+\varepsilon_{1}}}\bigr) \\
&\qquad\times
\Bigl(
   \|\boldsymbol{U}_{1}-\boldsymbol{U}_{2}\|_{\mathcal{W}}
   +(\|\boldsymbol{U}_{1}\|_{\mathcal{W}}
    +\|\boldsymbol{U}_{2}\|_{\mathcal{W}})
    \|\boldsymbol{U}_{1}-\boldsymbol{U}_{2}\|_{\mathbb{H}^{1+\varepsilon_{1}}}
 \Bigr)
\end{split}
\nonumber
\end{equation}
for all $\boldsymbol{U}_{1},\boldsymbol{U}_{2}\in\mathcal{W}$.

\item[(F2)] There exists $\beta_{1}<\eta<1$ such that,
for any $\boldsymbol{U}\in K_{R}$, 
$D(A(\boldsymbol{U})^{\eta})\subset \mathcal{W}$ with the estimate
\[
\|\boldsymbol{\varPhi}\|_{\mathcal{W}}\le C_{3}\|
A(\boldsymbol{U})^{\eta}\boldsymbol{\varPhi}\|_{\mathbb{L}^{2}},
\quad\text{for all }\boldsymbol{\varPhi}\in D(A(\boldsymbol{U})^{\eta}),
\]
where $C_{3}>0$ is a constant.
\end{itemize}

The conditions (F1) and (F2) govern the nonlinear mapping $F(\boldsymbol{U})$.
Assumption (F1) provides a local Lipschitz estimate of $F$, which is indispensable
to guarantee uniqueness of solutions via a fixed point argument.
Assumption (F2) ensures that the auxiliary space $\mathcal{W}$ used in (F1)
is compatible with the fractional domain of $A(\boldsymbol{U})$,
so that Sobolev embeddings and interpolation theory can be applied
to control the nonlinear terms. Together, (F1) and (F2) provide
the necessary framework to handle nonlinearities within the analytic
semigroup setting.

In the case $N=2$, since $\mathbb{H}^{s}$ is continuously embedded into 
$C(\overline{\Omega})\times C(\overline{\Omega}_{1})$ for any $s>1$, 
one can verify that the mapping $\boldsymbol{F}$ defined in \eqref{Fdef} 
satisfies (F1) with $\mathcal{W}=\mathbb{H}^{1+\varepsilon_{2}}$
and $\varphi(\xi)=C(1+\xi)$, where $\varepsilon<\varepsilon_{2}<1/2$
and $C>0$ is a constant.
Condition (F2) also follows, since \eqref{inter} implies
$D(A(\boldsymbol{U})^{\eta})=\mathcal{W}$ with 
$\eta=(1+\varepsilon_{2})/2$ for any $\boldsymbol{U}\in K_{R}$.
In addition, the initial value $\boldsymbol{U}_{0}$ satisfies
\[
\boldsymbol{U}_{0}\in D(A(\boldsymbol{U}_{0})^{\gamma}),
\quad \beta_{1}<\gamma\le 1,
\]
with $\varGamma=(1+\varepsilon)/2$.

Consequently, we may apply \cite[Theorems~5.5 and 5.6]{Ya}
to deduce the unique existence of local-in-time solutions to \eqref{evo}.
\begin{theorem}\label{localthm}
Assume $N=2$.
Under the assumptions \textup{(A1)-(A6)}, \textup{(F1)} and \textup{(F2)},
there exists a unique local-in-time solution $\boldsymbol{U}$ of \eqref{evo}
on some interval $[0,T]$, with
\[
\boldsymbol{U}\in
C([0,T]; \mathbb{H}^{1+\varepsilon_{1}})
\cap C^{\gamma-\beta_{0}}([0,T]; \mathbb{H}^{1+\varepsilon_{0}})
\cap C^{1}((0,T];\mathbb{L}^{2}),
\]
and
\[
A(\boldsymbol{U})^{\gamma}\boldsymbol{U}\in C([0,T]; \mathbb{L}^{2}),
\]
together with
\[
\dfrac{d\boldsymbol{U}}{dt},\
A(\boldsymbol{U})\boldsymbol{U}\in\mathcal{F}^{\gamma,\sigma}
((0,T]; \mathbb{L}^{2}), \quad \text{for any }\sigma\in (0,1).
\]
Here the existence time $T>0$ depends only on
$\|A(\boldsymbol{U}_{0})^{\gamma}\boldsymbol{U}_{0}\|_{\mathbb{L}^{2}}$.
Moreover, the solution $\boldsymbol{U}$ satisfies the estimate
\[
\sup_{0\le t \le T}\|A(\boldsymbol{U}(t))^{\gamma}\boldsymbol{U}(t)\|_{\mathbb{L}^{2}}
+ \Bigl\|\dfrac{d\boldsymbol{U}}{dt}\Bigr\|_{\mathcal{F}^{\gamma, \sigma}}
+ \|A(\boldsymbol{U})\boldsymbol{U}\|_{\mathcal{F}^{\gamma, \sigma}}
\le C
\]
for some $C>0$ depending on
$\|A(\boldsymbol{U}_{0})^{\gamma}\boldsymbol{U}_{0}\|_{\mathbb{L}^{2}}$.
\end{theorem}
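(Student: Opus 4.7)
My plan is to realise \eqref{P2} as the abstract quasilinear Cauchy problem \eqref{evo} in $\mathbb{L}^{2}$, verify the structural hypotheses (A1)-(A6) on $\{A(\boldsymbol{U})\}$ together with (F1)-(F2) on $F$, and then invoke Yagi's existence and regularity theorems \cite[Theorems 5.5, 5.6]{Ya}. I would fix the $\varepsilon\in(0,1/2)$ from the initial data and choose intermediate exponents $0<\varepsilon_{0}<\varepsilon_{1}<\varepsilon<\varepsilon_{2}<1/2$, taking $\mathcal{Y}=\mathbb{H}^{1+\varepsilon_{0}}$ and $\mathcal{W}=\mathbb{H}^{1+\varepsilon_{2}}$; the Yagi exponents are then $\beta_{0}=(1+\varepsilon_{0})/2$, $\beta_{1}=(1+\varepsilon_{1})/2$, $\nu=1$, $\eta=(1+\varepsilon_{2})/2$, $\gamma=(1+\varepsilon)/2$, and the inequalities in (A6) together with $\beta_{1}<\eta<\gamma\le 1$ are immediate from the chain $\varepsilon_{0}<\varepsilon_{1}<\varepsilon<\varepsilon_{2}<1$.

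For (A1)-(A2) I would exploit the block-triangular form of \eqref{Adef}: the diagonal entries $A_{11}$ and $A_{22}(u)$ are uniformly elliptic second-order operators on $\Omega$ and $\Omega_{1}$ with Neumann boundary conditions, and the cut-off $\chi(\operatorname{Re}u)$ keeps the leading coefficient $1+\alpha\chi(\operatorname{Re}u)$ real and uniformly bounded between $1/2$ and a constant depending on $R$, so the diagonal blocks are sectorial in a common sector $\Sigma_{\omega}$; the lower-order off-diagonal term $A_{21}(v)$ does not destroy sectoriality, and the resolvent bound (A2) follows uniformly on $K_{R}$. Condition (A3) is trivial since $D(A(\boldsymbol{U}))=\mathbb{H}^{2}_{n}$ is independent of $\boldsymbol{U}$, and (A5) follows from the interpolation identity \eqref{inter}. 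The crux is (A4), which by the displayed identity $A(\boldsymbol{U}_{1})[A(\boldsymbol{U}_{1})^{-1}-A(\boldsymbol{U}_{2})^{-1}]=-(A(\boldsymbol{U}_{1})-A(\boldsymbol{U}_{2}))A(\boldsymbol{U}_{2})^{-1}$ is equivalent to \eqref{A4}. Expanding $[A(\boldsymbol{U}_{1})-A(\boldsymbol{U}_{2})]\boldsymbol{\varPhi}$ componentwise produces $\alpha\nabla\cdot((v_{1}-v_{2})\nabla\phi)$ from $A_{21}$ and $-\alpha\nabla\cdot((\chi(\operatorname{Re}u_{1})-\chi(\operatorname{Re}u_{2}))\nabla\psi)$ from $A_{22}$; since $N=2$, the embedding $\mathbb{H}^{1+\varepsilon_{0}}\hookrightarrow L^{\infty}$ and the Lipschitz property of $\chi$ bound the coefficient differences in $L^{\infty}$ by $\|\boldsymbol{U}_{1}-\boldsymbol{U}_{2}\|_{\mathcal{Y}}$, while a product-rule expansion distributes the remaining derivatives onto $\boldsymbol{\varPhi}\in\mathbb{H}^{2}$. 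This is the step I expect to be the main obstacle, both because the refuge interface $\partial\Omega_{0}$ forces the estimate on $\Omega_{1}$ to be performed independently of the one on $\Omega$ without producing spurious interface traces, and because the off-diagonal term loses one derivative on $\phi$; I would follow the pattern in \cite[Section 8.3]{Ya}.

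Conditions (F1)-(F2) are then much easier: the polynomial nonlinearities in \eqref{Fdef} factor through products of two terms, and in $N=2$ the embedding $\mathbb{H}^{1+\varepsilon_{1}}\hookrightarrow C(\overline{\Omega})\times C(\overline{\Omega}_{1})$ yields a pointwise bound on $F(\boldsymbol{U}_{1})-F(\boldsymbol{U}_{2})$ of precisely the bilinear form required in (F1), with $\varphi(\xi)=C(1+\xi)$, while (F2) is immediate from \eqref{inter} with $\eta=(1+\varepsilon_{2})/2$. Since $\boldsymbol{U}_{0}\in\mathbb{H}^{1+\varepsilon}=D(A(\boldsymbol{U}_{0})^{\gamma})$ with $\beta_{1}<\gamma\le 1$, all hypotheses of \cite[Theorems 5.5, 5.6]{Ya} are met, and the asserted regularity class, the continuity of $A(\boldsymbol{U})^{\gamma}\boldsymbol{U}$, the $\mathcal{F}^{\gamma,\sigma}$ estimates on $d\boldsymbol{U}/dt$ and $A(\boldsymbol{U})\boldsymbol{U}$, and the dependence of $T$ on $\|A(\boldsymbol{U}_{0})^{\gamma}\boldsymbol{U}_{0}\|_{\mathbb{L}^{2}}$ follow at once.
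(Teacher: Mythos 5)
Your proposal follows essentially the same route as the paper: rewrite \eqref{P2} as the abstract quasilinear Cauchy problem \eqref{evo}, verify (A1)--(A6) and (F1)--(F2) with the choices $\mathcal{Y}=\mathbb{H}^{1+\varepsilon_{0}}$, $\mathcal{W}=\mathbb{H}^{1+\varepsilon_{2}}$, $\nu=1$, $\beta_{0}=(1+\varepsilon_{0})/2$, $\beta_{1}=(1+\varepsilon_{1})/2$, $\eta=(1+\varepsilon_{2})/2$, $\gamma=(1+\varepsilon)/2$, and then cite Yagi's Theorems~5.5 and 5.6. You in fact give somewhat more detail than the paper on (A4), which the paper handles by pointing to \cite[Section~8.3]{Ya}.

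One minor slip: you assert $\beta_{1}<\eta<\gamma\le 1$, but with your own ordering $\varepsilon_{1}<\varepsilon<\varepsilon_{2}$ one has $\gamma=(1+\varepsilon)/2<(1+\varepsilon_{2})/2=\eta$, so the correct chain is $\beta_{1}<\gamma<\eta<1$. Yagi's hypotheses require $\beta_{1}<\eta<1$ and $\beta_{1}<\gamma\le 1$ separately, not $\eta<\gamma$, so the conclusion stands; you should just fix the stated inequality. Also, in the product-rule expansion for (A4), the cross term $\nabla(v_{1}-v_{2})\cdot\nabla\phi$ places a derivative on the coefficient difference rather than on $\boldsymbol{\varPhi}$, and this term must be estimated using the fractional Sobolev embeddings $H^{\varepsilon_{0}}\hookrightarrow L^{2/(1-\varepsilon_{0})}$ and $H^{1}\hookrightarrow L^{2/\varepsilon_{0}}$ in $N=2$, not via the $L^{\infty}$ bound on the difference alone; a phrase such as ``distributes the remaining derivatives onto $\boldsymbol{\varPhi}$'' is slightly misleading, though the estimate goes through.
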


In Theorem~\ref{localthm}, 
the norm in the space $\mathcal{F}^{\gamma,\sigma}((0,T]; \mathbb{L}^{2})$
is defined by
\begin{equation}
\|\boldsymbol{V}\|_{\mathcal{F}^{\gamma,\sigma}}
=\sup_{0 < t \leq T} t^{1 - \gamma} \|\boldsymbol{V}(t)\|_{\mathbb{L}^{2}}
+ \sup_{0 < s < t \leq T}
\frac{s^{1 - \gamma + \sigma}\,
\|\boldsymbol{V}(t) - \boldsymbol{V}(s)\|_{\mathbb{L}^{2}}}{(t - s)^{\sigma}}.
\nonumber
\end{equation}
Although the regularity class in Theorem~2.1 is stated in terms of
interpolation spaces and fractional powers of $A(\boldsymbol{U})$,
in our present setting these spaces coincide with the standard Sobolev spaces
introduced above. In particular, since $D(A(\boldsymbol{U}))=H^2_n(\Omega)\times
H^2_n(\Omega_1)$ and the interpolation identities \eqref{inter} hold,
the solution class given in Theorem~2.1 is equivalent to the one stated below.

\begin{cor}
\label{cor:local}
Suppose that $N=2$ and $\varepsilon\in(0,1/2)$.
For any nonnegative
$
\boldsymbol{U}_0=(u_0,v_0)\in \mathbb{H}^{1+\varepsilon},
$
there exists 
$
T=T\big(\|\boldsymbol{U}_0\|_{\mathbb{H}^{1+\varepsilon}}\big)>0
>0$
such that \eqref{P2}
(equivalently \eqref{P})
admits a unique local-in-time solution
\[
\boldsymbol{U}=(u,v)\in C([0,T];\mathbb{L}^2)
\cap C((0,T];\mathbb{H}^2_n),
\]
with $\boldsymbol{U}_t\in C((0,T];\mathbb{L}^2)$.
\end{cor}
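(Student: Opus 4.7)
The plan is to deduce Corollary~\ref{cor:local} from Theorem~\ref{localthm} in two steps: (i) rewrite the fractional-power regularity in terms of standard Sobolev spaces via \eqref{inter}, and (ii) verify that the abstract solution is pointwise nonnegative, so that the cut-off $\chi(\operatorname{Re} u)$ may be replaced by $u$ itself and the solution of \eqref{evo} is identified with a solution of \eqref{P2} (and hence of \eqref{P}, via \eqref{exps}).

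First I would set $\gamma=(1+\varepsilon)/2$, and use \eqref{inter} together with $\varepsilon<1/2$ (so that $\gamma<3/4$) to identify $D(A(\boldsymbol{U}_{0})^{\gamma})$ with $\mathbb{H}^{1+\varepsilon}$, with equivalent norms. Consequently $\boldsymbol{U}_{0}\in D(A(\boldsymbol{U}_{0})^{\gamma})$, and $\|A(\boldsymbol{U}_{0})^{\gamma}\boldsymbol{U}_{0}\|_{\mathbb{L}^{2}}$ is controlled by $\|\boldsymbol{U}_{0}\|_{\mathbb{H}^{1+\varepsilon}}$, producing the desired dependence $T=T(\|\boldsymbol{U}_{0}\|_{\mathbb{H}^{1+\varepsilon}})$. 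Theorem~\ref{localthm} then yields a solution with $A(\boldsymbol{U})^{\gamma}\boldsymbol{U}\in C([0,T];\mathbb{L}^{2})$ and $A(\boldsymbol{U})\boldsymbol{U},\,d\boldsymbol{U}/dt\in\mathcal{F}^{\gamma,\sigma}((0,T];\mathbb{L}^{2})$. Translating pointwise in $t$ through \eqref{inter}, I would conclude $\boldsymbol{U}\in C([0,T];\mathbb{H}^{1+\varepsilon})\subset C([0,T];\mathbb{L}^{2})$, $\boldsymbol{U}(t)\in\mathbb{H}^{2}_{n}$ for $t\in(0,T]$ with continuity on $(0,T]$ inherited from continuity of $A(\boldsymbol{U})\boldsymbol{U}$ into $\mathbb{L}^{2}$, and likewise $\boldsymbol{U}_{t}\in C((0,T];\mathbb{L}^{2})$.

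The main obstacle is the nonnegativity of $(u,v)$, which is what turns the abstract solution into a genuine solution of \eqref{P2}. Since all coefficients are real, the uniqueness part of Theorem~\ref{localthm} forces the solution to be real-valued. For the scalar $u$-equation, which is semilinear parabolic with a reaction vanishing at $u=0$ and with $L^{\infty}_{t,x}$ coefficients (using the Sobolev embedding $\mathbb{H}^{1+\varepsilon}\hookrightarrow C$, valid in $N=2$), testing with the negative part $u^{-}$ and applying a Gr\"onwall argument yields $u\ge 0$ on $[0,T]$. For the $v$-equation written in the divergence form of \eqref{P2}, I would test with $v^{-}$: the uniform ellipticity of the leading coefficient $1+\alpha\chi(\operatorname{Re} u)\ge 1/2$ (by construction of $\chi$), the $L^{\infty}$-bound on $\nabla u$ available from $\boldsymbol{U}\in C([0,T];\mathbb{H}^{1+\varepsilon})$ together with the Sobolev embedding in dimension two, and the vanishing of the reaction at $v=0$ combine to give a Gr\"onwall estimate forcing $v^{-}\equiv 0$. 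Once $u\ge 0$ one has $\chi(\operatorname{Re} u)=u$, so the solution of \eqref{evo} actually solves \eqref{P2}, and \eqref{exps} gives equivalence with \eqref{P}.

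Uniqueness in the stated class follows from the uniqueness part of Theorem~\ref{localthm}: any pair $(u,v)$ in that class solving \eqref{P2} with nonnegative components also satisfies \eqref{evo} in the abstract sense with $\chi(\operatorname{Re} u)=u$, so two such solutions must coincide. The delicate point throughout is the nonnegativity step for the quasilinear $v$-equation: the cross-term $-\alpha\nabla\cdot(v\nabla u)$ rules out a direct pointwise maximum principle, and one is forced to rely on the energy method, which in turn crucially leverages the $N=2$ Sobolev embedding to supply the required $L^{\infty}$-control of $u$ and of $\nabla u$.
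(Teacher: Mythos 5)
Your plan correctly identifies the two ingredients that the paper's terse remark between Theorem~\ref{localthm} and Corollary~\ref{cor:local} compresses into one sentence: (i) translating the fractional-power regularity of Theorem~\ref{localthm} into standard Sobolev spaces via~\eqref{inter} with $\gamma=(1+\varepsilon)/2<3/4$ (valid exactly because $\varepsilon<1/2$), and (ii) establishing nonnegativity of $(u,v)$ so that $\chi(\operatorname{Re}u)=u$ and the abstract solution of~\eqref{evo} is in fact a genuine solution of~\eqref{P2}. The paper itself only asserts that the regularity class is ``equivalent to the one stated below'' through~\eqref{inter}, and defers nonnegativity to a remark citing the parabolic maximum principle and \cite[Section~8.4]{Ya}; your explicit treatment of~(ii) is therefore, in spirit, more careful, and your observation that the drift term $-\alpha\nabla\cdot(v\nabla u)$ obstructs a naive pointwise maximum principle is a legitimate point.

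However, the energy argument you propose for $v$ has a genuine gap. To control the cross-term $\alpha\int_{\Omega_1} v^{-}\nabla u\cdot\nabla v^{-}\,dx$ after testing with the negative part $v^{-}$, you invoke ``the $L^{\infty}$-bound on $\nabla u$ available from $\boldsymbol{U}\in C([0,T];\mathbb{H}^{1+\varepsilon})$ together with the Sobolev embedding in dimension two.'' This is false: for $\varepsilon\in(0,1/2)$ and $N=2$ one has $H^{1+\varepsilon}(\Omega)\hookrightarrow C^{0,\varepsilon}(\overline\Omega)$ by Morrey, but $\nabla u$ lies only in $H^{\varepsilon}(\Omega)$, which does not embed into $L^{\infty}(\Omega)$ for any $\varepsilon<1$. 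The bound $\nabla u(\cdot,t)\in L^{\infty}$ does become available for $t>0$ from $W^{2,p}$ ($p>2$) parabolic regularity of the $u$-equation (as the paper notes at the end of Section~2), but the associated constant degenerates as $t\to0^{+}$, so the Gr\"onwall argument launched from $t=0$ does not close as written. To repair the step you must either (a) replace the $L^{\infty}$ bound on $\nabla u$ by a Gagliardo--Nirenberg interpolation, e.g.\ estimate the cross-term by $\alpha\|\nabla u\|_{L^{q}(\Omega_1)}\|v^{-}\|_{L^{r}(\Omega_1)}\|\nabla v^{-}\|_{L^{2}(\Omega_1)}$ with $1/q+1/r=1/2$, $q\in(2,4)$ admissible from $H^{1+\varepsilon}$, and interpolate $\|v^{-}\|_{L^{r}}$ between $L^{2}$ and $H^{1}$ so Young's inequality absorbs the gradient term; or (b) quantify the integrable blow-up of $\|\nabla u(t)\|_{L^{\infty}}$ as $t\to0^{+}$ and check it is still summable for Gr\"onwall; or (c) simply invoke the abstract positivity-preservation argument of \cite[Section~8.4]{Ya}, as the paper does. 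As stated, the nonnegativity step for $v$ is incomplete.
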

It should be noted that the
corresponding solution remains nonnegative for all $t\in[0,T]$.
Moreover, if the initial data are nonnegative and not identically zero,
then the solution becomes strictly positive.
This property follows from the parabolic maximum principle and is
also discussed in \cite[Section~8.4]{Ya}.

In addition, we make the following observation on the regularity of solutions.
Set $a(x,t):=1+\alpha\,\chi(\operatorname{Re}u(x,t))$.
Since $a$ depends only on $u$ and $u(\cdot,t)\ge0$ for $t\in[0,T]$,
we have $a(\cdot,t)\ge 1$ (uniform ellipticity).
Fix any $\tau\in(0,T)$. On $[\tau,T]$, the $u$-equation has constant principal part
and bounded right-hand side $u(\lambda-u-b\1_{\Omega_{1}}v)$ with 
$b\1_{\Omega_{1}}\in L^\infty$;
hence by the $L^p$-parabolic regularity, one obtains
\[
u(\cdot,t)\in W^{2,p}(\Omega)\ \text{for any }p\in (1,\infty ),
\quad t\in[\tau,T],
\]
whence $u(\cdot,t)\in 
C^{1,\theta}(\overline{\Omega})$ for $p>2$ and some $\theta\in(0,1)$.
Consequently,
\[
a(\cdot,t)=1+\alpha\,\chi(\operatorname{Re}u(\cdot,t))\in W^{1,\infty}(\Omega_1)
\quad\text{uniformly for }t\in[\tau,T].
\]
With this coefficient regularity and $a(\cdot,t)\ge 1$, 
the $v$-equation is a parabolic Neumann problem whose principal operator
$-\nabla\cdot(a\nabla\cdot)$ is uniformly elliptic with $a\in W^{1,\infty}$.
The standard parabolic $L^2$-theory then yields
\[
v(\cdot,t)\in H^2_n(\Omega_1)\quad\text{for all }t\in[\tau,T].
\]
Thus, in our setting the components satisfy
\[
u(\cdot,t)\in H^2_n(\Omega),\quad v(\cdot,t)\in H^2_n(\Omega_1)\qquad\text{for every }t>0,
\]
and the coefficient $a(\cdot,t)$ is uniformly elliptic with $a\in W^{1,\infty}(\Omega_1)$
for every interval.
\section{Main results on the stationary problem}
In this section, we present the main results concerning the stationary 
solutions of \eqref{P}.  
By \eqref{exps}, the stationary system takes the form  
\begin{equation}\label{sp}
\begin{cases}
\Delta u + u(\lambda - u - b\1_{\Omega_1}v) = 0, & \text{in } \Omega,\\[3pt]
\Delta v + \alpha (u\Delta v - v\Delta u) + v(\mu + c u - v) = 0, 
& \text{in } \Omega_{1},\\[3pt]
\partial_n u = 0, & \text{on } \partial\Omega,\\
\partial_n v = 0, & \text{on } \partial\Omega_{1}.
\end{cases}
\end{equation}
Since only nonnegative states are biologically relevant, 
\eqref{sp} may be rewritten in the equivalent form
\begin{equation}\label{sp2}
\begin{cases}
\Delta u + u(\lambda - u - b\1_{\Omega_1}v) = 0, & \text{in } \Omega,\\[3pt]
\displaystyle 
\Delta v + \frac{v}{1+\alpha u}
\bigl\{\alpha u(\lambda - u - b v) + \mu + c u - v\bigr\} = 0, 
& \text{in } \Omega_{1},\\[3pt]
\partial_n u = 0, & \text{on } \partial\Omega,\\
\partial_n v = 0, & \text{on } \partial\Omega_{1},
\end{cases}
\end{equation}
which is
obtained by substituting the first equation into the second one in \eqref{sp}.  For a class of solutions of
\eqref{sp} (or \eqref{sp2}), we introduce the Sobolev space
\begin{equation}\label{w2pn}
X=W^{2,p}_n(\Omega)\times W^{2,p}_n(\Omega_1)\quad\mbox{for}\quad p>N,
\end{equation}
where 
$W^{2,p}_n(O)=\{\,w\in W^{2,p}(O):\partial_n w=0 \ \text{on }\partial O\,\}$
$(O=\Omega\ \mbox{or}\ \Omega_{1})$.
We call $(u,v)\in X$ a solution of \eqref{sp} (or equivalently \eqref{sp2}) 
if it satisfies these equations.  
It should be noted that, due to the discontinuity of $b\,\1_{\Omega_{1}}$ 
across the interface $\partial\Omega_{0}$, such a solution $(u,v)\in X$ satisfies 
$v\in C^{2}(\overline{\Omega}_{1})$, whereas $u$ does not necessarily belong to 
$C^{2}(\overline{\Omega})$.
A pair $(u,v)\in X$ is called a \emph{positive solution} of \eqref{sp} (equivalently \eqref{sp2})
if $u>0$ in $\Omega$ and $v>0$ in $\Omega_{1}$. 
In spite of the discontinuity of $b\,\1_{\Omega_{1}}$, it follows from 
the Sobolev embedding theorem and the
Bony strong maximum principle (e.g., \cite{Bo}, \cite[Theorem~7.1]{L}) that 
any nonnegative solution $(u,v)\in X$ whose components are both 
nontrivial must actually be strictly positive.
Such a positive solution corresponds to a biological coexistence of prey and predator 
within the refuge-type domain $\Omega_{1}$.  
The main results involve the global bifurcation structure of positive 
solutions of \eqref{sp} (equivalently \eqref{sp2}), 
together with their asymptotic behavior as the nonlinear diffusion parameter 
$\alpha\to\infty$.

For $q\in L^{\infty}(\Omega )$, we denote by $\sigma_1 (q,\Omega )$ 
the first eigenvalue of $-\Delta +q$ over $\Omega$ 
with the homogeneous Neumann boundary condition. 
As is well known, the following properties hold:
\begin{itemize}
\item[(i)]
The mapping $q\mapsto \sigma_1 (q,\Omega ):L^{\infty}(\Omega )\to \mathbb{R}$ is continuous.
\item[(ii)]
$\sigma_1 (0,\Omega )=0$.
\item[(iii)]
If $q_1 \ge q_2 \mbox{ and }q_1 \not\equiv q_2$, then 
$\sigma_1 (q_1 ,\Omega )>\sigma_1 (q_2 ,\Omega )$.
\end{itemize}
In addition, we denote by $\sigma_1^D (\Omega_0 )$ 
the first eigenvalue of $-\Delta$ over $\Omega_0$ 
with the homogeneous Dirichlet boundary condition. 

The curve $\lambda=\sigma_1(b\mu\1_{\Omega_1},\Omega)$
for $\mu\ge 0$ represents the set of parameter values 
$(\lambda,\mu)$ where positive solutions bifurcate from
the semitrivial solution $(0,\mu)$; this fact will be established in Theorem \ref{exthm1} below.
The following lemma, derived from \cite[Theorem~2.1]{DS} (see also \cite[Lemma~2.1]{Oeda2011}), shows that the curve $\lambda=\sigma_1(b\mu\1_{\Omega_1},\Omega)$ 
is monotone increasing and uniformly bounded for $\mu\ge 0$, see Figure 2.
\begin{lem}\label{du-shi}
The function $\mu\mapsto\sigma_1(b\mu\1_{\Omega_1},\Omega)$ is continuous and strictly
increasing for $\mu\ge 0$, and satisfies
\[
\sigma_1(0,\Omega)=0,\quad 
\sigma_1(b\mu\1_{\Omega_1},\Omega)<b\mu\ \text{for all }\mu>0,\quad
\lim_{\mu\to\infty}\sigma_1(b\mu\1_{\Omega_1},\Omega)=\sigma_1^{D}(\Omega_0).
\]
\end{lem}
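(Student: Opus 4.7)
The first three assertions follow directly from the enumerated properties (i)--(iii) of the Neumann first eigenvalue, so I would dispose of them in a single paragraph. Continuity of $\mu\mapsto\sigma_1(b\mu\1_{\Omega_1},\Omega)$ is an immediate consequence of (i) since $\mu\mapsto b\mu\1_{\Omega_1}$ is Lipschitz continuous from $[0,\infty)$ into $L^\infty(\Omega)$. The equality $\sigma_1(0,\Omega)=0$ is exactly (ii). For strict monotonicity, note that if $0\le\mu_2<\mu_1$ then $b\mu_1\1_{\Omega_1}\ge b\mu_2\1_{\Omega_1}$ with strict inequality on $\Omega_1$, hence (iii) yields $\sigma_1(b\mu_1\1_{\Omega_1},\Omega)>\sigma_1(b\mu_2\1_{\Omega_1},\Omega)$. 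The sharp upper bound $\sigma_1(b\mu\1_{\Omega_1},\Omega)<b\mu$ then comes from comparing $b\mu\1_{\Omega_1}$ with the constant $b\mu$: these agree on $\Omega_1$ but differ on $\Omega_0$, so (iii) combined with the translation identity $\sigma_1(b\mu,\Omega)=b\mu+\sigma_1(0,\Omega)=b\mu$ gives the required strict inequality.

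The substantive point is the large-$\mu$ limit $\sigma_\infty:=\lim_{\mu\to\infty}\sigma_1(b\mu\1_{\Omega_1},\Omega)=\sigma_1^D(\Omega_0)$, which by monotonicity already exists in $(0,\infty]$. My plan is to verify the two inequalities separately via the Rayleigh quotient characterization of $\sigma_1$. For the upper bound, let $\psi\in H^1_0(\Omega_0)$ be the $L^2$-normalized Dirichlet first eigenfunction on $\Omega_0$, and let $\widetilde\psi\in H^1(\Omega)$ be its extension by zero. Then the potential term $b\mu\int_\Omega\1_{\Omega_1}\widetilde\psi^{\,2}\,dx$ vanishes and the gradient term equals $\sigma_1^D(\Omega_0)$, so inserting $\widetilde\psi$ into the Neumann Rayleigh quotient yields $\sigma_1(b\mu\1_{\Omega_1},\Omega)\le\sigma_1^D(\Omega_0)$ uniformly in $\mu$.

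For the lower bound, let $\phi_\mu\in H^1(\Omega)$ denote the $L^2(\Omega)$-normalized positive first eigenfunction associated with $\sigma_1(b\mu\1_{\Omega_1},\Omega)$. Testing the eigenvalue equation against $\phi_\mu$ and using the upper bound just obtained gives
\[
\int_\Omega|\nabla\phi_\mu|^2\,dx+b\mu\int_{\Omega_1}\phi_\mu^{\,2}\,dx=\sigma_1(b\mu\1_{\Omega_1},\Omega)\le\sigma_1^D(\Omega_0),
\]
so $\{\phi_\mu\}$ is bounded in $H^1(\Omega)$ and $\int_{\Omega_1}\phi_\mu^{\,2}\,dx=O(1/\mu)$. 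Extracting a subsequence with $\phi_\mu\rightharpoonup\phi$ in $H^1(\Omega)$ and $\phi_\mu\to\phi$ in $L^2(\Omega)$, one obtains $\phi\equiv 0$ a.e.\ on $\Omega_1$ together with $\|\phi\|_{L^2(\Omega_0)}=1$. Since $\phi\in H^1(\Omega)$ vanishes on $\Omega_1$, its restriction to $\Omega_0$ belongs to $H^1_0(\Omega_0)$. By weak lower semicontinuity of the Dirichlet integral and the variational characterization of $\sigma_1^D(\Omega_0)$,
\[
\sigma_\infty\ge\liminf_{\mu\to\infty}\int_\Omega|\nabla\phi_\mu|^2\,dx\ge\int_{\Omega_0}|\nabla\phi|^2\,dx\ge\sigma_1^D(\Omega_0),
\]
which together with the upper bound closes the argument.

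The only delicate point I anticipate is the trace identification: verifying that any $\phi\in H^1(\Omega)$ with $\phi\equiv 0$ on $\Omega_1$ restricts to an element of $H^1_0(\Omega_0)$. This rests on the matching of interior traces from either side of the interface $\partial\Omega_0$, which is standard given the smoothness of $\partial\Omega_0$, but it is the one step where the refuge geometry genuinely enters. All remaining ingredients are routine consequences of the variational principle and the monotonicity/translation properties (i)--(iii) of the first Neumann eigenvalue.
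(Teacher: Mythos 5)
The paper does not prove Lemma~\ref{du-shi}: it is quoted from \cite[Theorem~2.1]{DS} (see also \cite[Lemma~2.1]{Oeda2011}), so there is no in-text proof to compare against. Your argument is correct and is, in substance, the standard variational derivation underlying those references. A few small remarks. The first three assertions do follow from the listed properties (i)--(iii) as you say, but the inequality $\sigma_1(b\mu\1_{\Omega_1},\Omega)<b\mu$ additionally uses the translation identity $\sigma_1(q+C,\Omega)=\sigma_1(q,\Omega)+C$ for constants $C$, which is not among (i)--(iii); it is elementary from the Rayleigh quotient, but should be invoked explicitly. For the large-$\mu$ limit, the upper bound via the zero extension of the Dirichlet ground state and the lower bound via $H^1$-boundedness and Rellich compactness of the normalized Neumann eigenfunctions are exactly right, and you correctly single out the identification $\phi|_{\Omega_0}\in H^1_0(\Omega_0)$ as the one genuinely geometric step. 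Since $\overline{\Omega}_0\subset\Omega$, you can sidestep the trace-matching argument as follows: pick $\eta\in C_c^\infty(\Omega)$ with $\eta\equiv 1$ on a neighborhood of $\overline{\Omega}_0$; then $\eta\phi$ agrees with $\phi$ on $\Omega_0$, vanishes on $\Omega_1$ and near $\partial\Omega$, and its extension by zero to $\mathbb{R}^N$ lies in $H^1(\mathbb{R}^N)$, whence $\phi|_{\Omega_0}\in H^1_0(\Omega_0)$ by the usual zero-extension characterization (which needs only Lipschitz regularity of $\partial\Omega_0$, here smooth by assumption). Either your trace argument or this cut-off argument is fine, and the rest of the proof is complete.
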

It should be remarked that the boundedness of 
$\lim_{\mu\to\infty}\sigma_1(b\mu\1_{\Omega_1},\Omega)$
is a consequence of the presence of the protection zone $\Omega_{0}$, 
since $\sigma_{1}^{D}(\Omega_{0})\to\infty$ as $|\Omega_{0}|\to 0$. 
We also note that $\sigma_1(b\mu\1_{\Omega_1},\Omega)$ is independent of the nonlinear 
diffusion parameter~$\alpha$.
In what follows, we denote $\sigma_{1}(b\mu\1_{\Omega_{1}},\Omega)$ 
simply by $\sigma_{1}(b\mu\1_{\Omega_{1}})$.
Furthermore, the associated positive eigenfunction 
with $L^2$ normalization will be denoted by $\phi^*$, that is,
\begin{equation}\label{phistar1}
\begin{cases}
-\Delta \phi^* +b\mu\1_{\Omega_1} \phi^* =\sigma_1 (b\mu\1_{\Omega_1} )\phi^*,\quad \phi^{*}>0 \ \ \mbox{in}\ \Omega ,\\
\partial_n \phi^* =0\ \ \mbox{on}\ \partial \Omega ,\quad \|\phi^{*}\|_{L^2(\Omega)}=1.
\end{cases}
\end{equation}
We next turn to the stationary problem and describe the structure of 
positive solutions.  
We regard $\lambda$ as the bifurcation parameter and introduce two 
branches of semitrivial solutions,
\begin{equation}\label{gammauv}
\varGamma_u=\{(\lambda,u,v)=(\lambda,\lambda,0):\lambda>0\},
\qquad
\varGamma_v=\{(\lambda,u,v)=(\lambda,0,\mu):\mu>0\}.
\end{equation}
We are now in position to state the existence theorem for positive 
solutions of \eqref{sp2}.
\begin{thm}\label{exthm1}
Suppose that $\mu \ge 0$ is fixed. Then \eqref{sp2} has at least one positive solution 
for any $\lambda >\sigma_1 (b\mu\1_{\Omega_1} )$. 
More specifically, for any fixed $\mu >0$, positive solutions of \eqref{sp2} bifurcate from $\varGamma_v$ 
if and only if $\lambda =\sigma_1 (b\mu\1_{\Omega_1} )$
in the sense that
all positive solutions of \eqref{sp2} 
near $(\sigma_1 (b\mu\1_{\Omega_1} ),0,\mu )\in \mathbb{R}\times X$ 
can be expressed as
$$
\varGamma_{1, \delta} =\left\{ (\lambda ,u,v)=
\left( \lambda (s),s(\phi^* +\widetilde{u}(s) ),\mu +s(\psi^* +\widetilde{v}(s))\right) :s\in (0,\delta )\right\}
$$
for some $\delta >0$. Here $\psi^*=\psi^{*}(\alpha )$ is given by
\begin{equation}\label{psistar1}
\psi^* :=(-\Delta +\mu )^{-1}_{\Omega_1}
\left( 
  \mu \left[ 
    \alpha \left\{ \sigma_1(b\mu\1_{\Omega_1}) - b\mu \right\} + c
  \right] \phi^*
\right),
\end{equation}
where $(-\Delta +\mu )^{-1}_{\Omega_1}$ is the inverse operator of $-\Delta +\mu$ 
over $\Omega_1$ subject to the homogeneous Neumann boundary condition.
The functions
$(\lambda (s),\widetilde{u}(s),
\widetilde{v}(s))$ depend smoothly on $s$ 
and satisfy $(\lambda (0),\widetilde{u}(0),\widetilde{v}(0))=
(\sigma_1 (b\mu\1_{\Omega_1} ),0,0)$, $\int_{\Omega}\widetilde{u}(s)\phi^* dx=0$ for any $s\in (0,\delta)$.
Moreover, 
$
\varGamma_{1, \delta}$
can be extended as an unbounded connected set 
$\varGamma_1 (\subset \mathbb{R}\times X)$
of positive solutions of
\eqref{sp2} in direction to $\lambda\to\infty$,
i.e.
$$
\{ \lambda:(\lambda ,u,v)\in \varGamma_1 \}\supset 
\left( \sigma_1 (b\mu\1_{\Omega_1} ),\infty \right) .
$$
Furthermore, there exists a positive constant 
$\alpha^{*}=\alpha^{*}(\mu ,b\1_{\Omega_1}, c,\Omega ,\Omega_0 )$ such that 
the bifurcation from $\varGamma_v$ at 
$\lambda =\sigma_1 (b\mu\1_{\Omega_1} )$ is supercritical
($\lambda'(0)>0$) if $\alpha <\alpha^{*}$, 
and subcritical
($\lambda'(0)<0)$ if $\alpha >\alpha^{*}$.
\end{thm}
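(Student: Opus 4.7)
The plan is to set up \eqref{sp2} as a bifurcation problem $F(\lambda,U)=0$ on $X$, with $U=(u,v)$ and $\lambda$ as bifurcation parameter, apply the Crandall--Rabinowitz local bifurcation theorem at $(\lambda_{0},0,\mu)$ where $\lambda_{0}:=\sigma_{1}(b\mu\1_{\Omega_{1}})$, and then globalise the resulting branch via Rabinowitz's theorem together with $L^{\infty}$ a priori bounds. The linearisation $L:=D_{U}F(\lambda_{0},0,\mu)$ has block-lower-triangular form: its $(1,1)$-block is $\Delta+\lambda_{0}-b\mu\1_{\Omega_{1}}$ on $\Omega$, whose kernel is one-dimensional and spanned by $\phi^{*}$; its $(1,2)$-block vanishes since $\partial_{v}F_{1}|_{(0,\mu)}=0$; its $(2,2)$-block is $\Delta-\mu$ on $\Omega_{1}$, invertible for $\mu>0$; and the off-diagonal entry is the multiplication operator $\phi\mapsto\mu[\alpha(\lambda_{0}-b\mu)+c]\,\phi|_{\Omega_{1}}$. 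Consequently $\ker L=\mathrm{span}\{U_{0}\}$ with $U_{0}=(\phi^{*},\psi^{*})$, where $\psi^{*}$ is obtained by inverting $-\Delta+\mu$ against the off-diagonal contribution and coincides with \eqref{psistar1}. The cokernel is generated by $\ell(f,g):=\int_{\Omega}f\,\phi^{*}\,dx$, and the transversality condition $\ell(D_{\lambda}L\cdot U_{0})=\int_{\Omega}(\phi^{*})^{2}\,dx=1\ne 0$ holds, so Crandall--Rabinowitz delivers the local curve $\varGamma_{1,\delta}$ in the stated normal form with the orthogonality $\int_{\Omega}\widetilde{u}(s)\phi^{*}\,dx=0$.

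For the global extension, I would first derive a priori bounds on positive solutions. The maximum principle applied to the first equation gives $\|u\|_{L^{\infty}(\Omega)}\le\lambda$; evaluating the second equation of \eqref{sp2} at an interior maximum of $v$ (where $\Delta v\le 0$) yields the algebraic inequality $v(1+\alpha bu)\le\alpha u(\lambda-u)+\mu+cu$, bounding $\|v\|_{L^{\infty}(\Omega_{1})}$ in terms of $\lambda,\mu,b,c$, and elliptic $L^{p}$-regularity then upgrades this to a bound in $X$ on every compact $\lambda$-interval; in addition, integrating the first equation against $1$ shows $\lambda>0$ for any positive solution. Applying the Rabinowitz global bifurcation theorem, the connected component $\varGamma_{1}$ of positive solutions through $(\lambda_{0},0,\mu)$ is either unbounded in $\mathbb{R}\times X$ or meets a further bifurcation point on the semitrivial set. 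A second crossing on $\varGamma_{v}$ is excluded by simplicity of $\lambda_{0}$, while a crossing on $\varGamma_{u}=\{(\lambda,\lambda,0)\}$ would require the linearisation of the $v$-equation around $(\lambda,0)$ to be singular with positive eigenfunction, i.e.\ $(\mu+c\lambda)/(1+\alpha\lambda)=0$, forcing $\lambda=-\mu/c\le 0$, which is impossible. Hence $\varGamma_{1}$ is unbounded, and its projection onto the $\lambda$-axis, being connected, contained in $(0,\infty)$, and unbounded by the a priori bounds, contains the interval $(\lambda_{0},\infty)$.

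To determine the direction of bifurcation I would apply the standard Crandall--Rabinowitz formula $\lambda'(0)=-\tfrac{1}{2}\,\ell\bigl(F_{UU}[U_{0},U_{0}]\bigr)/\ell\bigl(F_{\lambda U}\cdot U_{0}\bigr)$. The denominator equals $1$, and differentiation of $F_{1}=\Delta u+u(\lambda-u-b\1_{\Omega_{1}}v)$ gives the first component of $F_{UU}[U_{0},U_{0}]$ as $-2(\phi^{*})^{2}-2b\1_{\Omega_{1}}\phi^{*}\psi^{*}$, so that
\[
\lambda'(0)=\int_{\Omega}(\phi^{*})^{3}\,dx+b\int_{\Omega_{1}}(\phi^{*})^{2}\psi^{*}\,dx.
\]
Writing $\psi^{*}=\mu[\alpha(\sigma_{1}(b\mu\1_{\Omega_{1}})-b\mu)+c]\,\Phi$ with $\Phi:=(-\Delta+\mu)^{-1}_{\Omega_{1}}\phi^{*}>0$, this quantity is an affine function of $\alpha$ with slope $-b\mu(b\mu-\sigma_{1}(b\mu\1_{\Omega_{1}}))\int_{\Omega_{1}}(\phi^{*})^{2}\Phi\,dx<0$ (by Lemma \ref{du-shi}) and strictly positive intercept at $\alpha=0$; hence a unique $\alpha^{*}>0$ depending only on $\mu,b\1_{\Omega_{1}},c,\Omega,\Omega_{0}$ separates the supercritical regime $\alpha<\alpha^{*}$ from the subcritical regime $\alpha>\alpha^{*}$. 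The principal technical obstacle throughout is the discontinuity of $b\1_{\Omega_{1}}$ across $\partial\Omega_{0}$, which must be absorbed both in the Fredholm analysis of $L$ and in the $L^{p}$-bootstrap; the choice $p>N$ (giving $X\hookrightarrow C^{1}$) together with the Bony strong maximum principle already used in the excerpt keeps this obstruction manageable, since $u$ encounters the interface only through a bounded $L^{\infty}$-coefficient and the $v$-equation is posed on the smooth domain $\Omega_{1}$.
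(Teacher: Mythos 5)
Your overall strategy coincides with the paper's: Crandall--Rabinowitz at $\lambda_{0}=\sigma_{1}(b\mu\1_{\Omega_{1}})$ exploiting the block-triangular structure of the linearization, the direction formula via the cokernel functional $\ell(f,g)=\int_{\Omega}f\phi^{*}\,dx$ yielding $\lambda'(0)=\int_{\Omega}(\phi^{*}+b\1_{\Omega_{1}}\psi^{*})(\phi^{*})^{2}\,dx$, the observation that this is an affine decreasing function of $\alpha$ with positive intercept, and Rabinowitz global continuation combined with the $L^{\infty}$/elliptic a priori bounds. The local and directional parts are correct and essentially identical to the paper. However, there are two genuine gaps in the global and boundary-case parts.

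First, before the Rabinowitz alternatives can be exploited, one must establish that the continuum $\varGamma_{1}$ remains inside the positive cone $\mathbb{R}\times P$, $P=\{(u,v)\in X:u>0\text{ in }\overline{\Omega},\ v>0\text{ in }\overline{\Omega}_{1}\}$. Your argument rules out a return to $\varGamma_{v}$ (uniqueness of the bifurcation point) and a touching of $\varGamma_{u}$ (sign of $-\mu/c$), but by the Bony strong maximum principle a degeneration at the boundary of $P$ has \emph{three} possible limits: $(u_{\infty},v_{\infty})=(0,0)$, $(u_{\infty},v_{\infty})=(\lambda,0)$, or $(u_{\infty},v_{\infty})=(0,\mu)$; you never exclude the trivial state $(0,0)$. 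The paper dispatches all three at once by integrating the second equation of \eqref{sp2} over $\Omega_{1}$: since
\[
\int_{\Omega_{1}}\frac{v_{k}}{1+\alpha u_{k}}\bigl\{\alpha u_{k}(\lambda_{k}-u_{k}-bv_{k})+\mu+cu_{k}-v_{k}\bigr\}\,dx=0,
\]
and the bracketed factor tends uniformly to $\mu>0$ (resp.\ to $\mu+c\lambda_{\infty}>0$) whenever $v_{k}\to 0$ with $u_{k}\to 0$ (resp.\ $u_{k}\to\lambda_{\infty}>0$), the left side is eventually strictly positive, a contradiction; case $(0,\mu)$ is then excluded since it would force $\lambda_{\infty}=\lambda_{0}$, the deleted point. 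Without some version of this argument, the Rabinowitz alternatives cannot be reduced to (ii)/(iii) on the semitrivial set only, and unboundedness is not obtained.

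Second, the theorem asserts existence of a positive solution for all $\lambda>\sigma_{1}(b\mu\1_{\Omega_{1}})$ also when $\mu=0$, but in that case $\sigma_{1}(0)=0$ and the linearization at $(\lambda,u,v)=(0,0,0)$ has a two-dimensional kernel spanned by $(1,0)$ and $(0,1)$, so Crandall--Rabinowitz as you apply it is not available. Your proposal is silent on this boundary case. The paper handles $\mu=0$ by a separate limiting argument: for fixed $\lambda>0$ take $\mu_{k}\searrow 0$, use the a priori bounds of Lemma \ref{ape} to extract a $C^{1}$-convergent subsequence of positive solutions $(u_{k},v_{k})$, and use the integral identities obtained from both equations to exclude degeneration of the limit to a semitrivial or trivial state. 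You would need to append a comparable argument to cover $\mu=0$.
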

\begin{figure}
\centering
\includegraphics*[scale=.9]{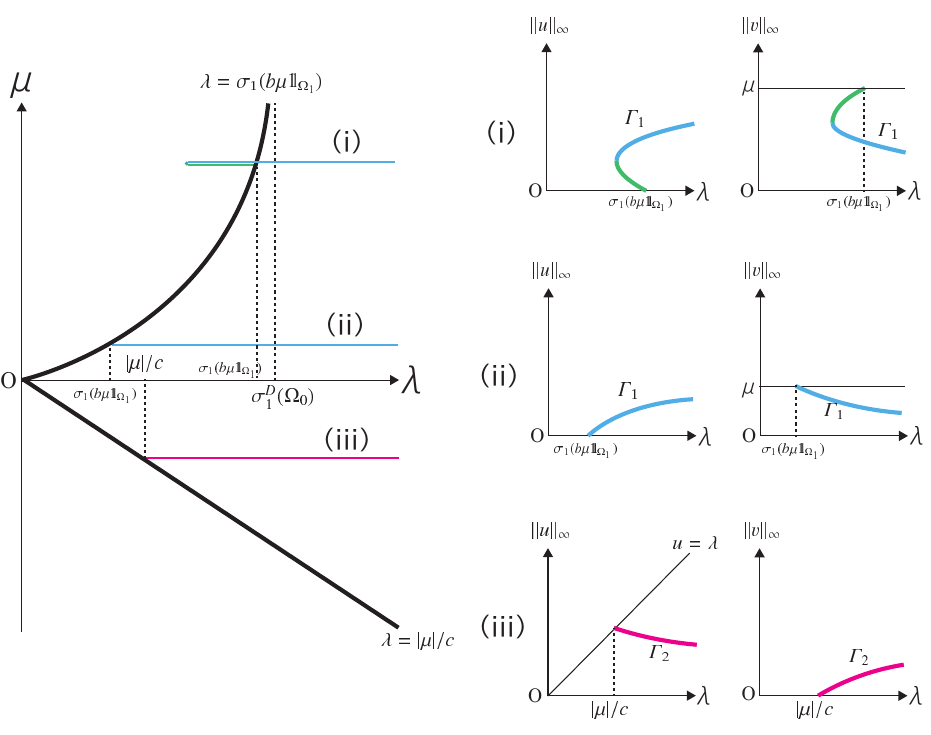}
\caption{
(Left) Graphs of $\lambda=\sigma_{1}(b\mu\1_{\Omega_{1}})$ and $\lambda=|\mu|/c$ in the $\lambda\mu$ plane. (Right) Typical bifurcation diagrams: (i) the case $\alpha>\alpha^{*}$ in Theorem \ref{exthm1}; (ii) the case $\alpha<\alpha^{*}$ in Theorem \ref{exthm1}; (iii) the case in Theorem \ref{exthm2}.
}
\label{fig2}
\end{figure}

By Theorem \ref{exthm1}, when $\mu>0$ and $\alpha>\alpha^{*}$, the curve 
$\varGamma_{1,\delta}$, which initially bifurcates in the direction of decreasing 
$\lambda$ from the bifurcation point $\lambda=\sigma_{1}(b\mu\1_{\Omega_{1}})$, eventually 
turns around and extends toward $\lambda\to\infty$.  
This indicates that the global continuum $\varGamma_{1}$ has a saddle-node-type bifurcation (turning point), 
as illustrated by the typical branch shown in Figure 1(i).
Consequently, for values of $\lambda$ slightly below the bifurcation point 
$\sigma_{1}(b\mu\1_{\Omega_{1}})$, \eqref{sp2} admits at least two 
positive solutions.
From a biological perspective, this multiplicity of coexistence steady states of \eqref{P}
occurs only through the combined effects of the protection zone $\Omega_{0}$ and the 
directed population flux $\alpha$.  
Neither mechanism alone is sufficient to generate this type of bistability; it is 
precisely their interaction that enables the emergence of multiple stable coexistence configurations.

On the other hand, in the case $\mu<0$, the line $\lambda=|\mu|/c$
represents the set of parameter values $(\lambda,\mu)$ at which positive 
solutions bifurcate from the other semitrivial solution $(\lambda,0)$,
see Figure 2(iii).
\begin{thm}\label{exthm2}
Suppose that $\mu <0$ is fixed. Then \eqref{sp2} has at least one positive solution 
for any $\lambda >|\mu |/c$. 
To be more specific, 
positive solutions of \eqref{sp2} bifurcate from $\varGamma_u$ 
if and only if $\lambda =|\mu |/c$. 
In addition, all positive solutions of \eqref{sp2} 
near $(|\mu |/c,\lambda ,0)\in \mathbb{R}\times X$ 
can be expressed as
$$
\varGamma_{2, \delta} =\left\{ (\lambda ,u,v)=
\left( \lambda (s),\lambda +s(\phi_* +\widetilde{u}(s)),s(1+\widetilde{v}(s))\right) :s\in (0,\delta )\right\}
$$
for some $\delta >0$. Here 
$\phi_*$ is given by
\begin{equation}\label{phistar2}
\phi_* =(-\Delta +\lambda )^{-1}_{\Omega}[-b\1_{\Omega_1}\lambda ],
\end{equation}
where $(-\Delta +\lambda )^{-1}_{\Omega}$ is the inverse operator of $-\Delta +\lambda$ 
over $\Omega$ subject to the homogeneous Neumann boundary condition.
The functions
$(\lambda (s),\widetilde{u}(s),\widetilde{v}(s))$ depend smoothly on
$s$ 
and satisfies $(\lambda (0),\widetilde{u}(0),\widetilde{v}(0))=(|\mu |/c,0,0)$,
$\int_{\Omega_1}\widetilde{v}(s)dx=0$ for any $s\in (0, \delta )$.
Moreover,
$\varGamma_{2,\delta}$
can be extended as a global connected set
$\varGamma_2 (\subset \mathbb{R}\times X)$ of positive solutions of \eqref{sp2} satisfying
$$
\{ \lambda:(\lambda ,u,v)\in \varGamma_2 \}\supset (|\mu |/c,\infty ).
$$
Furthermore, the bifurcation from $\varGamma_u$ at $\lambda =|\mu |/c$ 
is supercritical 
($\lambda'(0)>0$)
for any $\alpha$.
\end{thm}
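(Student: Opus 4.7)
The plan is to apply the Crandall--Rabinowitz theorem on bifurcation from a simple eigenvalue at $\lambda_{0}:=|\mu|/c$, compute the bifurcation direction $\lambda'(0)$ by a Lyapunov--Schmidt expansion, and then invoke Rabinowitz's global bifurcation theorem together with uniform a priori estimates to continue the local branch up to $\lambda=\infty$.

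First I would shift $u=\lambda+w$ so that $\varGamma_{u}$ corresponds to $(w,v)=(0,0)$, and view \eqref{sp2} as $\mathcal{F}(\lambda,w,v)=0$ in $X$. A direct computation gives the linearization
\[
L_{\lambda}(W,V)=\Bigl(\Delta W-\lambda W-b\lambda\1_{\Omega_{1}}V,\ \Delta V+\tfrac{\mu+c\lambda}{1+\alpha\lambda}V\Bigr),
\]
which is upper triangular, and its second block has trivial kernel iff $\tfrac{\mu+c\lambda}{1+\alpha\lambda}\neq 0$, since $0$ is the first Neumann eigenvalue of $-\Delta$ on $\Omega_{1}$. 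Hence $\ker L_{\lambda}\neq\{0\}$ only at $\lambda_{0}=|\mu|/c$, where it is spanned by $(\phi_{*},1)$ with $\phi_{*}$ given by \eqref{phistar2}. By Fredholm theory the cokernel corresponds to the functional $\ell^{*}(f,g):=\int_{\Omega_{1}}g\,dx$, and the transversality condition
\[
\ell^{*}\!\bigl(\partial_{\lambda}L_{\lambda_{0}}(\phi_{*},1)\bigr)=|\Omega_{1}|\,\frac{c-\alpha\mu}{(1+\alpha\lambda_{0})^{2}}>0
\]
holds since $\mu<0$. The Crandall--Rabinowitz theorem then yields the local branch $\varGamma_{2,\delta}$ in the stated form.

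To determine the sign of $\lambda'(0)=\lambda_{1}$, I would expand $w=s\phi_{*}+s^{2}w_{2}+\cdots$, $v=s+s^{2}v_{2}+\cdots$, $\lambda=\lambda_{0}+s\lambda_{1}+\cdots$ with the normalization $\int_{\Omega_{1}}v_{2}\,dx=0$, substitute into the $v$-equation of \eqref{sp2}, and collect the $s^{2}$-terms. Applying $\ell^{*}$ to the resulting equation gives
\[
c\lambda_{1}|\Omega_{1}|=(\alpha b\lambda_{0}+1)|\Omega_{1}|-(c-\alpha\lambda_{0})\int_{\Omega_{1}}\phi_{*}\,dx.
\]
The key ingredient for $\lambda_{1}>0$ regardless of $\alpha\ge 0$ is the two-sided pointwise bound $-b\le\phi_{*}\le 0$ in $\Omega$, which follows from the weak maximum principle for the coercive operator $-\Delta+\lambda_{0}$ applied to $\phi_{*}$ (nonpositive source) and to $\phi_{*}+b$ (which solves $(-\Delta+\lambda_{0})(\phi_{*}+b)=b\lambda_{0}\1_{\Omega_{0}}\ge 0$). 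If $c-\alpha\lambda_{0}\ge 0$, then $-(c-\alpha\lambda_{0})\int_{\Omega_{1}}\phi_{*}\ge 0$ because $\phi_{*}\le 0$, so $\lambda_{1}\ge(\alpha b\lambda_{0}+1)/c>0$. If $c-\alpha\lambda_{0}<0$, I would use $\int_{\Omega_{1}}\phi_{*}\ge -b|\Omega_{1}|$; multiplying by the negative quantity $c-\alpha\lambda_{0}$ and negating yields $-(c-\alpha\lambda_{0})\int_{\Omega_{1}}\phi_{*}\ge (c-\alpha\lambda_{0})b|\Omega_{1}|$, and substitution gives $c\lambda_{1}|\Omega_{1}|\ge(1+bc)|\Omega_{1}|>0$. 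This double use of the same bound, in opposite directions depending on the sign of $c-\alpha\lambda_{0}$, is the main delicate point of the argument.

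For the global continuation, I would invoke Rabinowitz's alternative to extend $\varGamma_{2,\delta}$ to a maximal connected continuum $\varGamma_{2}$ of positive solutions. The strong maximum principle of Bony preserves strict positivity along $\varGamma_{2}$, so the branch can meet a semitrivial family only at a bifurcation point for positive solutions; since $\varGamma_{v}$ is empty for $\mu<0$ and $\lambda_{0}$ is the unique such point on $\varGamma_{u}$, the branch cannot loop back and must therefore be unbounded in $\mathbb{R}\times X$. Testing the $u$-equation at an interior maximum gives $u\le\lambda$, and inserting this into \eqref{sp2} and testing $v$ at a maximum yields $v\le\alpha\lambda^{2}+c\lambda+\mu$; standard $W^{2,p}$-elliptic regularity then bounds $(u,v)$ in $X$ uniformly on any compact $\lambda$-interval, so the unboundedness of $\varGamma_{2}$ must be in the $\lambda$-direction, and by the supercriticality established above its projection covers the whole interval $(|\mu|/c,\infty)$.
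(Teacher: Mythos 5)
Your proposal is correct and follows the same overall strategy as the paper (Crandall--Rabinowitz local bifurcation plus Rabinowitz global continuation with $W^{2,p}$ a priori bounds). The computation of $\lambda'(0)$ is equivalent: your identity $c\lambda_{1}|\Omega_{1}|=(\alpha b\lambda_{0}+1)|\Omega_{1}|-(c-\alpha\lambda_{0})\int_{\Omega_{1}}\phi_{*}\,dx$ is algebraically the same as the paper's expression $\lambda'(0)=\bigl[c^{2}\|\phi_{*}\|_{L^{1}(\Omega_{1})}+c|\Omega_{1}|+\alpha|\mu|\bigl(b|\Omega_{1}|-\|\phi_{*}\|_{L^{1}(\Omega_{1})}\bigr)\bigr]/(c^{2}|\Omega_{1}|)$ after substituting $\lambda_{0}=|\mu|/c$ and $\int_{\Omega_{1}}\phi_{*}=-\|\phi_{*}\|_{L^{1}(\Omega_{1})}$.

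The genuinely different ingredient is your argument that $\lambda'(0)>0$. You establish the pointwise two-sided bound $-b\le\phi_{*}\le 0$ by the weak maximum principle applied to $\phi_{*}$ and to $\phi_{*}+b$, and then split into cases according to the sign of $c-\alpha\lambda_{0}$, using one half of the bound in each case. The paper instead integrates the equation $(-\Delta+\lambda_{0})\phi_{*}=-b\lambda_{0}\1_{\Omega_{1}}$ over $\Omega$ with the Neumann condition to obtain the exact mass identity $\int_{\Omega}\phi_{*}\,dx=-b|\Omega_{1}|$, hence $\|\phi_{*}\|_{L^{1}(\Omega_{1})}<\|\phi_{*}\|_{L^{1}(\Omega)}=b|\Omega_{1}|$ because $\phi_{*}<0$ on $\Omega_{0}\ne\emptyset$. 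With the rearrangement of $\lambda'(0)$ into three manifestly nonnegative terms this gives positivity in one stroke, with no case distinction. Your version is local (pointwise) and slightly heavier; the paper's version is global (an integral identity) and shorter, and it also makes clear that the protection zone $\Omega_{0}\ne\emptyset$ is precisely what yields the strict inequality. Both are valid.

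One small imprecision worth correcting: the statement ``the second block has trivial kernel iff $\tfrac{\mu+c\lambda}{1+\alpha\lambda}\neq0$'' is not literally true, since $\tfrac{\mu+c\lambda}{1+\alpha\lambda}$ could also coincide with a higher Neumann eigenvalue of $-\Delta$ on $\Omega_{1}$. What is true, and what you need, is that the kernel contains a \emph{positive} element only when $\tfrac{\mu+c\lambda}{1+\alpha\lambda}$ equals the principal eigenvalue $0$, i.e.\ only at $\lambda=\lambda_{0}$. This is the precise sense in which $\lambda_{0}$ is the unique bifurcation point for positive solutions along $\varGamma_{u}$, and it should replace the ``trivial kernel'' claim.
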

Next, we present results on the asymptotic behavior of positive solutions of
\eqref{sp2} as the directed population flux parameter $\alpha$ tends to infinity.
\begin{thm}\label{abthm}
Let $(u_{\alpha},v_{\alpha})$ be any positive solution of \eqref{sp2} 
for each $\alpha \ge 0$.
\begin{enumerate}[{\rm (i)}]
\item
Suppose that $(\lambda ,\mu )$ satisfies
$$
{\rm (A)}\ \ \lambda >\sigma_1 (b\mu\1_{\Omega_1}),\ \mu \ge 0
\quad \mbox{or}\quad {\rm (B)}\ \ \mu <0<\lambda .
$$
Then
$$
\lim_{\alpha \to \infty}(u_{\alpha},v_{\alpha})=(\lambda ,0)\ 
\mbox{in}\ C^1 (\overline{\Omega})\times C^1 (\overline{\Omega}_1 ).
$$
\item
Suppose that $(\lambda ,\mu )$ satisfies 
$0<\lambda \le \sigma_1 (b\mu\1_{\Omega_1})$ and $\mu >0$. 
Then there exists a sequence $\{ (u_{\alpha_k},v_{\alpha_k})\}_{k=1}^{\infty}$ 
with $\lim_{k\to \infty}\alpha_k =\infty$ such that either (a) or (b) occurs:
\begin{enumerate}[{\rm (a)}]
\item
$\displaystyle\lim_{k \to \infty}(u_{\alpha_k},v_{\alpha_k})=(\lambda ,0)\ 
\mbox{in}\ C^1 (\overline{\Omega})\times C^1 (\overline{\Omega}_1 )$.
\item
$\displaystyle\lim_{k \to \infty}(\alpha_k u_{\alpha_k},v_{\alpha_k})=(w_{\infty},v_{\infty})\ 
\mbox{in}\ C^1 (\overline{\Omega})\times C^1 (\overline{\Omega}_1 )$. \\
Here $(w_{\infty},v_{\infty})$ is a positive solution of
\begin{equation}\label{LP2}
\begin{cases}
\Delta w+w(\lambda -b\1_{\Omega_1}v)=0\ \ &\mbox{in}\ \Omega,\\
\Delta v+\dfrac{v}{1+w}\left\{ w(\lambda -bv)+\mu -v\right\} =0
\ \ &\mbox{in}\ \Omega_1 ,\\
\partial_n w=0\ \
\ \ &\mbox{on}\ \partial \Omega,\\
\partial_n v=0\ \
\ \ &\mbox{on}\ \partial \Omega_1
\end{cases}
\end{equation}
if $\lambda <\sigma_1 (b\mu\1_{\Omega_1})$, and 
$(w_{\infty},v_{\infty})=(0,\mu )$ if $\lambda =\sigma_1 (b\mu\1_{\Omega_1})$.
\end{enumerate}
\end{enumerate}
\end{thm}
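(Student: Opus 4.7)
The plan is to establish uniform a priori bounds, extract $C^1$-convergent subsequences, and analyse the limit via a dichotomy on the size of $u_\alpha$. First I would apply the maximum principle to the first equation in \eqref{sp2} to obtain $0<u_\alpha\le\lambda$ on $\overline{\Omega}$ uniformly in $\alpha$. For $v_\alpha$, evaluating the second equation of \eqref{sp2} at an interior maximum point $x_0$ forces the bracket
\[
\alpha u_\alpha(x_0)\bigl(\lambda-u_\alpha(x_0)-bv_\alpha(x_0)\bigr)+\mu+cu_\alpha(x_0)-v_\alpha(x_0)\ge 0,
\]
and a case analysis on the sign of $\lambda-u_\alpha(x_0)-bv_\alpha(x_0)$ yields $\|v_\alpha\|_{L^\infty(\Omega_1)}\le\max\{\lambda/b,\,\mu^++c\lambda\}$ uniformly in $\alpha$. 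Since $\alpha u/(1+\alpha u)$ and $1/(1+\alpha u)$ both lie in $[0,1]$, the right-hand sides of \eqref{sp2} are uniformly bounded in $L^\infty$. Standard $L^p$-Neumann elliptic regularity then yields uniform bounds in $W^{2,p}(\Omega)\times W^{2,p}(\Omega_1)$ for every $p<\infty$, hence $C^{1,\theta}$-precompactness.

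Next I would extract a subsequence $(u_{\alpha_k},v_{\alpha_k})\to(u_\infty,v_\infty)$ in $C^1$, and invoke the strong maximum principle on the $u$-equation to obtain the dichotomy $u_\infty\equiv 0$ or $u_\infty>0$ on $\overline{\Omega}$. In the case $u_\infty>0$, uniform positivity along the subsequence ensures $\alpha u_\alpha/(1+\alpha u_\alpha)\to 1$ and $1/(1+\alpha u_\alpha)\to 0$ uniformly, so passing to the limit in \eqref{sp2} gives
\[
\Delta u_\infty+u_\infty(\lambda-u_\infty-b\1_{\Omega_1}v_\infty)=0\ \text{in}\ \Omega,\qquad
\Delta v_\infty+v_\infty(\lambda-u_\infty-bv_\infty)=0\ \text{in}\ \Omega_1,
\]
with the respective Neumann conditions; crucially, $\mu$ drops out here. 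In the case $u_\infty\equiv 0$, I would rescale $w_\alpha:=\alpha u_\alpha$, which satisfies $\Delta w_\alpha+w_\alpha(\lambda-w_\alpha/\alpha-b\1_{\Omega_1}v_\alpha)=0$, and show via an analogous maximum-principle argument that $w_\alpha$ remains uniformly bounded along a further subsequence; then $w_{\alpha_k}\to w_\infty$ in $C^1$ and $(w_\infty,v_\infty)$ solves the limit system \eqref{LP2}.

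It remains to identify the limit in each parameter regime. When $u_\infty>0$, both $u_\infty|_{\Omega_1}$ and $v_\infty$ satisfy the same linear equation $\Delta w+qw=0$ in $\Omega_1$ with $q:=\lambda-u_\infty-bv_\infty$, so $u_\infty^2\nabla(v_\infty/u_\infty)$ is divergence-free in $\Omega_1$. Testing this against $v_\infty/u_\infty$ and using the Neumann conditions on $\partial\Omega$ and $\partial\Omega_0$ (where $\partial_n v_\infty=0$ while $\partial_n u_\infty$ need not vanish) yields
\[
\int_{\Omega_1}u_\infty^{2}|\nabla(v_\infty/u_\infty)|^{2}\,dx=-\int_{\partial\Omega_0}(v_\infty^{2}/u_\infty)\,\partial_n u_\infty\,dS.
\]
To recover the $\mu$-dependence I would combine this with the integral identity $\int_{\Omega_1}v_\alpha(\mu+cu_\alpha-v_\alpha)\,dx=\alpha\int_{\partial\Omega_0}v_\alpha\partial_n u_\alpha\,dS$ obtained by integrating the original second equation of \eqref{sp} over $\Omega_1$, together with Hopf-type sign information for $\partial_n u_\alpha$ on $\partial\Omega_0$ (coming from the subharmonicity of $\lambda-u_\alpha$ in $\Omega_0$) and the eigenvalue characterization of Lemma \ref{du-shi}. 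Under the hypotheses of (i), these force $v_\infty\equiv 0$, whence $u_\infty=\lambda$ by the uniqueness of the positive Neumann solution of $\Delta u+u(\lambda-u)=0$. The rescaled case is then excluded in (i) using the fact that $(\lambda,\mu)$ lies outside the existence region of positive solutions of \eqref{LP2} in that regime. In case (ii), the regime $0<\lambda\le\sigma_1(b\mu\1_{\Omega_1})$ with $\mu>0$ accommodates both alternatives (a) and (b); the degenerate case $\lambda=\sigma_1(b\mu\1_{\Omega_1})$ corresponds to the trivial bifurcation point $(w_\infty,v_\infty)=(0,\mu)$ of \eqref{LP2}.

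The hard part will be the exclusion of a nontrivial coexistence limit in case (i) when $u_\infty>0$: because $\mu$ disappears from the limit equation, the $\mu$-dependent conditions must be propagated through integral identities at the $\alpha$-level before passing to the limit. Controlling the boundary integral on $\partial\Omega_0$ and extracting a definite sign from $\partial_n u_\alpha$ there is the delicate point; once this is secured, the coercivity of $u_\infty^{2}|\nabla(v_\infty/u_\infty)|^{2}$ forces $v_\infty/u_\infty$ to be constant in $\Omega_1$, and further refinement using continuity across $\partial\Omega_0$ then gives $v_\infty\equiv 0$. A secondary obstacle is handling the possible blow-up of $w_\alpha$ in the case $u_\infty\equiv 0$, which may necessitate an additional normalization step beyond the simple rescaling $w_\alpha=\alpha u_\alpha$.
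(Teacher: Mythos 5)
Your global framework---uniform $L^\infty$ and $W^{2,p}$ bounds, $C^1$-precompactness, subsequential limits, maximum-principle dichotomy, identification of the limit system---agrees with the paper's. But there are two substantial gaps at the crux of the argument. First, your dichotomy is on whether $u_\infty\equiv 0$ or $u_\infty>0$, and in the $u_\infty\equiv 0$ case you assert that $w_\alpha:=\alpha u_\alpha$ stays bounded ``via an analogous maximum-principle argument.'' That claim fails: the equation for $w_\alpha$ is $\Delta w_\alpha+w_\alpha(\lambda-u_\alpha-b\1_{\Omega_1}v_\alpha)=0$, which is \emph{linear and homogeneous} in $w_\alpha$, so evaluating it at a maximum point gives a sign condition on the potential but no bound whatsoever on $\|w_\alpha\|_\infty$. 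The paper instead dichotomizes directly on boundedness of $\alpha_k\|u_k\|_\infty$ (Lemma \ref{abthm-l4}); in the unbounded case it uses a generalized Harnack inequality for $\alpha_k u_k$ to propagate unboundedness down to $\min\alpha_k u_k$, passes to the limiting system \eqref{LP1}, and then rules out the limits $(0,0)$ and $(0,\lambda/b)$ by explicit integral identities. That structure is not recoverable from your ``analogous maximum-principle argument,'' and your own closing remark about ``possible blow-up of $w_\alpha$'' tacitly acknowledges the hole without filling it.

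Second, in the case $u_\infty>0$ you propose to exclude a coexistence limit via the divergence-free vector field $u_\infty^2\nabla(v_\infty/u_\infty)$ and the identity $\int_{\Omega_1}u_\infty^2|\nabla(v_\infty/u_\infty)|^2\,dx=-\int_{\partial\Omega_0}(v_\infty^2/u_\infty)\partial_n u_\infty\,dS$, which needs a definite sign for $\partial_n u_\infty$ on all of $\partial\Omega_0$. Hopf's lemma applied to the subharmonic $\lambda-u_\infty$ in $\Omega_0$ gives that sign only at a \emph{boundary extremum} point, not pointwise along $\partial\Omega_0$, and you do not establish the pointwise sign. The paper's Lemma \ref{abthm-l1} avoids this entirely: multiplying the $u$-equation of \eqref{LP1} by $\lambda/u-1$, integrating over $\Omega$ (where the Neumann condition kills the boundary term), substituting the identity $b\int_{\Omega_1}v(\lambda-u)\,dx=b^2\int_{\Omega_1}v^2\,dx$ obtained from integrating the $v$-equation, and completing the square yields $0\ge\lambda\int_\Omega|\nabla u|^2/u^2+\int_{\Omega_0}(\lambda-u)^2$, forcing $u\equiv\lambda$ and $v\equiv 0$ with no boundary-sign input. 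You should replace your boundary-flux argument with this multiplier argument. Finally, note that to pin down the parameter constraints in alternative (b) of part (ii) (namely $\mu>0$ and $\lambda<\sigma_1(b\mu\1_{\Omega_1})$ when $w_\infty>0$), the paper also needs the a priori estimates of Lemma \ref{abthm-l2} for positive solutions of \eqref{LP2}; your proposal does not supply an analogue.
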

According to Theorem \ref{abthm}, when the directed population flux 
$\alpha$ representing the tendency of predator to move toward regions with 
abundant prey is taken to be very large, every coexistence steady state of \eqref{P} asymptotically collapses to the semitrivial state $(\lambda,0)$. 
In biological terms, this means that, although a strong directed movement toward prey might appear advantageous for the predator, an excessively large flux ultimately drives the predator to extinction while the prey survives alone.
It should also be noted that, in parameter regimes $(\lambda,\mu)$ that lie to the left of the curve $\lambda=\sigma_{1}(b\mu\,\1_{\Omega_{1}})$ 
and, at the same time, admit multiple positive steady states, 
an additional phenomenon may occur. Besides convergence to the semitrivial state $(\lambda,0)$, the system may alternatively approach a small positive prey-only equilibrium of order $1/\alpha$, which is characterized by the 
limiting problem \eqref{LP2}. This highlights a subtle interplay between directed 
movement and the refuge structure, producing biologically intriguing 
alternative asymptotic states.

The following theorem shows the global bifurcation structure of positive solutions of
the limiting system \eqref{LP2}, see Figure 3.
\begin{figure}
  \centering
  \includegraphics[width=\textwidth]{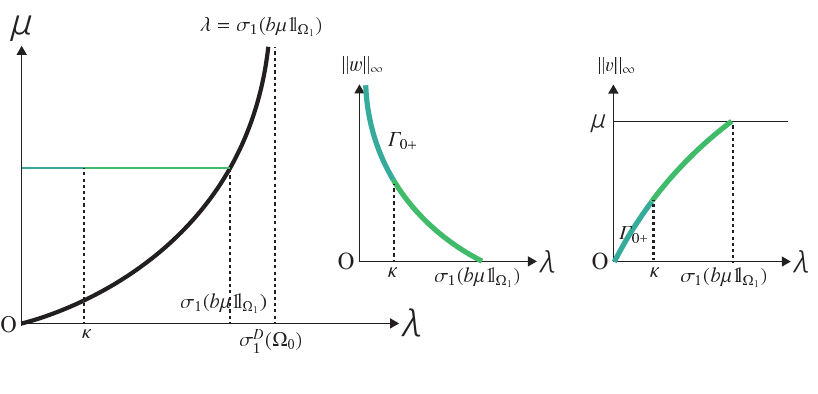}
  \caption{(Left) Graph of $\lambda = \sigma_{1}(b\mu\1_{\Omega_{1}})$ in the $\lambda\mu$ 
  plane. (Right) Typical bifurcation diagram expected in Theorem 3.5.}
  \label{fig3}
\end{figure}

\begin{thm}\label{LP2thm}
Suppose that $\mu >0$. 
Positive solutions of \eqref{LP2} with parameter $\lambda$ bifurcate from 
$(w,v)=(0,\mu )$ if and only if $\lambda =\sigma_1 (b\mu\1_{\Omega_1} )$. 
Moreover, in $\mathbb{R}\times X$, the connected set $\varGamma$ of positive solutions of \eqref{LP2} 
which bifurcates from $(\lambda ,w,v)=(\sigma_1 (b\mu\1_{\Omega_1} ),0,\mu )$ is 
uniformly bounded for $(\lambda , v)$ and unbounded with respect to $w$. 
Furthermore, 
$\varGamma$ contains an unbounded curve $\varGamma_{0+}$ 
parameterized by $\lambda\in (0,\kappa )$ with some small 
$\kappa\in (0, \sigma_1 (b\mu\1_{\Omega_1} ))$
as follows:
\begin{align*}
&\varGamma_{0+}=\\
&\biggl\{(\lambda, w(\lambda ), v(\lambda))\in (0,\kappa)\times X\,:\, 
w(\lambda )=\dfrac{s(\lambda )}{\lambda}+\phi (\lambda ),\ 
v(\lambda )=\lambda (\,t(\lambda )+\lambda \psi (\lambda )\,)\biggr\},
\end{align*}
where
$(s(\lambda ), t(\lambda ), \phi (\lambda ), \psi (\lambda ))
\in \mathbb{R}^{2}\times X$
are continuously differentiable functions for
$\lambda\in [0,\kappa)$
satisfying
$\int_{\Omega}\phi (\lambda )\,dx=
\int_{\Omega_1}\psi (\lambda )\,dx=0$
for all $\lambda\in [0,\kappa)$,
and
\[
(s(0), t(0))=\biggl(
\mu\dfrac{|\Omega_1|}{|\Omega_0|},
\dfrac{|\Omega |}{b|\Omega_1|}\biggr),\ \ 
\phi(0)=\mu (-\Delta)^{-1}_{\Omega,\#}
\biggl(\dfrac{|\Omega_1|}{|\Omega_0|}\,\1_{\Omega_0}-
\1_{\Omega_1}\biggr),\ \ 
\psi (0)=0,\]
where $(-\Delta )^{-1}_{\Omega,\sharp}$ denotes the inverse of 
$-\Delta$ on $\Omega$ under the homogeneous Neumann boundary condition on 
$\partial\Omega$, restricted to the subspace of functions with zero mean value.
\end{thm}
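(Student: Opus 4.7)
\medskip

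\noindent\textbf{Proof plan.}
The argument naturally splits into three parts: (i) a Crandall--Rabinowitz local bifurcation at $\lambda=\sigma_1(b\mu\1_{\Omega_1})$ from the semitrivial solution $(w,v)=(0,\mu)$; (ii) a Rabinowitz global continuation of this local branch combined with a priori bounds ruling out unboundedness in $\lambda$ or $v$; and (iii) a singular implicit function argument producing the unbounded curve $\varGamma_{0+}$ near $\lambda=0$.

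For parts (i)-(ii), I would linearize \eqref{LP2} at $(w,v)=(0,\mu)$: the linearization decouples triangularly, with $-\Delta W+b\1_{\Omega_1}\mu W=\lambda W$ in $\Omega$ under Neumann conditions for the $w$-component and $-\Delta V+\mu V=\mu(\lambda-b\mu)W$ in $\Omega_1$ for the $v$-component. Since $-\Delta+\mu$ is invertible under Neumann conditions on $\Omega_1$, the kernel is one-dimensional precisely when $\lambda=\sigma_1(b\mu\1_{\Omega_1})$, spanned by $\phi^*$ together with the determined $V$, and a standard transversality check yields the local branch. Reformulating \eqref{LP2} as a compact fixed-point problem via the Neumann resolvents then allows Rabinowitz's global alternative to extend this to a connected set $\varGamma$ of positive solutions. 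For the a priori bounds, along any positive solution the first equation identifies $\lambda$ as the Neumann principal eigenvalue $\sigma_1(b\1_{\Omega_1}v,\Omega)$ with positive eigenfunction $w$; monotonicity combined with Lemma \ref{du-shi} applied to $b\|v\|_\infty\1_{\Omega_1}$ yields $\lambda<\sigma_1^D(\Omega_0)$, while evaluating the second equation at an interior maximum of $v$ gives $w(\lambda-bv)+\mu-v\geq 0$, hence $v\leq\max\{\mu,\lambda/b\}\leq\max\{\mu,\sigma_1^D(\Omega_0)/b\}$. The Rabinowitz alternative then forces $\|w\|_X\to\infty$ along $\varGamma$.

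For part (iii), I would substitute the ansatz $w=s/\lambda+\phi$, $v=\lambda t+\lambda^2\psi$ with the mean-zero constraints $\int_\Omega\phi\,dx=\int_{\Omega_1}\psi\,dx=0$, multiply the $v$-equation by $\lambda^{-2}$ to absorb the singular factor $v/(1+w)$, and split each resulting equation into its mean-zero part and its scalar compatibility condition. This produces a smooth map
\[
\Phi:(-\delta_0,\delta_0)\times\mathbb{R}^2\times Y_\Omega\times Y_{\Omega_1}\longrightarrow L^p_0(\Omega)\times\mathbb{R}\times L^p_0(\Omega_1)\times\mathbb{R},
\]
where $Y_\Omega,Y_{\Omega_1}$ and $L^p_0(\Omega),L^p_0(\Omega_1)$ denote the mean-zero subspaces of $W^{2,p}_n$ and $L^p$. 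At $\lambda=0$ the two scalar compatibility conditions reduce to $bt|\Omega_1|=|\Omega|$ and $s(1-bt)+\mu=0$, which force the stated values of $(s(0),t(0))$; inverting the Neumann Laplacian on the zero-mean subspace of $\Omega$ then yields the explicit $\phi(0)$, and $\psi(0)\equiv 0$. A direct computation shows that $D_{(s,t,\phi,\psi)}\Phi$ at this base point is block triangular, with a scalar $2\times 2$ block of determinant $-\mu b t(0)|\Omega_1|^2/s(0)\neq 0$ and two diagonal blocks given by the Laplacians on the mean-zero subspaces, which are isomorphisms. The implicit function theorem then produces the smooth curve $\lambda\mapsto(s(\lambda),t(\lambda),\phi(\lambda),\psi(\lambda))$ on some interval $[0,\kappa)$, delivering $\varGamma_{0+}$.

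The main technical difficulty is the singular rescaling near $\lambda=0$: the factor $v/(1+w)$ in the second equation hides a cancellation of several small parameters, so each $\lambda$-expansion must be tracked carefully, and the discontinuity of $\1_{\Omega_1}$ prevents $w$ from being $C^2$ across $\partial\Omega_0$, forcing the entire analysis to take place in $W^{2,p}_n$ with $p>N$. A secondary issue is to verify that the curve $\varGamma_{0+}$ produced by the implicit function theorem actually lies in the global continuum $\varGamma$, which I would establish by rescaling any sequence $(\lambda_n,w_n,v_n)\in\varGamma$ with $\|w_n\|_X\to\infty$ into the $(s,t,\phi,\psi)$-decomposition, and invoking both the uniform boundedness of $(\lambda_n,v_n)$ and the local uniqueness from the implicit function theorem to match accumulation points with $\varGamma_{0+}$.
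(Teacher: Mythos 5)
Your overall architecture matches the paper's proof closely: Crandall--Rabinowitz local bifurcation at $\lambda=\sigma_1(b\mu\1_{\Omega_1})$ (the triangular linearization and transversality you describe are correct), global Rabinowitz continuation, a priori bounds forcing unboundedness into the $w$-component, and a singular rescaling $\widetilde w=\lambda w$, $\widetilde v=v/\lambda$ with mean/mean-zero splitting followed by the implicit function theorem at the explicit base point $(s_0,t_0,\phi_0,0)$. Your Jacobian determinant $-\mu b\,t(0)|\Omega_1|^2/s(0)$ equals the paper's $-b\,t_0|\Omega_0||\Omega_1|$ after substituting $s_0=\mu|\Omega_1|/|\Omega_0|$, so the nondegeneracy computation agrees.

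The genuine gap is in what you call the ``secondary issue.'' Your plan to ``rescale any sequence $(\lambda_n,w_n,v_n)\in\varGamma$ with $\|w_n\|_X\to\infty$ into the $(s,t,\phi,\psi)$-decomposition and invoke local uniqueness'' presupposes precisely what is hardest to prove: that along such a sequence $\lambda_n\to 0$, and that the rescaled quantities $\lambda_n w_n$ and $v_n/\lambda_n$ actually land in the small IFT neighborhood of $(s_0,t_0)$. Nothing in Rabinowitz's alternative or the uniform $(\lambda,v)$ bound guarantees $\lambda_n w_n$ stays bounded away from $0$ and $\infty$. The paper establishes this in two dedicated lemmas: first (the analogue of your bound on $v$) a two-sided Harnack-type estimate $\lambda/b\le v\le C_H\lambda/b$ derived from the integral identity $b\int_{\Omega_1}vw=\lambda\int_\Omega w$ and Harnack inequalities (including a generalized Harnack for the discontinuous potential across $\partial\Omega_0$); and second, a contradiction argument using $\int_{\Omega_1}\widetilde K(x,\lambda)\widetilde v\,dx=0$ combined with the $L^\infty$-normalization $\widehat w=w/\|w\|_\infty\to 1$ to exclude both $\limsup\lambda\|w\|_\infty=\infty$ and $\liminf\lambda\|w\|_\infty=0$. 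These estimates are the real content of the ``matching'' step, not a technicality; without them your local-uniqueness argument does not close, because the accumulation points you hope to match could a priori fall outside the IFT domain. In short: the route is right, but you must first prove the blow-up rate $w(\lambda)\asymp 1/\lambda$ uniformly on $\overline\Omega$ and the comparability $v(\lambda)\asymp\lambda$ before the implicit function theorem can identify the tail of $\varGamma$ with $\varGamma_{0+}$.
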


\begin{figure}
  \centering
  \includegraphics[width=\textwidth]{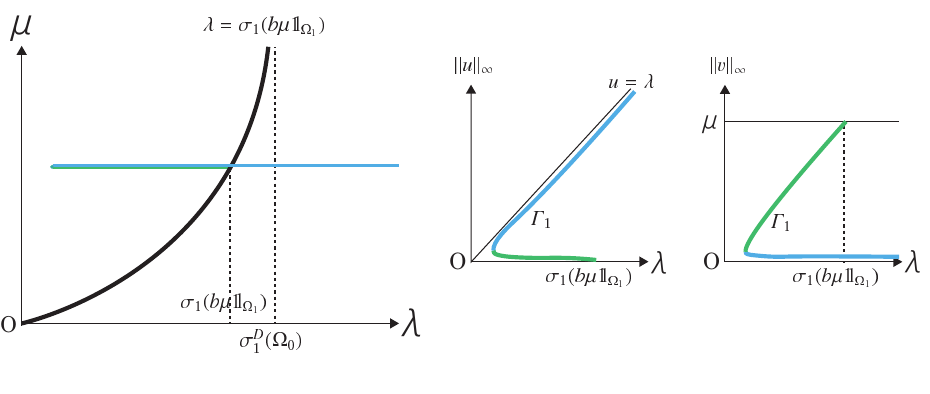}
  \caption{A conjectured bifurcation diagram of positive solutions of \eqref{sp} in the regime of large directional movement strength.}
  \label{fig4}
\end{figure}
Theorem \ref{LP2thm} shows that positive solutions of \eqref{LP2} bifurcate from the
same point $\lambda=\sigma_{1}(b\mu\,\1_{\Omega_1})$ as in the original
system \eqref{sp2}, but for the limiting problem the bifurcating branch extends
in the direction $\lambda\to 0^{+}$.  
When $\lambda>0$ is sufficiently small, the branch parametrized by $\lambda$
exhibits the following asymptotic behavior:  
the $v$-component converges to $0$, whereas the $w$-component grows at
order $1/\lambda$.  
In this sense, $\lambda=0$ plays the role of a bifurcation from
infinity in the limiting problem.
With the scaling $\alpha u\to w$ in mind, this suggests the following
interpretation for the original system \eqref{sp2}.  
When the directed population flux $\alpha$ is large, the saddle-node
bifurcation appearing in \eqref{sp2} can be viewed as a perturbation of the
bifurcation-from-infinity structure observed in the limit problem.  
From a biological perspective, as $\lambda\to 0$, the predator density
$v$ tends toward extinction, while the rescaled prey density $w$ becomes
large, see Figure 4.
Thus, the limit problem offers a useful viewpoint for understanding how
strong directed movement modifies the structure of coexistence steady states in
the original model \eqref{P}.

In our previous papers \cite{OK2018, KO2021}, we investigated the stationary problem of \eqref{P} in the absence of a protection zone (i.e., $\Omega_{0}=\emptyset$) under Dirichlet boundary conditions.
It was shown that, for large values of $\alpha$, the coexistence region in the $\lambda\mu$ plane expands and that the global bifurcation branch of steady states exhibits a turning point.
However, in such a Dirichlet problem, the minimum growth rate required for prey survival (corresponding to the turning point) decreases only marginally, even in the limit $\alpha \to \infty$.
More recently, for \eqref{P} without a protection zone, Wang \cite{Wang2026}
analyzed the stationary problem under Neumann boundary conditions with spatially heterogeneous coefficients $b$ and $c$.
Her results indicate that the effect of $\alpha>0$ does not lead to a substantial expansion of the coexistence region.
In contrast, our present results suggest that the pronounced expansion of the coexistence region in the $\lambda\mu$ plane arises from a synergistic effect between the directed population flux parameter 
$\alpha$ and the presence of the protection zone $\Omega_{0}$.

In \cite{LGMH20,LGMH24}, the effects of spatial heterogeneities on the bifurcation branches
of coexistence steady states in saturation predator--prey models were investigated,
and bifurcation branches with a similar qualitative shape were also observed.
However, in that setting, additional bifurcation points appear along those branches,
depending on the spatial modes, as shown in \cite{KLMH26}.

\section{Proofs of Theorems \ref{exthm1} and \ref{exthm2}}
\subsection{A priori estimates and the nonexistence region for
positive stationary solutions}
As a beginning of this section, we note fundamental but useful inequalities for deriving
a priori estimates of positive solutions of \eqref{sp}:
If $q>0$ and $r,\,s\in\mathbb{R}$, then
\begin{equation}\label{qrs}
\min \left\{ \frac{r}{q},\, s\right\} \le\frac{r\xi +s}{q\xi +1}\le\max \left\{ \frac{r}{q},\, s\right\} 
\end{equation}
for any $\xi \ge 0$.
The following lemma gives a uniform $W^{2,p}$ bound of any positive solution of \eqref{sp}
independent of $\alpha$.
\begin{lem}\label{ape}
Let $(u,v)$ be any positive solution of \eqref{sp}. Then, for any $p>N$ and $\theta \in (0,1)$, 
there exist positive constants $C_i \, (i=1,2,3,4)$ independent of $\alpha$ such that
$$
\| u \|_{W^{2,p}(\Omega )}\le C_1 ,\quad \| v \|_{W^{2,p}(\Omega_1 )}\le C_2
$$
and
$$
\| u\|_{C^{1,\theta}(\overline{\Omega})}\le C_3 ,\quad 
\| v\|_{C^{1,\theta}(\overline{\Omega}_1 )}\le C_4 .
$$
\end{lem}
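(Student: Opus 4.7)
The plan is first to establish uniform $L^{\infty}$ bounds on $u$ and $v$ that are independent of the flux parameter $\alpha$, and then to promote these to the stated $W^{2,p}$ and $C^{1,\theta}$ estimates via standard elliptic $L^{p}$ theory and Sobolev embedding.

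The $L^{\infty}$ bound on $u$ is immediate. Rewriting the first equation of \eqref{sp} as $-\Delta u = u(\lambda - u - b\1_{\Omega_{1}} v)$ and evaluating at a maximum point of $u$ on $\overline{\Omega}$—where the Neumann condition $\partial_{n} u = 0$ ensures the usual sign $\Delta u \le 0$ even for boundary maxima—the nonnegativity of $v$ and of $b\1_{\Omega_{1}}$ yields $\|u\|_{L^{\infty}(\Omega)} \le \lambda$, trivially independent of $\alpha$.

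The crucial step is the uniform $L^{\infty}$-bound on $v$; here I would work with the rewritten equation from \eqref{sp2},
\[
-\Delta v = \frac{v}{1+\alpha u}\bigl\{\alpha u(\lambda - u - bv) + \mu + cu - v\bigr\}\qquad\text{in }\Omega_{1},
\]
and evaluate at a maximum point $x^{*}\in\overline{\Omega}_{1}$ of $v$. Since $u>0$ in $\Omega$ by Bony's strong maximum principle applied to the first equation, and $\Delta v(x^{*})\le 0$ (the Neumann condition $\partial_{n}v=0$ disposing of the case $x^{*}\in\partial\Omega_{1}$ via Hopf's boundary lemma), one obtains
\[
v(x^{*})\bigl(1 + \alpha b\, u(x^{*})\bigr) \le \alpha u(x^{*})\bigl(\lambda - u(x^{*})\bigr) + \mu + c u(x^{*}).
\]
Applying the elementary inequality \eqref{qrs} with $q=bu(x^{*})$, $r = u(x^{*})(\lambda - u(x^{*}))$, $s = \mu + cu(x^{*})$, and $\xi = \alpha$ causes the $\alpha$-dependence to cancel and produces
\[
v(x^{*}) \le \max\Bigl\{\frac{\lambda - u(x^{*})}{b},\ \mu + c u(x^{*})\Bigr\} \le \max\Bigl\{\frac{\lambda}{b},\ \mu + c\lambda\Bigr\}.
\]
I expect this $\alpha$-free estimate on $v$ to be the principal technical obstacle: any direct bound starting from the second equation of \eqref{sp} inherits a factor of $\alpha$, and it is precisely the algebraic structure captured by \eqref{qrs} that removes it.

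With $\|u\|_{L^{\infty}(\Omega)}$ and $\|v\|_{L^{\infty}(\Omega_{1})}$ controlled uniformly in $\alpha$, I would conclude by standard elliptic bootstrapping. For $u$, the right-hand side $-u(\lambda - u - b\1_{\Omega_{1}}v)$ lies in $L^{\infty}(\Omega)$ with an $\alpha$-independent bound—the discontinuity of $\1_{\Omega_{1}}$ being harmless at the $L^{p}$ level—so Calder\'on--Zygmund theory for the Neumann Laplacian on the smooth domain $\Omega$ yields $\|u\|_{W^{2,p}(\Omega)}\le C_{1}$ for every $p\in(1,\infty)$. For $v$, splitting the right-hand side of its equation as
\[
v\cdot\frac{\alpha u}{1+\alpha u}(\lambda - u - bv)\ +\ \frac{v(\mu + cu - v)}{1+\alpha u}
\]
and using the uniform bounds $0\le \alpha u/(1+\alpha u)<1$ and $(1+\alpha u)^{-1}\le 1$, the full right-hand side stays bounded in $L^{\infty}(\Omega_{1})$ uniformly in $\alpha$; hence $\|v\|_{W^{2,p}(\Omega_{1})}\le C_{2}$ follows by the same $L^{p}$-theory on the smooth domain $\Omega_{1}$. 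The $C^{1,\theta}$ estimates with constants $C_{3},C_{4}$ then come from the Sobolev embedding $W^{2,p}(D)\hookrightarrow C^{1,\theta}(\overline{D})$ with $\theta = 1-N/p$, which, upon choosing $p$ large enough, covers every $\theta\in(0,1)$.
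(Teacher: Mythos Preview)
Your proof is correct and follows essentially the same approach as the paper: obtain $\|u\|_{L^\infty}\le\lambda$ by comparison, use the maximum principle on the rewritten $v$-equation together with \eqref{qrs} to get an $\alpha$-free $L^\infty$ bound on $v$, and then invoke $L^p$ elliptic regularity and Sobolev embedding. The only cosmetic difference is that the paper applies \eqref{qrs} a second time (see \eqref{pfape4}) to bound the right-hand side of the $v$-equation, whereas your splitting via $0\le \alpha u/(1+\alpha u)<1$ and $(1+\alpha u)^{-1}\le 1$ achieves the same thing more directly.
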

\begin{proof}
Since $u$ satisfies $\Delta u+u(\lambda -u)\ge 0$ in $\Omega$, 
a standard comparison argument yields
\begin{equation}\label{pfape1}
\max_{\overline{\Omega}}u\le \lambda .
\end{equation}
Let $x_{0}\in\overline{\Omega}_{1}$ be a maximum point of $v$, so that
$v(x_{0})=\max_{\overline{\Omega}_{1}}v$.
Then
applying the maximum principle (e.g., \cite[Proposition 2.1]{LN1}) 
to the second equation of \eqref{sp2}, we have
\begin{equation}\label{pfape2}
\alpha u(x_0 )(\lambda -u(x_0)-bv(x_0 ))+\mu +cu(x_0 )-v(x_0 )\ge 0,
\end{equation}
where $v(x_0)=\max_{\overline{\Omega}_1}v$ with $x_0 \in \overline{\Omega}_1$. 
By \eqref{qrs}-\eqref{pfape2}, we obtain
\begin{equation}\label{pfape3}
\begin{split}
v(x_0 )&\le \frac{\alpha u(x_0 )(\lambda -u(x_0))+\mu +cu(x_0 )}{b\alpha u(x_0 )+1}\\
&<\frac{\lambda \alpha u(x_0 )+\mu +c\lambda}{b\alpha u(x_0 )+1}
\le \max \left\{ \frac{\lambda}{b},\, \mu +c\lambda \right\} .
\end{split}
\end{equation}
By \eqref{qrs}, we also have
\begin{equation}\label{pfape4}
\begin{split}
\min \{ \lambda -u-bv,\, \mu +cu-v\}&\le 
\dfrac{(\lambda -u-bv)\alpha u+\mu +cu-v}{\alpha u+1}\\
&\le \max \{ \lambda -u-bv,\, \mu +cu-v\} .
\end{split}
\end{equation}
Thus we see from \eqref{pfape1}, \eqref{pfape3} and \eqref{pfape4} 
that for any $p>N$, there exist two positive constants 
$\tilde{C}_1$ and $\tilde{C}_2$ independent of $\alpha$ such that
$$
\| u(\lambda -u-b\1_{\Omega_1}v)\|_{L^p (\Omega )}+\| u\|_{L^p (\Omega )}\le \tilde{C}_1
$$
and
$$
\left\| \frac{v}{1+\alpha u}\left\{ \alpha u(\lambda -u-bv)+\mu +cu-v\right\} \right\|
_{L^p (\Omega_1 )}+\| v\|_{L^p (\Omega_1 )}\le \tilde{C}_2 .
$$
Therefore, the conclusion of Lemma \ref{ape} follows 
from elliptic regularity theory and the Sobolev embedding theorem.
\end{proof}
Building upon the a priori estimate for positive solutions 
obtained in Lemma \ref{ape},
we next employ this bound as a basis 
for deriving sufficient conditions ensuring
the nonexistence of positive solutions. 
In particular, our aim is to characterize
a sufficient nonexistence region in the $(\lambda,\mu)$-plane 
and to investigate
its dependence on $\alpha$. Identifying such a nonexistence
(sufficient) region provides important guidance for the subsequent analysis 
of
the existence of positive solutions and 
of the global bifurcation structure.

To derive the boundary curve that determines 
the sufficient nonexistence region, we first establish the following lemma,
see also Figure 5.
\begin{lem}\label{implicitlem}
Define the function $K(\lambda, \mu, \alpha)$ by
\[
K(\lambda, \mu, \alpha ):=
\lambda-\sigma_{1}\left(
\dfrac{b(c\lambda +\mu )}{\alpha b\lambda +1}\1_{\Omega_1}\right).
\]
Then for any $\alpha >0$ and $\mu>c/(\alpha b)$,
there exists a unique $\ell (\mu, \alpha )>0$ such that
\[
K(\ell (\mu, \alpha ), \mu, \alpha )=0.\]
Furthermore, $\ell (\mu, \alpha )$ is continuously differentiable
for 
$(\mu, \alpha )\in (c/(\alpha b), \infty)\times (0, \infty )$
and satisfies the following properties:
\begin{align*}
&\dfrac{\partial\ell}{\partial\mu}(\mu, \alpha )>0
\quad\mbox{for all}\ 
(\mu, \alpha )\in \left(\dfrac{c}{\alpha b},\infty\right)\times
(0,\infty),\\
&\ell(\mu, \alpha)\searrow\sigma_{1}\left(\dfrac{c}{\alpha}\1_{\Omega_1}\right)\quad\mbox{as}\ \mu\searrow\dfrac{c}{\alpha b},\\
&\ell(\mu, \alpha )\nearrow\sigma_{1}^{D}(\Omega_0)\quad\mbox{as}\ 
\mu\nearrow\infty\ \mbox{for any}\ \alpha>0,\\
&\ell(\,\cdot\,,\alpha)\searrow 0\quad
\mbox{uniformly in any compact subset of $(0,\infty)$ as 
$\alpha\nearrow\infty$.}
\end{align*}
\end{lem}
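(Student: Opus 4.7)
The plan is to view $K(\lambda,\mu,\alpha)=0$ as an implicit equation in $\lambda$ and to combine the intermediate value theorem, the monotonicity property (iii) of $\sigma_{1}$ in its weight, and the implicit function theorem. The central auxiliary object is the weight
\[
q(\lambda;\mu,\alpha):=\frac{b(c\lambda+\mu)}{\alpha b\lambda+1}.
\]
A direct computation gives $\partial_{\lambda}q=b(c-\alpha b\mu)/(\alpha b\lambda+1)^{2}$, so the hypothesis $\mu>c/(\alpha b)$ is precisely the condition that $q(\,\cdot\,;\mu,\alpha)$ is strictly decreasing on $[0,\infty)$, running from $q(0)=b\mu$ down to $q(\infty)=c/\alpha$. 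By property (iii), the map $\lambda\mapsto\sigma_{1}(q(\lambda)\1_{\Omega_{1}})$ is then strictly decreasing and bounded, so $K$ is strictly increasing in $\lambda$, with $K(0,\mu,\alpha)=-\sigma_{1}(b\mu\1_{\Omega_{1}})<0$ and $K(\lambda,\mu,\alpha)\to\infty$ as $\lambda\to\infty$. The intermediate value theorem then yields a unique root $\ell(\mu,\alpha)>0$.

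For the $C^{1}$-regularity, I would use the standard simple-eigenvalue perturbation formula to see that $\sigma_{1}(q\1_{\Omega_{1}})$ depends smoothly on $(\lambda,\mu,\alpha)$ through $q$, with $\partial_{\lambda}K=1-\int_{\Omega_{1}}(\partial_{\lambda}q)(\phi^{*})^{2}\,dx\ge 1>0$ at the root. The implicit function theorem then delivers $\ell\in C^{1}$ jointly. The inequality $\partial_{\mu}\ell>0$ follows immediately from $\partial_{\mu}q=b/(\alpha b\lambda+1)>0$ via $\partial_{\mu}\ell=-\partial_{\mu}K/\partial_{\lambda}K$, and an analogous calculation using $\partial_{\alpha}q=-b^{2}\lambda(c\lambda+\mu)/(\alpha b\lambda+1)^{2}<0$ for $\lambda>0$ gives $\partial_{\alpha}\ell<0$; this latter monotonicity will streamline the $\alpha\to\infty$ limit.

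The three limits are handled separately. At $\mu=c/(\alpha b)$ the weight collapses to the constant $c/\alpha$ independently of $\lambda$, so the defining equation reduces to $\ell=\sigma_{1}((c/\alpha)\1_{\Omega_{1}})$ at this endpoint, and continuity of $\ell$ combined with its $\mu$-monotonicity yields the first limit in the claimed direction. For $\mu\to\infty$, the a priori bound $\ell=\sigma_{1}(q(\ell)\1_{\Omega_{1}})\le\sigma_{1}(b\mu\1_{\Omega_{1}})<\sigma_{1}^{D}(\Omega_{0})$ (from $q(\ell)\le q(0)=b\mu$ together with Lemma~\ref{du-shi}) combined with $\mu$-monotonicity forces $\ell\nearrow L\le\sigma_{1}^{D}(\Omega_{0})$; if $L<\sigma_{1}^{D}(\Omega_{0})$ were strict, then $q(\ell)\ge b\mu/(\alpha b L+1)\to\infty$, and Lemma~\ref{du-shi} would give $\sigma_{1}(q(\ell)\1_{\Omega_{1}})\to\sigma_{1}^{D}(\Omega_{0})$, contradicting $\ell\to L$. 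Finally, for $\alpha\to\infty$ on a compact set $K_{0}\subset(0,\infty)$, the $\mu$-monotonicity gives $\sup_{\mu\in K_{0}}\ell(\mu,\alpha)=\ell(\max K_{0},\alpha)$, so uniform convergence reduces to pointwise convergence at a single $\mu$; the latter is obtained by contradiction: if $\ell(\mu,\alpha_{k})\ge\varepsilon$ along some $\alpha_{k}\to\infty$, then $q(\ell)\le C/(\alpha_{k}b\varepsilon)\to 0$, forcing $\sigma_{1}(q(\ell)\1_{\Omega_{1}})\to 0$ and contradicting $\ell\ge\varepsilon$.

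The main technical subtlety will be the $\mu\to\infty$ limit: the implicit weight $q(\ell)$ blows up, yet one must exactly identify the limit of $\ell$ as the sharp upper bound $\sigma_{1}^{D}(\Omega_{0})$, which can only be pinned down using the precise asymptotic recorded in Lemma~\ref{du-shi}. All remaining steps are routine consequences of eigenvalue monotonicity, the implicit function theorem, and elementary monotonicity arguments.
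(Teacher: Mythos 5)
Your proposal is correct and follows essentially the same route as the paper: monotonicity of $q$ in $\lambda$ to get strict monotonicity of $K$, the intermediate value theorem for existence/uniqueness, the implicit function theorem for $C^1$ regularity, implicit differentiation for the sign of $\partial_\mu\ell$, and contradiction arguments for the two asymptotic limits (the paper's $\mu\to\infty$ step is stated more tersely, but the underlying argument is the one you spell out). The only cosmetic difference is that you give the explicit Hadamard-type perturbation formula $\partial_\lambda K=1-\partial_\lambda q\int_{\Omega_1}(\phi^*)^2\,dx$, whereas the paper simply invokes Yamada's $C^1$-dependence result and the chain rule.
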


\begin{proof}
By virtue of Lemma \ref{du-shi},
the function $\sigma_{1}(\xi \1_{\Omega_1})$ is monotone increasing
for $\xi\in (0,\infty )$ and satisfies
\begin{equation}\label{sigmalim}
\lim_{\xi\to 0^+}\sigma_1(\xi\1_{\Omega_1})=0
\quad\mbox{and}\quad
\lim_{\xi\to\infty}\sigma_1(\xi \1_{\Omega_1})=
\sigma^{D}_{1}(\Omega_0).
\end{equation}
In addition, we note that
$\sigma_{1}(\xi\1_{\Omega_1})$ is continuously differentiable
for $\xi>0$.
The proof of the $C^{1}$-regularity can be found, for instance, in 
Yamada~\cite[Proposition 1.1\,(iii)]{Yam}. 
In his work, a continuous function, depending continuously 
on the parameter, 
is introduced as a potential function. Nevertheless,
the proof remains valid in the above setting,
where the potential is given 
by the step function $\xi\1_{\Omega_1}$.
It follows from the chain rule that
$K(\lambda, \mu, \alpha )$ is continuously differentiable.
Since
\[
\dfrac{\partial}{\partial\lambda}\left(
\dfrac{b(c\lambda +\mu)}{\alpha b\lambda +1}\right)
=\dfrac{b(c-\mu\alpha b)}{(\alpha b\lambda +1)^{2}}
\begin{cases}
\ge 0\quad&\mbox{if}\ \mu\le c/(\alpha b),\\
<0\quad&\mbox{if}\ \mu>c/(\alpha b).
\end{cases}
\]
Then for any fixed $\alpha >0$ and $\mu>c/(\alpha b)$,
\[
[0,\infty)\ni\lambda\mapsto K(\lambda, \mu, \alpha )
=\lambda-\sigma_{1}\left(\dfrac{b(c\lambda+\mu )}{\alpha b\lambda+1}
\1_{\Omega_1}\right)
\]
is monotone increasing and satisfies
\[
K(0, \mu, \alpha )=-\sigma_{1}(b\mu\1_{\Omega_1})<0
\quad
\mbox{and}\quad
\lim_{\lambda\to\infty }K(\lambda, \mu, \alpha )=\infty.
\]
Then, the intermediate value theorem gives $\ell (\mu, \alpha)>0$ such that
$K(\lambda,\mu,\alpha)<0$
for $0<\lambda<\ell (\mu, \alpha )$;
\[
K(\ell(\mu, \alpha ), \mu, \alpha )=0;
\]
and
$K(\lambda,\mu,\alpha)>0$
for $\lambda>\ell (\mu, \alpha )$.
By virtue of the implicit function theorem,
one can verify that
$\ell (\mu, \alpha )$ is continuously differentiable
with respect to $(\mu, \alpha)\in (c/(\alpha b), \infty )\times (0,\infty)$.
It is noted that $K(\lambda, \mu, \alpha )$ is monotone decreasing
with respect to $\mu>c/(\alpha b)$.
By the implicit differentiation formula, we see 
\begin{equation}\label{muinc}
\frac{\partial \ell}{\partial \mu}(\mu,\alpha)
= -\,\frac{K_{\mu}\bigl(\ell(\mu,\alpha),\mu,\alpha\bigr)}
         {K_{\lambda}\bigl(\ell(\mu,\alpha),\mu,\alpha\bigr)}
>0
\quad\mbox{for any}\ (\mu,\alpha)\in
\left(\dfrac{c}{\alpha b},\infty\right)\times(0,\infty).
\end{equation}
By \eqref{sigmalim}, we note that
$
K(\lambda, \mu, \alpha )\searrow
\lambda-\sigma_{1}^{D}(\Omega_0)$
as
$\mu\nearrow\infty$.
From this fact, it is possible to verify that
$
\ell (\mu, \alpha)\nearrow \sigma_{1}^{D}(\Omega_{0})$
as
$\mu\nearrow\infty$.
Since
\[
K\left(\lambda, \dfrac{c}{\alpha b}, \alpha\right)
=
\lambda-\sigma_{1}\left(\dfrac{c}{\alpha}\1_{\Omega_1}\right),
\]
then we know that
$\ell (c/(\alpha b), \alpha )=\sigma_{1}(c\1_{\Omega_1}/\alpha )$.
Recalling that $\sigma_{1}(\xi\1_{\Omega_{1}})$ is monotone increasing 
with respect to $\xi$, it follows from the chain rule that 
$K(\lambda,\mu,\alpha)$ is monotone increasing for $\alpha >0$. Hence, by implicit 
differentiation one obtains
\[
\frac{\partial \ell}{\partial \alpha}(\mu,\alpha)
= -\,\frac{K_{\alpha}\bigl(\ell(\mu,\alpha),\mu,\alpha\bigr)}
         {K_{\lambda}\bigl(\ell(\mu,\alpha),\mu,\alpha\bigr)}<0 
\quad
\mbox{for any}\ (\mu, \alpha)\in
\left(\dfrac{c}{\alpha b},\infty\right)\times(0,\infty).
\]
Finally we show the uniform convergence of $\ell(\,\cdot\,,\alpha )$
to zero
on any compact set of $(0,\infty)$ as $\alpha\to\infty$.
From \eqref{muinc} it follows that $\ell(\mu,\alpha)$ is monotone increasing 
with respect to $\mu$. Hence, for any fixed $M>0$, it suffices to show that 
$\ell(M,\alpha)\to 0$ as $\alpha\to\infty$.
To prove by contradiction, assume that 
\[
\ell_{\infty}:=\limsup_{\alpha\to\infty}\ell(M,\alpha)>0.
\]
Then there exists a sequence $\{\alpha_{j}\}$ with $\alpha_{j}\to\infty$ 
as $j\to\infty$ such that $\ell(M,\alpha_{j})\to\ell_{\infty}$.
From
\[
\ell(M,\alpha_{j})
=\sigma_{1}\!\left(
\frac{b\,(c\,\ell(M,\alpha_{j})+M)}{\alpha_{j}b\,\ell(M,\alpha_{j})+1}\1_{\Omega_1}
\right),
\]
letting $j\to\infty$ on both sides yields
\[
\lim_{j\to\infty}\ell(M,\alpha_{j})=\sigma_{1}(0)=0,
\]
which contradicts the assumption.
The proof of Lemma \ref{implicitlem} is complete.
\end{proof}

We define a function $\widetilde{\ell}(\mu,\alpha)$ by connecting the function 
$\ell(\mu,\alpha)$ introduced in Lemma~\ref{implicitlem} with 
$\sigma_{1}(b\mu\1_{\Omega_1})$ as follows:
\[
\widetilde{\ell}(\mu,\alpha)=
\begin{cases}
\sigma_{1}(b\mu\1_{\Omega_{1}}) & \mbox{for}\ 0<\mu\le c/(\alpha b),\\[6pt]
\ell(\mu,\alpha) & \mbox{for}\ \mu>c/(\alpha b).
\end{cases}
\]
\begin{figure}
  \centering
  \includegraphics[width=0.5\textwidth]{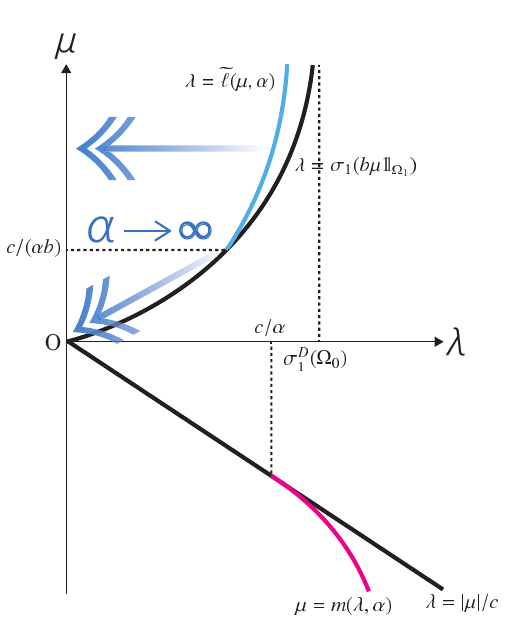}
  \caption{
  Graphs of $\lambda=\widetilde{\ell} (\mu, \alpha)$ and $\mu=m(\lambda, \alpha)$ in the $\lambda\mu$ plane in 
  Propositions \ref{nonexprop1} and \ref{nonexprop2}.}
  \label{fig5}
\end{figure}
In this case, by Lemmas~\ref{du-shi} and \ref{implicitlem}, it follows 
that for any fixed $\alpha>0$, the function $\widetilde{\ell}(\mu,\alpha)$ 
is monotone increasing and continuous with respect to $\mu>0$ 
(in particular, it is continuous also at $\mu=c/(\alpha b)$), and satisfies 
the following properties:
\begin{itemize}
  \item $\widetilde{\ell}(0,\alpha)=0$, and 
  $\widetilde{\ell}(\mu,\alpha)\nearrow\sigma_{1}^{D}(\Omega_{0})$ 
  as $\mu\nearrow\infty$;
  \item $\widetilde{\ell}(\,\cdot\,,\alpha)\searrow 0$ uniformly on any 
compact subset of $(0,\infty)$ as $\alpha\nearrow\infty$.
\end{itemize}
The following proposition provides a necessary condition for the existence 
of positive solutions of \eqref{sp} in the range $\mu>0$, that is, 
a sufficient condition for the nonexistence of positive solutions.

\begin{prop}\label{nonexprop1}
If $\alpha >0$ and $0<\lambda\le\widetilde{\ell}(\mu, \alpha)$, 
then \eqref{sp2}, equivalently \eqref{sp}, admits no positive solution.
\end{prop}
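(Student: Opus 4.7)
The plan is to argue by contradiction. Assume a positive solution $(u,v)$ of \eqref{sp2} exists with $0<\lambda\le\widetilde{\ell}(\mu,\alpha)$; the goal is to produce a pointwise lower bound for $v$ on $\overline{\Omega}_1$ from the second equation and, via the eigenvalue characterization of the first equation combined with the strict monotonicity property~(iii) of $\sigma_1$, to deduce $\lambda>\widetilde{\ell}(\mu,\alpha)$.

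First, since $u>0$ solves $-\Delta u+(u+b\1_{\Omega_1}v)u=\lambda u$ in $\Omega$ under Neumann conditions, the standard eigenvalue characterization gives $\lambda=\sigma_1(u+b\1_{\Omega_1}v)$. I would then pick a minimum point $x_1\in\overline{\Omega}_1$ of $v$. Treating the second equation of \eqref{sp2} as a linear elliptic equation $-\Delta v+V(x)v=0$ with bounded coefficient $V$ (bounded by Lemma~\ref{ape}), Hopf's boundary point lemma together with $\partial_n v=0$ on $\partial\Omega_1$ forces $x_1$ to lie in the interior of $\Omega_1$ (the alternative that $v$ is constant is easily excluded from the equations). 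At such an interior minimum, $\Delta v(x_1)\ge 0$, and the second equation of \eqref{sp2} yields
\[
\alpha u(x_1)\bigl(\lambda-u(x_1)-bv(x_1)\bigr)+\mu+cu(x_1)-v(x_1)\le 0.
\]
Solving for $v(x_1)$ and using $0<u(x_1)\le\lambda$ from Lemma~\ref{ape} produces
\[
v(x_1)\ge g(u(x_1)),\qquad g(\xi):=\dfrac{\alpha\xi(\lambda-\xi)+c\xi+\mu}{1+\alpha b\xi}.
\]
A direct computation shows that the numerator of $g'(\xi)$ is the concave quadratic $-\alpha^2 b\xi^2-2\alpha\xi+(\alpha\lambda+c-\alpha b\mu)$ with vertex at $\xi=-1/(\alpha b)<0$, from which $\min_{[0,\lambda]}g=\min(g(0),g(\lambda))$. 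Since $g(0)-g(\lambda)=\lambda(\alpha b\mu-c)/(\alpha b\lambda+1)$, this minimum equals $g(0)=\mu$ exactly when $\mu\le c/(\alpha b)$ and equals $g(\lambda)=(c\lambda+\mu)/(\alpha b\lambda+1)$ when $\mu>c/(\alpha b)$.

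The conclusion then splits according to the definition of $\widetilde{\ell}$. If $0<\mu\le c/(\alpha b)$, then $\widetilde{\ell}(\mu,\alpha)=\sigma_1(b\mu\1_{\Omega_1})$ and the bound reads $v\ge\mu$ on $\overline{\Omega}_1$; consequently $u+b\1_{\Omega_1}v\ge b\mu\1_{\Omega_1}$ pointwise on $\Omega$, strictly so on $\Omega_0$ where $u>0$. Property~(iii) of $\sigma_1$ then yields
\[
\lambda=\sigma_1(u+b\1_{\Omega_1}v)>\sigma_1(b\mu\1_{\Omega_1})=\widetilde{\ell}(\mu,\alpha),
\]
contradicting the hypothesis. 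If instead $\mu>c/(\alpha b)$, then $\widetilde{\ell}(\mu,\alpha)=\ell(\mu,\alpha)$ and $v\ge(c\lambda+\mu)/(\alpha b\lambda+1)$ on $\overline{\Omega}_1$; the same eigenvalue argument yields
\[
\lambda>\sigma_1\!\left(\dfrac{b(c\lambda+\mu)}{\alpha b\lambda+1}\1_{\Omega_1}\right),
\]
that is, $K(\lambda,\mu,\alpha)>0$. Since Lemma~\ref{implicitlem} shows $K(\cdot,\mu,\alpha)$ is strictly increasing in $\lambda$ with $K(\ell(\mu,\alpha),\mu,\alpha)=0$, this forces $\lambda>\ell(\mu,\alpha)=\widetilde{\ell}(\mu,\alpha)$, again contradicting the hypothesis.

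The main point I would need to justify carefully is that the minimum of $v$ is attained in the interior of $\Omega_1$ rather than on the two-component boundary $\partial\Omega_1=\partial\Omega\cup\partial\Omega_0$; here the regularity $v\in W^{2,p}(\Omega_1)\hookrightarrow C^1(\overline{\Omega}_1)$ from \eqref{w2pn} together with the classical Hopf lemma for the linear equation satisfied by $v$ (with bounded coefficient) suffices. A secondary but delicate step is the precise case analysis of the rational function $g$, which must be carried out with enough care that its minimum on $[0,\lambda]$ matches exactly the potential appearing in $K(\lambda,\mu,\alpha)$.
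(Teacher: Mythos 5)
Your argument has the same skeleton as the paper's proof: evaluate the second equation of \eqref{sp2} at a minimum point $x_1\in\overline{\Omega}_1$ of $v$, obtain the pointwise lower bound $v(x_1)\ge r(u(x_1))$ with the same rational function $r=g$, show $\min_{[0,\lambda]}r=\min\{r(0),r(\lambda)\}$ (your derivative computation is correct and merely more explicit than the paper's ``easily checked''), split the cases at $\mu=c/(\alpha b)$ exactly as in the definition of $\widetilde{\ell}$, and close with $\lambda=\sigma_1(u+b\1_{\Omega_1}v)$ and strict monotonicity of $\sigma_1$. That part is fine and matches the paper.

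The one genuine issue is exactly the step you flag as delicate: you invoke Hopf's boundary point lemma to force the minimizer of $v$ into the interior. As stated, this does not work. For $\Delta v + V(x)v=0$ with a bounded but sign-indefinite $V$ and a strictly positive solution $v$, the classical Hopf lemma at a \emph{minimum} (e.g.\ Gilbarg--Trudinger, Lemma~3.4) requires the zeroth-order coefficient $c$ to be $\le 0$ together with the extremal value being nonpositive, or $c\equiv 0$; here $v(x_1)>0$ and $V$ changes sign, and the usual $V^{\pm}$-splitting trick does not remove the obstruction (it leads to a circular argument). The paper instead invokes the maximum principle of Lou and Ni \cite[Proposition~2.1]{LN2}, which yields $\Delta v(x_1)\ge 0$ directly at the minimizer, whether $x_1$ is interior or on $\partial\Omega_1$, by a second-order Taylor argument at the boundary that uses the Neumann condition $\partial_n v=0$ together with the $C^2$-regularity of $v$ up to $\partial\Omega_1$. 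That tool is insensitive to the sign of $V$ and makes the interior-versus-boundary dichotomy unnecessary; replacing your Hopf step with this reference (or with the elementary boundary Taylor expansion it encapsulates) closes the gap, after which the rest of your proof stands as written.
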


\begin{proof}
For any positive solution $(u,v)$ to \eqref{sp2},
let $x_{1}\in\overline{\Omega}_{1}$ be the minimum point of
$v$, that is,
$v(x_{1})=\min_{\overline{\Omega}_1}v$.
Then the usual application of the maximum principle
as in the proof of Lemma \ref{ape}, we obtain
\begin{equation}\label{minv}
\min_{\overline{\Omega}_{1}}v=
v(x_1)\ge
\dfrac{\alpha u(x_1)(\lambda -u(x_1))+\mu+cu(x_1)}
{\alpha b u(x_1)+1}=:r(u(x_1)),
\end{equation}
where 
\[
r(s):=\dfrac{\alpha s(\lambda -s)+\mu+cs}{\alpha bs +1}.\]
We recall $0<u(x_1)\le\lambda $ by \eqref{pfape1}.
It is easily checked that
\[
\min_{0\le s\le\lambda}r(s)=\min\{
r(0), r(\lambda )\}=
\min\left\{
\mu, \dfrac{\mu+c\lambda}{\alpha b\lambda +1}\right\}.\]

First, consider the case $0\le\mu\le c/(\alpha b)$. 
Hence in this case,
\[
\min_{0\le s\le\lambda} r(s)=r(0)=\mu.
\]
Then \eqref{minv} implies that 
\[
\min_{\overline{\Omega}_{1}} v=v(x_1) \ge \mu.
\]
Next, consider the first equation of \eqref{sp2}:
\begin{equation}\label{firstpote}
\begin{cases}
-\Delta u+(u+b\1_{\Omega_1}v)u=\lambda u,\quad u>0
& \mbox{in } \Omega,\\
\partial_{n}u=0
& \mbox{on } \partial\Omega.
\end{cases}
\end{equation}
Then it follows that $\lambda=\sigma_{1}(u+b\1_{\Omega_1}v)$.
By the monotone increasing property of $\sigma_{1}(\,\cdot\,)$,
we see that
\[
\lambda=\sigma_{1}(u+b\1_{\Omega_1}v)>
\sigma_{1}\bigl(bv(x_1)\1_{\Omega_1}\bigr)\ge
\sigma_{1}(b\mu\1_{\Omega_1}).
\]
Therefore, if $0<\mu\le c/(\alpha b)$ and 
$\lambda\le\sigma_{1}(b\mu\1_{\Omega_1})$, 
then \eqref{sp2}, and hence also \eqref{sp}, 
admits no positive solution.

Next, consider the case $\mu>c/(\alpha b)$. 
In this case, 
\[
\min_{0\le s\le\lambda} r(s)
=r(\lambda)
=\dfrac{\mu+c\lambda}{\alpha b\lambda+1}.
\]
It follows from \eqref{minv} that
\[
\min_{\overline{\Omega}_{1}} v =v(x_1)\;\ge\; 
\dfrac{\mu+c\lambda}{\alpha b\lambda+1}.
\]
Therefore, by \eqref{firstpote} we obtain
\[
\lambda=\sigma_{1}(u+b\1_{\Omega_1}v)>
\sigma_{1}\!\left(
\dfrac{b(\mu+c\lambda )}{\alpha b\lambda+1}\1_{\Omega_1}
\right).
\]
Consequently, if $\mu>c/(\alpha b)$ and 
$\lambda\le \ell(\mu,\alpha)$, then \eqref{sp2}, and hence also 
\eqref{sp}, admits no positive solution.
We complete the proof of Proposition \ref{nonexprop1}.
\end{proof}
We remark that in the case of $\alpha =0$, \eqref{sp2} has no positive solution if $\lambda \le \sigma_{1}(b\mu\1_{\Omega_{1}})$ (see \cite[Theorem~2.1]{DS} and \cite[Lemmas~2.1 and 3.3]{Oeda2011}).

In the case $\mu<0$, we investigate a sufficient region where \eqref{sp} 
admits no positive solution. 
To this end, we introduce the following negative function:
\[
m(\lambda,\alpha)=
\begin{cases}
-c\lambda & \text{for } 0<\lambda\le\dfrac{c}{\alpha},\\[6pt]
-\dfrac{\alpha}{4}\left(\lambda+\dfrac{c}{\alpha}\right)^{2} 
& \text{for } \lambda>\dfrac{c}{\alpha}.
\end{cases}
\]
For any fixed $\alpha>0$, 
$m(\lambda,\alpha)$ 
is monotone decreasing with respect to $\lambda>0$. 
In particular, at the junction point $\lambda=c/\alpha$ 
where the straight line and the parabola meet, 
$m$ is continuously differentiable. 
Furthermore,
\[
m(\,\cdot\,,\alpha)\searrow -\infty 
\quad \text{as } \alpha\nearrow\infty 
\ \text{uniformly on any compact subset of } (0,\infty).
\]

\begin{prop}\label{nonexprop2}
If $\mu\le m(\lambda, \alpha )$, then
\eqref{sp2}, equivalently \eqref{sp}, admits no
positive solution.
\end{prop}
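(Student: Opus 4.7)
The strategy is to argue by contradiction: assume a positive solution $(u,v)$ of \eqref{sp2} exists and derive from the second equation, via the maximum principle applied at a maximum point of $v$, a pointwise constraint relating $\lambda,\mu,\alpha$ and $u(x_0)$. This constraint will take the form of a strict lower bound on $\mu$ by a quadratic in $s=u(x_0)\in(0,\lambda]$. Minimizing that quadratic over the admissible interval $[0,\lambda]$ and separating the cases $\lambda\le c/\alpha$ and $\lambda>c/\alpha$ will produce exactly the two branches defining $m(\lambda,\alpha)$.

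First, I would pick a point $x_0\in\overline{\Omega}_1$ where $v$ attains its maximum and apply the maximum principle to the second equation of \eqref{sp2}, exactly as in the proof of Lemma~\ref{ape} (using \cite[Proposition~2.1]{LN1} to handle the case $x_0\in\partial\Omega_1$ through the Neumann condition). This yields
\[
\alpha u(x_0)\bigl(\lambda-u(x_0)-b v(x_0)\bigr)+\mu+c u(x_0)-v(x_0)\ge 0,
\]
which, rearranged, reads
\[
\alpha u(x_0)(\lambda-u(x_0))+\mu+c u(x_0)\ge v(x_0)\bigl(1+\alpha b u(x_0)\bigr)>0,
\]
since $v(x_0)>0$ and $u(x_0)\ge 0$. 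Writing $s:=u(x_0)$ and recalling from \eqref{pfape1} that $s\in(0,\lambda]$, this is equivalent to $\mu>f(s)$ with
\[
f(s):=\alpha s^{2}-(\alpha\lambda+c)\,s.
\]

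Next I would minimize $f$ over $[0,\lambda]$. Its critical point is $s^{\ast}=\lambda/2+c/(2\alpha)$, and $s^{\ast}\le\lambda$ exactly when $\lambda\ge c/\alpha$. In that regime the minimum is attained at $s^{\ast}$ and equals $-(\alpha\lambda+c)^{2}/(4\alpha)=-(\alpha/4)(\lambda+c/\alpha)^{2}$. When instead $\lambda<c/\alpha$, the vertex lies to the right of $\lambda$, so $f$ is decreasing on $[0,\lambda]$ and the minimum is attained at $s=\lambda$, giving $f(\lambda)=-c\lambda$. A direct check shows that both expressions agree at the junction $\lambda=c/\alpha$ (both equal $-c^{2}/\alpha$), so
\[
\min_{s\in[0,\lambda]}f(s)=m(\lambda,\alpha).
\]

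Since we have shown that every positive solution satisfies $\mu>f(u(x_0))\ge m(\lambda,\alpha)$, the contrapositive yields the desired conclusion: if $\mu\le m(\lambda,\alpha)$, then \eqref{sp2}, and hence \eqref{sp}, admits no positive solution. The only delicate step is the application of the maximum principle at a boundary maximum point of $v$ on $\partial\Omega_1$, but this is handled by the standard Neumann-type maximum principle already invoked in the proof of Lemma~\ref{ape}; the remainder is a routine optimization of the quadratic $f$.
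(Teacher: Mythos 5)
Your proof is correct, but it takes a genuinely different route from the paper's. The paper integrates the second equation of \eqref{sp2} over $\Omega_1$ to obtain the global identity $\int_{\Omega_1}\frac{v}{1+\alpha u}\,h(u,v)\,dx=0$ with $h(u,v)=\alpha u(\lambda-u-bv)+\mu+cu-v$, then uses the a priori box $0\le u\le\lambda$, $0\le v\le\max\{\lambda/b,\mu+c\lambda\}$ from Lemma~\ref{ape} to show that $\mu\le m(\lambda,\alpha)$ forces $h(u(x),v(x))<h(u(x),0)\le 0$ pointwise, contradicting the zero-average identity. You instead evaluate at a maximum of $v$ and invoke the pointwise maximum-principle inequality already isolated as \eqref{pfape2} in the proof of Lemma~\ref{ape}; rearranging gives $\mu>\alpha s^2-(\alpha\lambda+c)s=:f(s)$ at $s=u(x_0)\in(0,\lambda]$, and minimizing $f$ over $[0,\lambda]$ reproduces exactly the two branches of $m(\lambda,\alpha)$ (with agreement $-c^2/\alpha$ at the junction $\lambda=c/\alpha$). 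The two arguments are dual: the paper's $\max_{\mathcal{R}}h=\mu-m(\lambda,\alpha)$ is precisely your $\mu-\min_{[0,\lambda]}f$. One small advantage of your version is that it bypasses the upper bound on $v$ entirely (only $v(x_0)>0$ and the bound $u\le\lambda$ from \eqref{pfape1} are needed), and it more directly recycles \eqref{pfape2}; the paper's integral route has the advantage of not requiring any discussion of where the maximum is attained or of the Neumann boundary version of the maximum principle, since it works with the averaged identity.
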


\begin{proof}
Let $(u(x), v(x))$ be any positive solution of \eqref{sp2}.
Integrating the second equation of \eqref{sp2} over $\Omega_1$, we have
\begin{equation}\label{int3}
\int_{\Omega_1}\frac{v(x)}{1+\alpha u(x)}\,
h(u(x),v(x))\,
dx=0,
\end{equation}
where
\[
h(u,v)
:=\alpha u(\lambda -u-bv)+\mu +cu-v.
\] 
By virtue of \eqref{pfape1} and
\eqref{pfape3}, we investigate
the maximum of $h(u,v)$
on the rectangle
\[\mathcal{R}:=\{\,(u,v)\,:\,
0\le u\le \lambda,\quad
0\le v\le\max\{\lambda/b, \mu+c\lambda\}\,\}.\]
Since $h(u,v)$ is monotone decreasing with respect to
$v\in [0, \max\{\lambda/b, \mu+c\lambda\}]$
for any fixed $u\in [0,\lambda ]$,
then
\[
\max_{\mathcal{R}}h(u,v)=
\max_{0\le u\le\lambda}h(u,0)=\max_{0\le u\le\lambda}
\{\,\alpha u(\lambda -u)+\mu+cu\,\}.\]
Therefore, it is easy to verify
\[
\max_{\mathcal{R}}h(u,v)=\begin{cases}
h(\lambda, 0)=\mu+c\lambda\quad&\mbox{if } 0<\lambda\le\dfrac{c}{\alpha},\\
h\left(\dfrac{\lambda}{2}+\dfrac{c}{2\alpha},0\right)=
\mu+\dfrac{\alpha}{4}\left(\lambda+\dfrac{c}{\alpha}\right)^{2}
\quad&\mbox{if } \lambda>\dfrac{c}{\alpha}.
\end{cases}
\]
Suppose that $\mu\le m(\lambda, \alpha )$.
Then it follows that
\[
h(u(x),v(x))< h(u(x),0)\le 
\max_{\mathcal{R}} h(u,v)\le 0\qquad \text{for all }x\in\Omega_1.
\]
This inequality contradicts \eqref{int3}.
Therefore, there is no positive solution for \eqref{sp2} when $\mu\le m(\lambda,\alpha)$.
The proof of
Proposition~\ref{nonexprop2} is complete.
\end{proof}
\subsection{Bifurcation structure of positive stationary solutions}
In the following lemmas, we take $\lambda$ as a bifurcation parameter. 
We will apply the local bifurcation theorem of Crandall and Rabinowitz 
\cite[Theorem 1.7]{CR1} 
to \eqref{sp2} in order to obtain a branch of positive solutions 
which bifurcates from $\varGamma_u$ or $\varGamma_v$. 
Here $\varGamma_u$ and $\varGamma_v$ are sets of semitrivial solutions 
defined by \eqref{gammauv}. Moreover, 
for $p>N$, $X$ is the Sobolev space defined by \eqref{w2pn} and
$$
Y:=L^p (\Omega )\times L^p (\Omega_1 ).
$$
As a beginning of the bifurcation analysis for \eqref{sp2},
we investigate the local bifurcation from $\varGamma_v$ for any fixed $\mu >0$. 
\begin{lem}\label{lb1}
Assume that $\mu > 0$. Positive solutions of \eqref{sp2} bifurcate from $\varGamma_v$ 
if and only if $\lambda =\sigma_1 (b\mu\1_{\Omega_1} )$
in the sense that
all positive solutions of \eqref{sp2} 
near $(\sigma_1 (b\mu\1_{\Omega_1} ),0,\mu )\in \mathbb{R}\times X$ 
can be expressed as
$$
\varGamma_{1, \delta} =\left\{ (\lambda ,u,v)=
\left( \lambda (s),s(\phi^* +\widetilde{u}(s) ),
\mu +s(\psi^* +\widetilde{v}(s))\right) :s\in (0,\delta )\right\}
$$
for some $\delta >0$. Here $(\lambda (s),\widetilde{u}(s), \widetilde{v}(s))$ are smooth functions 
with respect to $s$ 
and satisfy $(\lambda (0),\widetilde{u}(0), \widetilde{v}(0))=(\sigma_1 (b\mu\1_{\Omega_1} ),0,0)$,
$\int_{\Omega}\widetilde{u}(s)\phi^* dx=0$ for any $s\in (0,\delta)$, and
\begin{equation}\label{direction}
\lambda^{\prime}(0)
=\int_{\Omega}(\phi^* +b\1_{\Omega_1} \psi^* )\left( \phi^* \right)^2 dx.
\end{equation}
Moreover, there exists a positive constant $\alpha^{*}$ such that
\begin{equation}\label{direction2}
\lambda'(0)
    \begin{cases}
        >0\quad&\mbox{for}\quad 0<\alpha<\alpha^{*},\\
=0\quad&\mbox{for}\quad\alpha=\alpha^{*},\\
<0\quad&\mbox{for}\quad\alpha>\alpha^{*}.
    \end{cases}
\end{equation}
\end{lem}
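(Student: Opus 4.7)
The plan is to realize \eqref{sp2} as the zero set of a smooth map $F:\mathbb{R}\times X\to Y$ and to apply the local bifurcation theorem of Crandall--Rabinowitz~\cite{CR1} along the semitrivial branch $\varGamma_v$. Writing $F=(F_1,F_2)$ for the two components of the stationary system, I first compute the Fr\'echet derivative $F_{(u,v)}(\lambda,0,\mu)$: the linearization of $F_1$ yields the Neumann eigenvalue problem $-\Delta\phi+b\mu\1_{\Omega_1}\phi=\lambda\phi$, so this component is non-injective precisely when $\lambda=\sigma_1(b\mu\1_{\Omega_1})$, with kernel spanned by $\phi^*$. Once $\phi=\phi^*$ is fixed, the linearization of $F_2$ reduces, after a direct expansion of $v\,h(u,v)/(1+\alpha u)$ using the key cancellation $h(0,\mu)=0$, to the inhomogeneous Neumann equation $(-\Delta+\mu)\psi=\mu\bigl[\alpha\{\sigma_1(b\mu\1_{\Omega_1})-b\mu\}+c\bigr]\phi^*$ on $\Omega_1$. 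Since $-\Delta+\mu$ is invertible under Neumann boundary conditions for $\mu>0$, this furnishes exactly $\psi^*$ as defined in \eqref{psistar1}, so $\ker F_{(u,v)}=\mathrm{span}\{(\phi^*,\psi^*)\}$ is one-dimensional and the range has codimension one.

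For the transversality condition I compute $F_{\lambda,(u,v)}(\sigma_1(b\mu\1_{\Omega_1}),0,\mu)[\phi^*,\psi^*]$, whose first component equals $\phi^*$. Since the Fredholm range condition for the first component is $\int_\Omega f\phi^*\,dx=0$, and $\int_\Omega(\phi^*)^2\,dx=1\neq 0$, transversality holds. Crandall--Rabinowitz then produces the smooth curve $\varGamma_{1,\delta}$ described in the statement, with the normalization $\int_\Omega\widetilde{u}(s)\phi^*\,dx=0$ arising naturally from the choice of complement; positivity of $(u,v)$ for small $s>0$ is immediate from $\phi^*>0$ and $\mu>0$. The discontinuity of $b\1_{\Omega_1}$ across $\partial\Omega_0$ does not obstruct this framework, since the coefficient is bounded and measurable while the Fredholm and maximum-principle tools invoked (including the Bony strong maximum principle already used in Section~3) are robust in this regularity class.

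To derive \eqref{direction} I would substitute the parametrization into $F_1=0$, divide by $s$, differentiate once with respect to $s$ at $s=0$, and pair the resulting identity with $\phi^*$ in $L^2(\Omega)$. The eigenvalue identity for $\phi^*$ together with self-adjointness of $-\Delta$ under the Neumann boundary condition causes the contribution of $\widetilde{u}'(0)$ to vanish, and the residual terms rearrange directly into \eqref{direction}. Inserting \eqref{psistar1} into \eqref{direction} then expresses $\lambda'(0)$ as
\[
\lambda'(0)=\int_\Omega(\phi^*)^3\,dx
+b\mu\bigl[\alpha\{\sigma_1(b\mu\1_{\Omega_1})-b\mu\}+c\bigr]
\int_{\Omega_1}(\phi^*)^2\,(-\Delta+\mu)^{-1}_{\Omega_1}\phi^*\,dx.
\]
Lemma~\ref{du-shi} gives $\sigma_1(b\mu\1_{\Omega_1})-b\mu<0$, and the maximum principle gives $(-\Delta+\mu)^{-1}_{\Omega_1}\phi^*>0$, so the map $\alpha\mapsto\lambda'(0)$ is strictly decreasing and affine in $\alpha$, positive at $\alpha=0$ and tending to $-\infty$ as $\alpha\to\infty$. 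A unique zero $\alpha^*>0$ therefore exists, yielding \eqref{direction2}.

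The step I expect to require the most care is the Fr\'echet derivative of $F_2$: the quotient form $v\,h(u,v)/(1+\alpha u)$ would generate several cross-terms if expanded blindly, and it is the cancellation $h(0,\mu)=0$ that collapses them and makes the clean formula for $\psi^*$ possible. A secondary subtlety is verifying that, despite the jump of $b\1_{\Omega_1}$ at $\partial\Omega_0$, the linearized operator remains Fredholm of index zero as a map $X\to Y$; this reduces to the standard $W^{2,p}$-theory with bounded measurable coefficients together with the simplicity of $\sigma_1(b\mu\1_{\Omega_1})$.
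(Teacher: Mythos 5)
Your proposal is correct and follows essentially the same route as the paper: the same map $F$, the same Crandall--Rabinowitz framework, the same kernel $\mathrm{span}\{(\phi^*,\psi^*)\}$, and the same transversality check via the first component of $F_{\lambda(u,v)}$. The only cosmetic difference is in extracting $\lambda'(0)$: you substitute the parametrization into $F_1=0$, divide by $s$, differentiate, and pair with $\phi^*$, whereas the paper invokes the abstract bifurcation-direction formula from Kielh\"ofer and computes the Hessian $F_{(u,v)(u,v)}[\phi^*,\psi^*]^2$; both reduce to the same identity because the dual functional $\ell_1$ only sees the first component. Your explicit rewriting of $\lambda'(0)$ as an affine, strictly decreasing function of $\alpha$ (using the positivity of $(-\Delta+\mu)^{-1}_{\Omega_1}\phi^*$ and the sign $\sigma_1(b\mu\1_{\Omega_1})-b\mu<0$) matches the paper's monotonicity argument for $\psi^*(\alpha)$, and both yield the unique threshold $\alpha^*$.
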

\begin{proof}
The nonlinear map 
$
F:\mathbb{R}\times X \to Y
$
associated with \eqref{sp2} is defined by
\[
F(\lambda,u,v)=
\begin{bmatrix}
\Delta u + u(\lambda - u - b\,\1_{\Omega_1} v) \\[6pt]
\Delta v + \dfrac{v}{1+\alpha u}\bigl\{ \alpha u(\lambda - u - b v) + \mu + c u - v \bigr\}
\end{bmatrix}.
\]
Hence $F(\lambda ,u,v)=(0,0)$ if and only if $(u,v)$ is a solution of \eqref{sp2}. 
In particular, $F(\lambda ,0,\mu )=(0,0)$ for any $\lambda$. 
Elementary calculations show that 
the Fr\'{e}chet derivative of $F$ at $(u,v)=(0,\mu )$ is
\begin{equation}\label{fuv}
F_{(u,v)}(\lambda ,0,\mu )\left[
\begin{array}{l}
\phi \\
\psi
\end{array}
\right]
=\left[
\begin{array}{l}
\Delta \phi +(\lambda -b\mu\1_{\Omega_1} )\phi \\
\Delta \psi -\mu \psi +\mu \{ \alpha (\lambda -b\mu )+c\} \phi
\end{array}
\right].
\end{equation}
By the Krein-Rutman theorem, $F_{(u,v)}(\lambda ,0,\mu )[\phi ,\psi ]=0$ 
has a solution with $\phi >0$ if and only if $\lambda =\sigma_1 (b\mu\1_{\Omega_1} )$. 
Hence $\lambda =\sigma_1 (b\mu\1_{\Omega_1} )$ is the only possible bifurcation point where 
positive solutions of \eqref{sp2} bifurcate from $\varGamma_v$. 
From \eqref{phistar1}, \eqref{psistar1} and \eqref{fuv}, the kernel of $F_{(u,v)}(\sigma_1 (b\mu\1_{\Omega_1} ),0,\mu )$ 
is given by 
$$
\mbox{Ker}\, F_{(u,v)}(\sigma_1 (b\mu\1_{\Omega_1} ),0,\mu )=\mbox{Span}\{ (\phi^* ,\psi^* )\}
$$ 
and thus $\mbox{dim}\, \mbox{Ker}\, F_{(u,v)}(\sigma_1 (b\mu\1_{\Omega_1} ),0,\mu )=1$. 
Moreover, due to the Fredholm alternative theorem, 
the range of $F_{(u,v)}(\sigma_1 (b\mu\1_{\Omega_1} ),0,\mu )$ is given by 
\begin{equation}\label{range1}
\mbox{Ran}\, F_{(u,v)}(\sigma_1 (b\mu\1_{\Omega_1} ),0,\mu )=
\left\{ (\phi ,\psi )\in Y:\int_{\Omega}\phi \phi^* dx=0\right\}
\end{equation}
and hence $\mbox{codim}\, \mbox{Ran}\, F_{(u,v)}(\sigma_1 (b\mu\1_{\Omega_1} ),0,\mu )=1$. 
Furthermore, \eqref{range1} yields
\begin{equation}\label{Fluv}
    F_{\lambda (u,v)}(\sigma_1 (b\mu\1_{\Omega_1} ),0,\mu )\left[
\begin{array}{c}
\phi^* \\
\psi^*
\end{array}
\right]
=\left[
\begin{array}{c}
\phi^* \\
\alpha \mu \phi^*
\end{array}
\right]
\not\in \mbox{Ran}\, F_{(u,v)}(\sigma_1 (b\mu\1_{\Omega_1} ),0,\mu )
\end{equation}
because of $\phi^* >0$. 
Therefore, we can apply the local bifurcation theorem \cite[Theorem 1.7]{CR1} to $F$ 
at $(\sigma_1 (b\mu\1_{\Omega_1} ),0,\mu )$. 

In order to show \eqref{direction} and \eqref{direction2}, we recall the following direction formula at the bifurcation point (see e.g., \cite[(I.6.3)]{K}):
$$
\lambda^{\prime}(0)=
-\frac{\langle F_{(u,v)(u,v)}(\sigma_1 (b\mu\1_{\Omega_1} ),0,\mu )[\phi^* ,\psi^* ]^2 ,\ell_1 \rangle}
{2\langle F_{\lambda (u,v)}(\sigma_1 (b\mu\1_{\Omega_1} ),0,\mu )[\phi^* ,\psi^* ],\ell_1 \rangle},
$$
where $\langle\,\cdot\,,\,\cdot\,\rangle$ denotes the duality between $Y$ and $Y^{*}$.
The functional
$\ell_1\in Y^{*}$ is defined by
$\langle [\phi, \psi], \ell_{1}\rangle=\int_{\Omega}\phi\phi^{*}dx$.
Then for $f(u,v)=u(\lambda -u-b\1_{\Omega_{1}}v)$, it follows that
\begin{equation}
\begin{split}
&\langle F_{(u,v)(u,v)}(\sigma_1 (b\mu\1_{\Omega_1} ),0,\mu )[\phi^* ,\psi^* ]^2 ,\ell_1 \rangle \\
=&
\int_{\Omega}[\phi^{*}, \psi^{*}]
\left[
\begin{array} {cc}
f_{uu} & f_{uv} \\
f_{vu} & f_{vv}
\end{array}
\right]\bigg|_{(\lambda, u, v)=(\sigma_1 (b\mu\1_{\Omega_1} ),0,\mu )}
\left[
\begin{array}{c}
\phi^{*}\\
\psi^{*} 
\end{array}
\right]
\phi^{*}\,dx \\
=&
\int_{\Omega}[\phi^{*}, \psi^{*}]
\left[
\begin{array} {cc}
-2 & -b\1_{\Omega_{1}} \\
-b\1_{\Omega_{1}} & 0
\end{array}
\right]
\left[
\begin{array}{c}
\phi^{*}\\
\psi^{*} 
\end{array}
\right]
\phi^{*}\,dx\\
=&
-2\int_{\Omega}(\phi^{*}+b\1_{\Omega_1}\psi^{*})(\phi^{*})^2\,dx.
\end{split}
\nonumber
\end{equation}
Together with $\langle F_{\lambda (u,v)}(\sigma_1 (b\mu\1_{\Omega_1} ),0,\mu )[\phi^* ,\psi^* ],\ell_1 \rangle =
\|\phi^{*}\|_{L^2(\Omega)}^{2}=1$ from \eqref{phistar1} and \eqref{Fluv}, we obtain \eqref{direction}. In view of \eqref{psistar1}, we recall that
$\psi^{*}$ depends on $\alpha $ and is given by
\begin{equation}\label{phistar1sai}
    \psi^* (\alpha )=(-\Delta +\mu )^{-1}_{\Omega_1}
\left( 
  \mu \left[ 
    \alpha \left\{ \sigma_1(b\mu\1_{\Omega_1}) - b\mu \right\} + c
  \right] \phi^*
\right).
\end{equation}
Since $\sigma_{1}(b\mu)=b\mu$ and since $\sigma_{1}(q)$ is monotone increasing 
with respect to $q\in L^{\infty}(\Omega)$, we obtain
\[
\sigma_{1}(b\mu\1_{\Omega_{1}})-b\mu
= \sigma_{1}(b\mu\1_{\Omega_{1}})-\sigma_{1}(b\mu)
<0.
\]
Indeed, this follows from the fact that $b\mu\1_{\Omega_{1}}\le b\mu$ in $\Omega$ 
with strict inequality in $\Omega_{0}$.
Taking into account that $\phi^{*}>0$ is independent of $\alpha$ by \eqref{phistar1},
we deduce from \eqref{phistar1sai} that $\psi^{*}(0)>0$ in $\Omega_{1}$, 
that $\psi^{*}(\alpha)$ is monotone decreasing on $\Omega_{1}$ with respect to $\alpha\ge0$, 
and that $\psi^{*}(\alpha)\to -\infty$ uniformly on $\overline{\Omega}_{1}$ as $\alpha\to\infty$.
By \eqref{direction}, we can find $\alpha^*>0$ such that \eqref{direction2} holds. Thus the proof of Lemma \ref{lb1} is complete.
\end{proof}

\begin{rem}
In the case $\mu=0$, the linearized operator satisfies
\[
\mbox{Ker} F_{(u,v)}(0,0,0)=\mbox{Span}\{(1,0),(0,1)\},
\]
and hence $\mbox{Ker} F_{(u,v)}(0,0,0)$ is two-dimensional.
Therefore, the bifurcation theorem from simple eigenvalues
by Crandall-Rabinowitz \cite[Theorem~1.7]{CR1} is not applicable.
Nevertheless, by applying a standard Lyapunov-Schmidt reduction, one can
analyze the bifurcation structure at $\lambda=0$ by decomposing the phase space
into the two-dimensional kernel and its complementary subspace.
This procedure shows that, in addition to the semi-trivial branches, there also
exists a branch of positive solutions emanating from $\lambda=0$, which can be
parameterized in the form $\varGamma_{1,\delta}$.
\end{rem}
Next we investigate the local bifurcation of positive solutions from 
$\varGamma_{u}$ for any fixed $\mu<0$.
\begin{lem}\label{lb2}
Assume that $\mu <0$. Positive solutions of \eqref{sp2} bifurcate from $\varGamma_u$ 
if and only if $\lambda =-\mu /c$. 
In addition, all positive solutions of \eqref{sp2} 
near $(-\mu /c,\lambda ,0)\in \mathbb{R}\times X$ 
can be expressed as
$$
\varGamma_{2, \delta} =\left\{ (\lambda ,u,v)=
\left( \lambda (s),\lambda +s(\phi_* +\widetilde{u}(s)),s(1+\widetilde{v}(s))\right) :s\in (0,\delta )\right\}
$$
for some $\delta >0$. Here $(\lambda (s),\widetilde{u}(s), \widetilde{v}(s))$ are 
smooth functions with respect to $s$ 
and satisfy $(\lambda (0),\widetilde{u}(0), \widetilde{v}(0))=(-\mu /c,0,0)$, $\int_{\Omega_1}\widetilde{v}(s)dx=0$ and
\[\lambda'(0)=\dfrac{c^{2}\|\phi_{*}\|_{L^1(\Omega_{1})}+c|\Omega_{1}|+
\alpha |\mu|(b\,|\Omega_1|-\|\phi_{*}\|_{L^{1}(\Omega_1)})}{c^{2}\,|\Omega_{1}|},\]
which is positive for any $\alpha\ge 0$.
\end{lem}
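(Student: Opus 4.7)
The plan is to mirror the Crandall-Rabinowitz argument used in Lemma~\ref{lb1}, applying \cite[Theorem~1.7]{CR1} to the nonlinear map $F:\mathbb{R}\times X\to Y$ at the semi-trivial branch $\varGamma_{u}$. After the affine shift $\tilde{u}:=u-\lambda,\ \tilde{v}:=v$, the branch $\varGamma_{u}$ corresponds to the trivial solution of the transformed map $\tilde{F}(\lambda,\tilde{u},\tilde{v}):=F(\lambda,\tilde{u}+\lambda,\tilde{v})$, which puts us in the standard hypothesis of Crandall-Rabinowitz. The ``only if'' half of the statement will then follow from the implicit function theorem: for every $\lambda>0$ other than the bifurcation value identified below, the Fr\'echet derivative $F_{(u,v)}(\lambda,\lambda,0)$ is invertible, so no positive solution branches off.

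The linearization at $(u,v)=(\lambda,0)$ is
\[
F_{(u,v)}(\lambda,\lambda,0)\begin{bmatrix}\phi\\ \psi\end{bmatrix}
=\begin{bmatrix}\Delta\phi-\lambda\phi-b\lambda\1_{\Omega_{1}}\psi\\[2pt]
\Delta\psi+\dfrac{\mu+c\lambda}{1+\alpha\lambda}\psi\end{bmatrix},
\]
where the second component is independent of $\phi$ because the factor $v$ in the second nonlinearity of $F$ kills every $\phi$-derivative at $v=0$. If $\psi\equiv 0$, then the first line together with $\lambda>0$ forces $\phi\equiv 0$, so any nontrivial kernel element must have $\psi\not\equiv 0$; the Krein-Rutman principle then forces $\mu+c\lambda=0$, giving the unique bifurcation value $\lambda_{*}=-\mu/c$. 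At this value the kernel is one-dimensional, spanned by $(\phi_{*},1)$ with $\phi_{*}$ as in \eqref{phistar2}, and the Fredholm alternative yields
\[
\mbox{Ran}\,F_{(u,v)}(\lambda_{*},\lambda_{*},0)=\Bigl\{(f,g)\in Y\,:\,\int_{\Omega_{1}}g\,dx=0\Bigr\},
\]
which has codimension one. For the transversality condition, a direct differentiation shows that the second component of $\tilde{F}_{\lambda(\tilde{u},\tilde{v})}(\lambda_{*},0,0)[\phi_{*},1]^{T}$ equals the positive constant $c^{2}/(c-\alpha\mu)$, so the range-complement functional $\ell[(f,g)]:=\int_{\Omega_{1}}g\,dx$ applied to it yields $c^{2}|\Omega_{1}|/(c-\alpha\mu)\neq 0$. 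Crandall-Rabinowitz then produces the local curve $\varGamma_{2,\delta}$ in the stated form, with the normalization $\int_{\Omega_{1}}\widetilde{v}(s)\,dx=0$ coming from the natural choice of complement of $\mbox{Span}\{1\}$ in the $\tilde{v}$-component.

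For the direction formula, I would apply \cite[(I.6.3)]{K}. The Hessian of the first nonlinearity contributes $-2\phi_{*}^{2}-2b\1_{\Omega_{1}}\phi_{*}$, which is annihilated by $\ell$ since $\ell$ sees only the second coordinate. The delicate step is the Hessian of the second nonlinearity at $(\tilde{u},\tilde{v})=(0,0)$: the explicit factor $\tilde{v}$ eliminates all pure $\tilde{u}$-derivatives, and applying the quotient rule to the remaining $\tilde{u}\tilde{v}$ and $\tilde{v}\tilde{v}$ pieces produces the pointwise expression $\tfrac{2}{1+\alpha\lambda_{*}}\bigl[(c-\alpha\lambda_{*})\phi_{*}-(\alpha\lambda_{*}b+1)\bigr]$. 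Substituting into the standard ratio, using $1+\alpha\lambda_{*}=(c-\alpha\mu)/c$, and rewriting $\lambda_{*}=|\mu|/c$ produces, after elementary algebra, the claimed expression for $\lambda'(0)$.

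The main obstacle is to verify that this $\lambda'(0)$ is strictly positive, since $\phi_{*}$ enters with indefinite sign. The decisive tool is an integral identity: integrating the defining equation $(-\Delta+\lambda_{*})\phi_{*}=-b\lambda_{*}\1_{\Omega_{1}}$ over $\Omega$ and using $\int_{\Omega}\Delta\phi_{*}\,dx=0$ from the Neumann boundary condition gives $\int_{\Omega}\phi_{*}\,dx=-b|\Omega_{1}|$. Since $\phi_{*}\le 0$ in $\Omega$ by the Neumann maximum principle, this is precisely $\|\phi_{*}\|_{L^{1}(\Omega)}=b|\Omega_{1}|$, whence $\|\phi_{*}\|_{L^{1}(\Omega_{1})}\le b|\Omega_{1}|$. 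Therefore $b|\Omega_{1}|-\|\phi_{*}\|_{L^{1}(\Omega_{1})}\ge 0$, and combining this with the manifestly positive terms $c^{2}\|\phi_{*}\|_{L^{1}(\Omega_{1})}+c|\Omega_{1}|$ in the numerator yields $\lambda'(0)>0$ for every $\alpha\ge 0$, giving the supercritical bifurcation claimed.
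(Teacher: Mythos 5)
Your proposal is correct and mirrors the paper's own argument in every essential: the same affine shift $U=u-\lambda$ to put $\varGamma_u$ on the trivial branch, the same linearization, kernel $\mathrm{Span}\{(\phi_*,1)\}$, range description via the Fredholm alternative, transversality constant $c^2/(c-\alpha\mu)$, direction formula from Kielh\"ofer, and the integral identity $\|\phi_*\|_{L^1(\Omega)}=b|\Omega_1|$ from integrating the $\phi_*$-equation over $\Omega$. The only cosmetic difference is that you settle for $\phi_*\le 0$ and thus $b|\Omega_1|-\|\phi_*\|_{L^1(\Omega_1)}\ge 0$, whereas the paper uses the strong maximum principle to get $\phi_*<0$ on $\Omega_0$ and hence strict inequality; both suffice because the term $c|\Omega_1|>0$ already forces $\lambda'(0)>0$.
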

\begin{proof}
Let $U=u-\lambda$ in \eqref{sp2} and define a mapping $G:\mathbb{R}\times X\to Y$ by
$$
G(\lambda ,U,v)=\left[
\begin{array}{l}
\Delta U -(U+\lambda )(U+b\1_{\Omega_1}v)\\
\Delta v +\dfrac{v}{1+\alpha (U+\lambda )}
\left\{ -\alpha (U+\lambda )(U+bv)+\mu +c(U+\lambda )-v\right\}
\end{array}
\right].
$$
Then $G(\lambda ,U,v)=(0,0)$ if and only if $(U+\lambda ,v)$ is a solution of \eqref{sp2}. 
In particular, $G(\lambda ,0 ,0)=(0,0)$ for any $\lambda$. 
Simple calculations yield
$$
G_{(U,v)}(\lambda ,0,0)\left[
\begin{array}{l}
\phi \\
\psi
\end{array}
\right]
=\left[
\begin{array}{l}
\Delta \phi -\lambda \phi -b\1_{\Omega_1}\lambda \psi \\
\Delta \psi +\dfrac{\mu +c\lambda}{1+\alpha \lambda}\psi
\end{array}
\right].
$$
It can be verified that $\lambda =-\mu /c$ is the only possible bifurcation point where 
positive solutions of \eqref{sp2} bifurcate from $\varGamma_u$. 
Moreover, we  find 
$$
\mbox{Ker}\, G_{(U,v)}(-\mu /c,0,0)=\mbox{Span}\{ (\phi_* ,1)\}
$$
and
\begin{equation}\label{range2}
\mbox{Ran}\, G_{(U,v)}(-\mu /c,0,0)=
\left\{ (\phi ,\psi )\in Y:\int_{\Omega_1}\psi dx=0\right\} .
\end{equation}
Thus $\mbox{dim}\, \mbox{Ker}\, G_{(U,v)}(-\mu /c,0,0)
=\mbox{codim}\, \mbox{Ran}\, G_{(U,v)}(-\mu /c,0,0)=1$. 
Furthermore, \eqref{range2} yields
\begin{equation}\label{GlUv}
G_{\lambda (U,v)}(-\mu /c,0,0)\left[
\begin{array}{l}
\phi_* \\
1
\end{array}
\right]
=\left[
\begin{array}{l}
-\phi_* -b\1_{\Omega_1}\\
\frac{c^2}{c-\alpha \mu}
\end{array}
\right]
\not\in \mbox{Ran}\, G_{(U,v)}(-\mu /c,0,0).
\end{equation}
Therefore, we can apply the local bifurcation theorem \cite{CR1} to $G$ 
at $(-\mu /c,0,0)$. 
By the direction formula 
at the bifurcation point (e.g., \cite[(I.6.3)]{K}), one can see
\begin{align*}
\lambda^{\prime}(0)
=-\frac{\langle G_{(U,v)(U,v)}(-\mu /c,0,0)[\phi_* ,1]^2 ,\ell_2 \rangle}
{2\langle G_{\lambda (U,v)}(-\mu /c,0,0)[\phi_* ,1],\ell_2 \rangle},
\end{align*}
where the functional $\ell_{2}\in Y^{*}$ is defined by
$\langle[\phi, \psi],\ell_{2}\rangle =\int_{\Omega_1}\psi\,dx$.
Then for 
\[g(U,v):=\dfrac{v}{1+\alpha (U+\lambda)}\{ -\alpha (U+\lambda )(U+bv)+\mu +c(U+\lambda )-v\},\]
straightforward calculation gives
\begin{equation}
\begin{split}
&\langle G_{(U,v)(U,v)}(-\mu/c, 0, 0) )[\phi_*, 1]^2 ,\ell_2 \rangle \\
=&
\int_{\Omega_1}[\phi_{*}, 1]
\left[
\begin{array} {cc}
g_{UU} & g_{Uv} \\
g_{vU} & g_{vv}
\end{array}
\right]\bigg|_{(\lambda, u, v)=(-\mu/c,0, 0 )}
\left[
\begin{array}{c}
\phi_{*}\\
1
\end{array}
\right]
\,dx \\
=&
\int_{\Omega_1}[\phi_{*}, 1]
\left[
\begin{array} {cc}
0 & \frac{c^{2}+\alpha\mu}{c-\alpha\mu} \\
\frac{c^{2}+\alpha\mu}{c-\alpha\mu} & -\frac{2(c-\alpha b\mu)}{c-\alpha\mu}
\end{array}
\right]
\left[
\begin{array}{c}
\phi_{*}\\
1 
\end{array}
\right]
\,dx\\
=&
\dfrac{2}{c-\alpha\mu}\bigg\{
(c^2
+\alpha\mu )\int_{\Omega_1}\phi_{*}\,dx +(-c+\alpha b\mu  )|\Omega_{1}|\biggr\}\\
=&
\dfrac{-2}{c-\alpha \mu}\left\{
c^{2}\|\phi_{*}\|_{L^1(\Omega_{1})}+c|\Omega_{1}|+
\alpha |\mu|(b\,|\Omega_1|-\|\phi_{*}\|_{L^{1}(\Omega_1)})\right\},
\end{split}
\nonumber
\end{equation}
where the last expression is presented taking into account 
the negativity of $\phi_{*}=(-\Delta +\lambda)^{-1}_{\Omega}[-b\1_{\Omega_{1}}\lambda ]$ and $\mu$.
By \eqref{GlUv} we see
\[\langle G_{\lambda (U,v)}(-\mu/c, 0, 0)[\phi_{*}, 1], \ell_{2}\rangle=\dfrac{c^{2}}{c-\alpha\mu}|\Omega_{1}|.\]
Therefore, we obtain 
\[\lambda'(0)=\dfrac{c^{2}\|\phi_{*}\|_{L^1(\Omega_{1})}+c|\Omega_{1}|+
\alpha |\mu|(b\,|\Omega_1|-\|\phi_{*}\|_{L^{1}(\Omega_1)})}{c^{2}\,|\Omega_{1}|}.\]
In order to show that $\lambda'(0)>0$ for any $\alpha\ge 0$,
we recall the equation of $\phi_{*}$:
\[-\Delta\phi_{*}+\lambda\phi_{*}=-b\1_{\Omega_{1}}\lambda
\quad\mbox{in}\ \Omega,\qquad \partial_{n}\phi_{*}=0\quad\mbox{on}\ \partial\Omega.\]
Integrating this elliptic equation over $\Omega$, we see
$\|\phi_{*}\|_{L^{1}(\Omega )}=b|\Omega_{1}|$ thanks to the boundary condition.
Since $\Omega_{1}\subsetneq\Omega$, we can see
\[\|\phi_{*}\|_{L^{1}(\Omega_{1})}<\|\phi_{*}\|_{L^{1}(\Omega )}=b|\Omega_{1}|.\]
Therefore, we find that the bifurcation at $(\lambda, U, v)=(-\mu/c, 0,0)$ is supercritical in the sense of $\lambda'(0)>0$ for any $\alpha\ge 0$.
Then we complete the proof of Lemma \ref{lb2}.
\end{proof}
We are now in a position to complete the proofs of Theorems \ref{exthm1} and \ref{exthm2}. 
We will prove Theorem \ref{exthm1} only 
since the proof of Theorem \ref{exthm2} is similar to that of Theorem \ref{exthm1}.
\begin{proof}[Proof of Theorem \ref{exthm1}]
We first discuss the case $\mu >0$. 
For the local bifurcation branch $\varGamma_{1,\delta}$ obtained in Lemma \ref{lb1}, 
Let $\varGamma_1 \subset \mathbb{R}\times X$ 
denote the maximal connected set satisfying
\begin{equation}\label{pfexthm11}
\begin{split}
\varGamma_{1,\delta}&\subset \varGamma_1\\
&\subset 
\{(\lambda ,u,v)\in (\mathbb{R}\times X)\setminus \{ (\sigma_1 (b\mu\1_{\Omega_1} ),0,\mu )\} 
:(\lambda ,u,v)\mbox{ is a solution of \eqref{sp2}}\} .
\end{split}
\end{equation}
Define 
$P:=\{ (u,v)\in X:u>0\ \mbox{in} \ \overline{\Omega},\ v>0 \ \mbox{in} \ \overline{\Omega}_1 \}$. 
We will prove
\begin{equation}\label{pfexthm12}
\varGamma_1 \subset \mathbb{R}\times P
\end{equation}
by contradiction. Suppose that $\varGamma_1 \not\subset \mathbb{R}\times P$. 
Then we can take a sequence 
$\{ (\lambda_k ,u_k ,v_k )\}_{k=1}^{\infty}\subset \varGamma_1 \cap (\mathbb{R}\times P)$ 
and
\begin{equation}\label{pfexthm13}
(\lambda_{\infty},u_{\infty},v_{\infty}) \in \varGamma_1 \cap (\mathbb{R}\times \partial P)
\end{equation}
such that
$$
\lim_{k\to \infty}(\lambda_k ,u_k ,v_k )=
(\lambda_{\infty},u_{\infty},v_{\infty}) \ \ \mbox{in}\ \mathbb{R}\times X,
$$
where $(u_{\infty},v_{\infty})$ is a nonnegative solution 
of \eqref{sp2} with $\lambda =\lambda_{\infty}$. Due to the strong maximum principle (see e.g., \cite{Bo}), 
one of the following (a)-(c) must occur:
\begin{itemize}
\item[(a)]
$u_{\infty}=0$ in $\overline{\Omega}$,\quad $v_{\infty}=0$ in $\overline{\Omega}_1$.
\item[(b)]
$u_{\infty}>0$ in $\overline{\Omega}$,\quad $v_{\infty}=0$ in $\overline{\Omega}_1$.
\item[(c)]
$u_{\infty}=0$ in $\overline{\Omega}$,\quad $v_{\infty}>0$ in $\overline{\Omega}_1$.
\end{itemize}
Integrating the second equation of \eqref{sp2} with $(\lambda ,u,v)=(\lambda_k ,u_k ,v_k )$, we have
\begin{equation}\label{pfexthm14}
\int_{\Omega_1}\dfrac{v_k}{1+\alpha u_k}
\left\{ \alpha u_k (\lambda_k -u_k -bv_k )+\mu +cu_k -v_k \right\} dx=0
\end{equation}
for any $k\in \mathbb{N}$. If (a) or (b) holds, then the integrand in \eqref{pfexthm14} is 
positive for sufficiently large $k\in \mathbb{N}$ because of 
$0<u_k \le \lambda_k$, $v_k >0$ and $\mu >0$. 
This contradicts \eqref{pfexthm14}. If (c) holds, then
$$
\Delta v_{\infty}+v_{\infty}(\mu -v_{\infty})=0\ \ \mbox{in}\ \Omega_1 ,
\ \ \ \partial_n v_{\infty} =0\ \ \mbox{on}\ \partial \Omega_1 ,
\ \ \ v_{\infty}>0\ \ \mbox{in}\ \overline{\Omega}_1
$$
and thus $v_{\infty}=\mu$ in $\overline{\Omega}_1$. Then 
$(\lambda_{\infty} ,u_{\infty},v_{\infty})=(\sigma_1 (b\mu\1_{\Omega_1} ),0,\mu )$ must hold by Lemma \ref{lb1}. 
But this contradicts \eqref{pfexthm11} and \eqref{pfexthm13}. Hence 
the assertion \eqref{pfexthm12} holds true. 
Thanks to Rabinowitz's global bifurcation theorem \cite{R},
together with its refinement to the unilateral/global alternatives
for $C^1$ Fredholm maps developed in
L{\'o}pez-G{\'o}mez and Mora-Corral \cite{LGMC},
L{\'o}pez-G{\'o}mez \cite{LG16}, and
L{\'o}pez-G{\'o}mez and Sampedro \cite{LGSam}
(see also Shi and Wang \cite{SW}),
$\varGamma_1$ satisfies one of the followings:
\begin{itemize}
\item[(i)]
$\varGamma_1$ is unbounded in $\mathbb{R}\times X$.
\item[(ii)]
$\varGamma_1$ contains a point $(\bar{\lambda},0,\mu )$ 
with $\bar{\lambda}\neq \sigma_1 (b\mu\1_{\Omega_1} )$.
\item[(iii)]
$\varGamma_1$ contains a point $(\tilde{\lambda}, \tilde{\phi}, \tilde{\psi})\in \mathbb{R}\times X$ 
with $(\tilde{\phi}, \tilde{\psi})\neq (0,\mu )$ and $\int_{\Omega}\tilde{\phi}\phi^* dx=0$, 
where $\phi^*$ is the function given by \eqref{phistar1}.
\end{itemize}
On account of \eqref{pfexthm12}, case (ii) cannot occur. 
Case (iii) is also impossible by \eqref{pfexthm12} and $\phi^* >0$. Therefore, case (i) must hold. 
Consequently, the conclusion for the case $\mu >0$ follows from 
\eqref{pfexthm12}, Lemmas \ref{ape} and \ref{lb1} and Proposition \ref{nonexprop1}.

Next we consider the case $\mu =0$. Fix any $\lambda >0$. 
By virtue of the result for the case $\mu >0$, we can find a sequence 
$\{ (\mu_k , u_k ,v_k )\}_{k=1}^{\infty}$ such that $(u_k ,v_k )$ is a positive solution of \eqref{sp2} 
with $\mu =\mu_k$ and $\lim_{k\to \infty}\mu_k =0$. 
Then Lemma \ref{ape} implies that
$$
\lim_{k\to \infty}(u_k ,v_k )=(u_{\infty},v_{\infty}) \ \ \mbox{in}\ 
C^1 (\overline{\Omega})\times C^1 (\overline{\Omega}_1 )
$$
by passing to a subsequence if necessary, where 
$(u_{\infty},v_{\infty})$ is a nonnegative solution of \eqref{sp2} with $\mu =0$. 
By the strong maximum principle, $(u_{\infty},v_{\infty})$ satisfies either 
$u_{\infty}>0$ in $\overline{\Omega}$ and $v_{\infty}>0$ in $\overline{\Omega}_1$ or 
one of (a)-(c) which appeared in the proof for the case $\mu >0$. 
From \eqref{sp2} with $(\mu ,u,v)=(\mu_k ,u_k ,v_k )$, we have
\begin{equation}\label{pfexthm15}
\int_{\Omega}u_k (\lambda -u_k -b\1_{\Omega_1}v_k )dx=0
\end{equation}
and
\begin{equation}\label{pfexthm16}
\int_{\Omega_1}\dfrac{v_k}{1+\alpha u_k}
\left\{ \alpha u_k (\lambda -u_k -bv_k )+\mu_k +cu_k -v_k \right\} dx=0
\end{equation}
for any $k\in \mathbb{N}$. 
Case (a) cannot occur by \eqref{pfexthm15} and $\lambda >0$, 
and case (b) cannot happen by 
\eqref{pfexthm16}, $u_k \le \lambda$ and $\lim_{k\to \infty}\mu_k =0$. 
Case (c) is also impossible by \eqref{pfexthm16} and $\lim_{k\to \infty}\mu_k =0$. 
Thus $u_{\infty}>0$ in $\overline{\Omega}$ and $v_{\infty}>0$ in $\overline{\Omega}_1$ 
must hold. This means the existence of a positive solution of \eqref{sp2} 
with $\mu =0$ for any fixed $\lambda >0$. 
Hence we have completed the proof of Theorem \ref{exthm1}.
\end{proof}
\section{Proof of Theorem \ref{abthm}}
In this section, We will prove Theorem \ref{abthm}. 
We first show the following lemma.
\begin{lem}\label{abthm-l1}
The problem
\begin{equation}\label{LP1}
\begin{cases}
\Delta u+u(\lambda -u-b\1_{\Omega_1}v)=0\ \ &\mbox{in}\ \Omega,\\
\Delta v+v(\lambda -u-bv)=0
\ \ &\mbox{in}\ \Omega_1 ,\\
\partial_n u=0\ \
\ \ &\mbox{on}\ \partial \Omega,\\
\partial_n v=0\ \
\ \ &\mbox{on}\ \partial \Omega_1
\end{cases}
\end{equation}
has no positive solution.
\end{lem}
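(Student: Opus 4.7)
The plan is to exploit the fact that, on $\Omega_1$, both $u$ and $v$ satisfy the common linear elliptic equation $-\Delta\phi = \phi(\lambda-u-bv)$, and then use the decoupling of the $u$-equation on $\Omega_0$ to force a contradiction. I would first suppose, for contradiction, that $(u,v)$ is a positive solution of \eqref{LP1}. Dividing the two equations on $\Omega_1$ by $u$ and $v$ respectively gives $-\Delta u/u = -\Delta v/v = \lambda - u - bv$, whence $u\Delta v - v\Delta u = 0$, equivalently $\nabla\cdot(u^2\nabla w) = 0$ on $\Omega_1$, where $w := v/u > 0$.

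The central step is to show that $w$ is constant on $\overline{\Omega}_1$. Since $\nabla\cdot(u^2\nabla w)=0$ is a linear uniformly elliptic divergence-form equation (with $u$ bounded away from zero on compact $\overline{\Omega}_1$ by the strong maximum principle), any extremum of a non-constant $w$ is attained on $\partial\Omega_1$. On $\partial\Omega\subset\partial\Omega_1$ both $\partial_n u$ and $\partial_n v$ vanish, hence $\partial_n w = 0$ there, and Hopf's boundary point lemma excludes extrema on $\partial\Omega$. A Picone-type computation (multiply $\nabla\cdot(u^2\nabla w)=0$ by $w$ and integrate by parts, using $\partial_n v=0$ on all of $\partial\Omega_1$ and $\partial_n u=0$ on $\partial\Omega$) yields the identity
\[
\int_{\Omega_1}u^2|\nabla w|^2\,dx \;=\; -\int_{\partial\Omega_0}\frac{v^2}{u}\,\partial_n u\,dS,
\]
whose left-hand side is non-negative. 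Combining this with the Hopf analysis on $\partial\Omega_0$ and with the logistic equation $-\Delta u = u(\lambda - u)$ on $\Omega_0$ (which, via interior/boundary maximum principle arguments, constrains the sign of $\partial_n u$ along $\partial\Omega_0$), one concludes that $w \equiv k$ for some constant $k>0$, that is, $v = ku$ on $\Omega_1$.

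Once proportionality is established, the Neumann condition $\partial_n v = k\,\partial_n u = 0$ on $\partial\Omega_1$ forces $\partial_n u = 0$ on $\partial\Omega_0$ as well. Then on $\Omega_0$ the function $u$ solves the logistic Neumann problem $-\Delta u = u(\lambda-u)$ with $\partial_n u = 0$ on $\partial\Omega_0$; a standard maximum-principle argument (interior maxima give $u\le\lambda$, interior minima give $u\ge\lambda$, and Hopf excludes genuine boundary extrema of a non-constant $u$) yields $u \equiv \lambda$ in $\overline{\Omega}_0$, so by continuity $u = \lambda$ on $\partial\Omega_0$. Meanwhile, on $\Omega_1$ the function $u$ satisfies $-\Delta u = u(\lambda - (1+bk)u)$ with homogeneous Neumann data on all of $\partial\Omega_1$. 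Integrating this equation against $1$ and against $1/u$ (using $\Delta\log u = \Delta u/u - |\nabla\log u|^2$), and applying Cauchy--Schwarz in the form $\|u\|_{L^1(\Omega_1)}^2 \le |\Omega_1|\,\|u\|_{L^2(\Omega_1)}^2$, one obtains
\[
\int_{\Omega_1}|\nabla\log u|^2\,dx \;=\; (1+bk)\int_{\Omega_1}u\,dx - \lambda|\Omega_1| \;\le\; 0,
\]
so $u$ must be constant on $\Omega_1$ and the pointwise equation then fixes it to $u \equiv \lambda/(1+bk)$. But continuity of $u\in W^{2,p}(\Omega)\hookrightarrow C(\overline{\Omega})$ across $\partial\Omega_0$ would give $\lambda = \lambda/(1+bk)$, contradicting $b,k>0$.

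The most delicate part of the argument is establishing constancy of $w$ in the second paragraph: the boundary term $\int_{\partial\Omega_0}(v^2/u)\partial_n u\,dS$ has no obvious sign a priori, so the proof must either pin down its sign by coupling the analysis on $\Omega_1$ with the logistic equation on $\Omega_0$ across $\partial\Omega_0$, or eliminate the case of a non-constant $w$ directly via Hopf's lemma applied at extrema located on $\partial\Omega_0$. I expect this interface argument to be the main technical hurdle; all subsequent steps (the $\Omega_0$ Neumann argument and the log-integral/Cauchy--Schwarz computation on $\Omega_1$) are comparatively routine.
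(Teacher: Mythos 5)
Your proposal contains a genuine gap at its central step, and you acknowledge this yourself. The entire argument hinges on showing that $w=v/u$ is constant on $\Omega_1$, and the Picone identity you derive,
\[
\int_{\Omega_1}u^2|\nabla w|^2\,dx \;=\; -\int_{\partial\Omega_0}\frac{v^2}{u}\,\partial_n u\,dS,
\]
requires the right-hand side to be $\le 0$, i.e.\ $\int_{\partial\Omega_0}(v^2/u)\partial_n u\,dS\ge 0$. From the divergence theorem on $\Omega_0$ together with $u\le\lambda$ one can show the \emph{unweighted} integral $\int_{\partial\Omega_0}\partial_n u\,dS\ge 0$, but the weight $v^2/u$ varies along $\partial\Omega_0$, so this gives no sign information for the weighted integral; nothing in the logistic equation on $\Omega_0$ alone pins down the pointwise sign of $\partial_n u$ across the interface. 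Likewise, the Hopf-lemma variant you sketch only tells you that a boundary extremum of $w$ on $\partial\Omega_0$ would force a specific sign of $\partial_n u$ at that point, which does not produce a contradiction without further global input. So as written, the key step is not established, and I do not see an elementary way to fill it along these lines. (Your subsequent steps, assuming $w\equiv k$, are fine: the Neumann logistic problem on $\Omega_0$ forces $u\equiv\lambda$ there, while the integral identity on $\Omega_1$ forces $u\equiv\lambda/(1+bk)$ there, and continuity of $u\in C^1(\overline\Omega)$ gives the contradiction.)

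The paper avoids the constancy-of-$w$ issue entirely and uses a shorter direct energy argument. Multiplying the $u$-equation by $\lambda/u-1$ and integrating over $\Omega$ (with the Neumann condition on $\partial\Omega$) gives
\[
0=\lambda\int_\Omega\frac{|\nabla u|^2}{u^2}\,dx+\int_\Omega(\lambda-u)^2\,dx-b\int_{\Omega_1}v(\lambda-u)\,dx.
\]
Integrating the $v$-equation over $\Omega_1$ gives $b\int_{\Omega_1}v(\lambda-u)\,dx=b^2\int_{\Omega_1}v^2\,dx$. Adding the two identities and completing the square via $b^2v^2-2bv(\lambda-u)=\{bv-(\lambda-u)\}^2-(\lambda-u)^2$ yields
\[
0\ge\lambda\int_\Omega\frac{|\nabla u|^2}{u^2}\,dx+\int_{\Omega_0}(\lambda-u)^2\,dx,
\]
which forces $u\equiv\lambda$ and hence $v\equiv 0$. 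Note that this argument deliberately lets the protection zone $\Omega_0$ absorb the defect when the negative square $-(\lambda-u)^2$ is subtracted only over $\Omega_1$, which is precisely why the nonexistence conclusion requires $\Omega_0\ne\emptyset$. If you want to rescue your approach, you would need to establish a genuinely new estimate on $\partial_n u$ along $\partial\Omega_0$; but given that the paper's proof is both shorter and avoids any interface analysis, I would recommend adopting the energy-identity method instead.
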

\begin{proof}
Let $(u,v)$ be any nonnegative solution of \eqref{LP1} with $u\not\equiv 0$. 
Then $u>0$ in $\overline{\Omega}$ by the strong maximum principle. 
Multiplying the first equation of \eqref{LP1} by $(\lambda /u)-1$ 
and integrating the resulting equation over $\Omega$, we have
\begin{align}\label{LP1-1}
0&=\lambda \left( \int_{\Omega}\dfrac{|\nabla u|^2}{u^2}dx
+\int_{\Omega}(\lambda -u-b\1_{\Omega_1}v)\,dx \right)
-\int_{\Omega}u(\lambda -u-b\1_{\Omega_1}v)\,dx\nonumber\\
&=\lambda \int_{\Omega}\dfrac{|\nabla u|^2}{u^2}dx
+\int_{\Omega}(\lambda -u)^2 dx
-b\int_{\Omega_1}v(\lambda -u)\,dx.
\end{align}
On the other hand,
\begin{equation}\label{LP1-2}
\begin{split}
b^2 \int_{\Omega_1}v^2 dx-b\int_{\Omega_1}v(\lambda -u)\,dx
&=-b\int_{\Omega_1}v(\lambda -u-bv)\,dx\\
&=b\int_{\Omega_1}\Delta v\,dx =b\int_{\partial\Omega_1}\partial_n v\,dS =0.
\end{split}
\end{equation}
Hence we see from \eqref{LP1-1} and \eqref{LP1-2} that
\begin{align*}
0&=\lambda \int_{\Omega}\dfrac{|\nabla u|^2}{u^2}dx
+\int_{\Omega}(\lambda -u)^2 dx
+b^2 \int_{\Omega_1}v^2 dx-2b\int_{\Omega_1}v(\lambda -u)\,dx\\
&\ge \lambda \int_{\Omega}\dfrac{|\nabla u|^2}{u^2}dx
+\int_{\Omega}(\lambda -u)^2 dx
-\int_{\Omega_1}(\lambda -u)^2 dx\\
&=\lambda \int_{\Omega}\dfrac{|\nabla u|^2}{u^2}dx
+\int_{\Omega_0}(\lambda -u)^2 dx,
\end{align*}
where we have used the fact that
$$
b^2 v^2 -2bv(\lambda -u)
=\{ bv-(\lambda -u)\}^2 -(\lambda -u)^2 \ge -(\lambda -u)^2 .
$$
Therefore, $u\equiv \lambda$ in $\overline{\Omega}$ must hold. 
This implies that $(u,v)=(\lambda ,0)$ 
since $(u,v)$ satisfies \eqref{LP1}. 
Thus the proof is complete.
\end{proof}
Next we prove the following a priori estimates.
\begin{lem}\label{abthm-l2}
Let $(w,v)$ be any positive solution of \eqref{LP2}.
Then it holds that
\[\mu>0,\quad \lambda<\sigma_{1}(b\mu\1_{\Omega_{1}})\quad\mbox{and}\quad
\dfrac{\lambda}{b}<v(x)<\mu\quad\mbox{for all}\quad x\in \overline{\Omega}_1.\]
\end{lem}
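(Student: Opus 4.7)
The plan is to extract pointwise bounds on $v$ from the second equation of \eqref{LP2} via the elementary fractional inequality \eqref{qrs}, exactly as already exploited in the proof of Lemma~\ref{ape}, and then to exploit the fact that the first equation of \eqref{LP2} identifies $\lambda$ with the principal Neumann eigenvalue $\sigma_{1}(b\1_{\Omega_{1}}v,\Omega)$. From there the three conclusions $\lambda/b<v<\mu$, $\mu>0$ and $\lambda<\sigma_{1}(b\mu\1_{\Omega_{1}})$ drop out in sequence.

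First I would pick $x_{0},x_{1}\in\overline{\Omega}_{1}$ realising the maximum and minimum of $v$ on $\overline{\Omega}_{1}$, respectively. Applying the Neumann maximum principle in the spirit of \cite[Proposition~2.1]{LN1} (which is valid across both components of $\partial\Omega_{1}=\partial\Omega\cup\partial\Omega_{0}$) to the second equation of \eqref{LP2} yields the sign conditions
\[
w(x_{0})(\lambda-bv(x_{0}))+\mu-v(x_{0})\ge 0,\qquad
w(x_{1})(\lambda-bv(x_{1}))+\mu-v(x_{1})\le 0.
\]
Solving each inequality for $v(x_{i})$ and invoking \eqref{qrs} with $q=b$, $r=\lambda$, $s=\mu$, $\xi=w(x_{i})\ge 0$ immediately gives the a priori bounds $\min\{\lambda/b,\mu\}\le v(x)\le\max\{\lambda/b,\mu\}$ on $\overline{\Omega}_{1}$.

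Next I would rule out the case $\mu\le\lambda/b$, which automatically covers every $\mu\le 0$ since $\lambda>0$. Under this assumption the upper bound reads $v\le\lambda/b$ on $\overline{\Omega}_{1}$, hence $b\1_{\Omega_{1}}v\le\lambda\1_{\Omega_{1}}$ on $\Omega$. The first equation of \eqref{LP2} together with the positivity of $w$ identifies $\lambda$ as the principal Neumann eigenvalue $\lambda=\sigma_{1}(b\1_{\Omega_{1}}v,\Omega)$ by the Krein--Rutman theorem. The monotonicity of $\sigma_{1}$ in its potential then gives $\lambda\le\sigma_{1}(\lambda\1_{\Omega_{1}})$, which contradicts the bound $\sigma_{1}(\lambda\1_{\Omega_{1}})<\lambda$ supplied by Lemma~\ref{du-shi} (applied with the parameter $b\mu$ there replaced by $\lambda>0$). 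Therefore $\mu>\lambda/b>0$, which in particular gives $\mu>0$.

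With $\mu>\lambda/b$ established, I would upgrade the bounds to strict inequalities by inspecting the extremal cases: equality $v(x_{0})=\mu$ in the upper bound forces $b\mu\le\lambda$ after using $w(x_{0})>0$, and equality $v(x_{1})=\lambda/b$ in the lower bound forces $\lambda/b\ge\mu$; both contradict $\mu>\lambda/b$. Hence $\lambda/b<v<\mu$ pointwise on $\overline{\Omega}_{1}$. Finally, this strict inequality gives $b\1_{\Omega_{1}}v\le b\mu\1_{\Omega_{1}}$ on $\Omega$ with $b\1_{\Omega_{1}}v\not\equiv b\mu\1_{\Omega_{1}}$, so the strict monotonicity property~(iii) of $\sigma_{1}$ recalled in Section~3 yields $\lambda=\sigma_{1}(b\1_{\Omega_{1}}v)<\sigma_{1}(b\mu\1_{\Omega_{1}})$, completing the plan. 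The main conceptual step is the eigenvalue identification $\lambda=\sigma_{1}(b\1_{\Omega_{1}}v)$; once this is in hand, everything else is essentially algebraic manipulation, and the only mild technicality is that $x_{0},x_{1}$ may lie on $\partial\Omega$ or $\partial\Omega_{0}$, which is handled uniformly by the Neumann strong maximum principle or Hopf's lemma.
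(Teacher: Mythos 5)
Your proposal is correct and follows essentially the same line of argument as the paper: a Lou--Ni--type maximum principle at the extrema of $v$, the fractional inequality \eqref{qrs} to squeeze $v$ between $\lambda/b$ and $\mu$, the identification $\lambda=\sigma_{1}(b\1_{\Omega_{1}}v)$ from the $w$-equation, and the structural bound $\sigma_{1}(\xi\1_{\Omega_{1}})<\xi$ from Lemma~\ref{du-shi} to close the argument. The only difference is organizational: the paper cases on $\lambda<b\mu$, $\lambda>b\mu$, $\lambda=b\mu$ and reads off strictness from the strict monotonicity of $\xi\mapsto(\lambda\xi+\mu)/(b\xi+1)$, whereas you collapse the latter two cases into a single contradiction argument under $\mu\le\lambda/b$ and then upgrade the pointwise bounds to strict inequalities by a direct contradiction at the extremal points; both are equivalent in content and length.
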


\begin{proof}
Let $v(x_0 )=\max_{\overline{\Omega}_1}v$ and $v(y_0 )=\min_{\overline{\Omega}_1}v$ 
with $x_0 ,y_0 \in \overline{\Omega}_1$. 
Applying the maximum principle to the second equation of \eqref{LP2}, we have
\begin{equation}\label{LP2-1}
\dfrac{v(x_0 )}{1+w(x_0 )}\left\{ w(x_0 )\left( \lambda -b\max_{\overline{\Omega}_1}v\right)
+\mu -\max_{\overline{\Omega}_1}v\right\} \ge 0
\end{equation}
and
\begin{equation}\label{LP2-2}
\dfrac{v(y_0 )}{1+w(y_0 )}\left\{ w(y_0 )\left( \lambda -b\min_{\overline{\Omega}_1}v\right)
+\mu -\min_{\overline{\Omega}_1}v\right\} \le 0.
\end{equation}
Then we see from \eqref{qrs}, \eqref{LP2-1} and \eqref{LP2-2} that
\begin{equation}\label{vest}
\min \left\{ \dfrac{\lambda}{b},\, \mu \right\}
\le \dfrac{\lambda w(y_0 )+\mu}{bw(y_0 )+1}
\le \min_{\overline{\Omega}_1}v
\le \max_{\overline{\Omega}_1}v
\le \dfrac{\lambda w(x_0 )+\mu}{bw(x_0 )+1}
\le \max \left\{ \dfrac{\lambda}{b},\, \mu \right\} .
\end{equation}
In particular,
\begin{equation}\label{2vest2}
\min \left\{ \dfrac{\lambda}{b},\, \mu \right\}
<\min_{\overline{\Omega}_1}v
\le \max_{\overline{\Omega}_1}v
<\max \left\{ \dfrac{\lambda}{b},\, \mu \right\}
\end{equation}
if $\lambda \neq b\mu$. 
From the first equation of \eqref{LP2}, 
$\lambda =\sigma_1 (b\1_{\Omega_1}v)$ holds. 
If $\lambda <b\mu$, then $\mu >(\lambda /b)>0$ by assumption, and
$$
\dfrac{\lambda}{b}<v(x)<\mu\quad\mbox{for all}\quad x\in \overline{\Omega}_1 ,\quad \lambda =\sigma_1 (b\1_{\Omega_1}v)
<\sigma_1 (b\mu\1_{\Omega_1} )
$$
by \eqref{2vest2}. If $\lambda >b\mu$, then
$$
\lambda =\sigma_1 (b\1_{\Omega_1}v)
<\sigma_1 \left( b\1_{\Omega_1}\cdot \dfrac{\lambda}{b}\right)
<\sigma_1 (\lambda )=\lambda
$$
by \eqref{2vest2}. But this is impossible. 
If $\lambda =b\mu$, then 
\eqref{vest} yields
$$
\mu \le \min_{\overline{\Omega}_1}v\le \max_{\overline{\Omega}_1}v\le \mu .
$$
This means that $v\equiv \mu$ in $\overline{\Omega}_1$ and thus
$$
\lambda =\sigma_1 (b\1_{\Omega_1}v)
=\sigma_1 (b\mu\1_{\Omega_1} )
<\sigma_1 (b\mu )
=b\mu =\lambda .
$$
But this is also impossible. 
Hence the proof is complete.
\end{proof}
By virtue of Lemmas \ref{ape}, \ref{abthm-l1} and \ref{abthm-l2}, 
the following lemma holds true.
\begin{lem}\label{abthm-l4}
Let $\{ (u_k ,v_k )\}_{k=1}^{\infty}$ be any sequence such that 
$(u_k ,v_k )$ is a positive solution of \eqref{sp2} with $\alpha =\alpha_k$ 
and $\lim_{k\to \infty}\alpha_k =\infty$.
\begin{enumerate}[{\rm (i)}]
\item
If $\{ \alpha_k \| u_k \|_{L^{\infty}(\Omega )}\}_{k=1}^{\infty}$ is unbounded, then
$$
\lim_{k\to \infty}(u_k ,v_k )=(\lambda ,0)\ 
\mbox{in}\ C^1 (\overline{\Omega})\times C^1 (\overline{\Omega}_1 ).
$$
\item
If $\{ \alpha_k \| u_k \|_{L^{\infty}(\Omega )}\}_{k=1}^{\infty}$ is bounded, then 
$\mu >0$ and
$$
\lim_{k \to \infty}(\alpha_k u_k ,v_k )=(w_{\infty},v_{\infty})\ 
\mbox{in}\ C^1 (\overline{\Omega})\times C^1 (\overline{\Omega}_1 )
$$
by passing to a subsequence if necessary. 
Here $(w_{\infty},v_{\infty})$ satisfies \eqref{LP2} and either
$$
{\rm (A)}\ \ w_{\infty}>0 \ \mbox{in} \ \overline{\Omega},\ 
v_{\infty}>0 \ \mbox{in} \ \overline{\Omega}_1 ,\ \lambda <\sigma_1 (b\mu\1_{\Omega_1} )
$$
or
$$
{\rm (B)}\ \ (w_{\infty},v_{\infty})=(0,\mu ),\ \lambda =\sigma_1 (b\mu\1_{\Omega_1} )
$$
holds.
\end{enumerate}
\end{lem}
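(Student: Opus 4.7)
The strategy rests on the uniform a priori bounds of Lemma~\ref{ape}, the nonexistence result for \eqref{LP1} in Lemma~\ref{abthm-l1}, and the qualitative properties of positive solutions of \eqref{LP2} collected in Lemma~\ref{abthm-l2}. By Lemma~\ref{ape}, $(u_k,v_k)$ is uniformly bounded in $C^{1,\theta}(\overline{\Omega})\times C^{1,\theta}(\overline{\Omega}_1)$ independently of $\alpha_k$, so along a subsequence $(u_k,v_k)\to(u_\infty,v_\infty)$ in $C^1$. The proof then proceeds case by case on the behavior of the rescaled prey $w_k:=\alpha_k u_k$, which is precisely the dichotomy stated in the lemma.

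In case (ii), the first equation of \eqref{sp2} rewrites as $\Delta w_k+w_k(\lambda-u_k-b\1_{\Omega_1}v_k)=0$; the $L^\infty$-bound on $w_k$ and elliptic $L^p$-regularity yield, along a subsequence, $w_k\to w_\infty$ in $C^1$. Since $u_k=w_k/\alpha_k\to0$, passing to the limit in \eqref{sp2} shows that $(w_\infty,v_\infty)$ solves \eqref{LP2}. Bony's strong maximum principle \cite{Bo}, valid despite the discontinuity of $\1_{\Omega_1}$, gives the standard dichotomy: either both components are strictly positive (alternative (A), by Lemma~\ref{abthm-l2}, which also provides $\mu>0$ and $\lambda<\sigma_1(b\mu\1_{\Omega_1})$), or at least one vanishes identically. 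If $w_\infty\equiv0$, the $v_\infty$-equation reduces to $\Delta v_\infty+v_\infty(\mu-v_\infty)=0$ with Neumann data, whose only nonnegative solutions are $0$ and the constant $\mu$; the trivial option $v_\infty\equiv0$ is excluded by normalizing $\widehat u_k:=u_k/\|u_k\|_{L^\infty}$ and noting that the limit $\Delta\widehat u+\lambda\widehat u=0$ with $\widehat u\geq0$ and Neumann data has no nontrivial solution when $\lambda>0$. Hence $v_\infty\equiv\mu>0$, and the same normalized first equation now yields $\Delta\widehat u+\widehat u(\lambda-b\mu\1_{\Omega_1})=0$ with $\widehat u>0$, characterizing $\lambda=\sigma_1(b\mu\1_{\Omega_1})$, which is alternative (B).

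In case (i), extract a further subsequence with $\alpha_k\|u_k\|_{L^\infty}\to\infty$. Bony's strong maximum principle applied to $u_\infty$ gives the alternative $u_\infty>0$ in $\overline{\Omega}$ or $u_\infty\equiv0$. If $u_\infty>0$, then $\alpha_k u_k\to\infty$ uniformly, and the algebraic identity
\[
\frac{\alpha u(\lambda-u-bv)+\mu+cu-v}{1+\alpha u}=(\lambda-u-bv)+\frac{(\mu-\lambda)+(c+1)u+(b-1)v}{1+\alpha u}
\]
shows that the perturbation term vanishes uniformly in the limit, so $(u_\infty,v_\infty)$ solves \eqref{LP1}, and Lemma~\ref{abthm-l1} forces $(u_\infty,v_\infty)=(\lambda,0)$. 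To exclude $u_\infty\equiv0$, normalize $\widetilde u_k:=u_k/\|u_k\|_{L^\infty}$; the rescaled first equation gives $\widetilde u_k\to\widetilde u$ in $C^1$ with $\widetilde u\geq0$, $\|\widetilde u\|_{L^\infty}=1$, and $\Delta\widetilde u+\widetilde u(\lambda-b\1_{\Omega_1}v_\infty)=0$. The strong maximum principle gives $\widetilde u>0$, hence $\lambda=\sigma_1(b\1_{\Omega_1}v_\infty)$. Since $\alpha_k u_k=(\alpha_k\|u_k\|_{L^\infty})\widetilde u_k\to\infty$ uniformly, the second equation limit is $\Delta v_\infty+v_\infty(\lambda-bv_\infty)=0$ in $\Omega_1$ under Neumann data, and a standard maximum/minimum argument forces $v_\infty\equiv0$ or $v_\infty\equiv\lambda/b$; both contradict $\lambda=\sigma_1(b\1_{\Omega_1}v_\infty)$ since $\lambda>0$, using $\sigma_1(0)=0$ in the first case and $\sigma_1(\lambda\1_{\Omega_1})<\lambda$ from Lemma~\ref{du-shi} in the second.

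The main obstacle lies in ruling out $u_\infty\equiv0$ in case (i): it requires a second rescaling by $\|u_k\|_{L^\infty}$, different in purpose from the scaling $w_k=\alpha_k u_k$ of case (ii), and exploits both the strict positivity of the normalized limit $\widetilde u$ (to ensure $\alpha_k u_k\to\infty$ uniformly even though $u_k\to0$) and the strict monotonicity $\sigma_1(b\mu\1_{\Omega_1})<b\mu$ from Lemma~\ref{du-shi} to close the contradiction.
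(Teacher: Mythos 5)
Your proof is correct in substance, but your treatment of part (i) is genuinely different from the paper's and worth contrasting. The paper's proof of (i) relies on the Harnack inequality applied to the equation $\Delta(\alpha_k u_k)+\alpha_k u_k(\lambda-u_k-b\1_{\Omega_1}v_k)=0$, and because the potential jumps across $\partial\Omega_0$, it must invoke the Lou--Ni generalization (resting on Stampacchia's estimates for $L^{N/2}$ potentials) to conclude that $\min_{\overline{\Omega}}\alpha_k u_k$ blows up along with the maximum; only with this uniform blow-up can it pass to the limit system \eqref{LP1} and then rule out the residual alternatives $(0,0)$ and $(0,\lambda/b)$ by integral contradictions. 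You bypass Harnack entirely by splitting on the Bony dichotomy for $u_\infty$: if $u_\infty>0$ on $\overline{\Omega}$, uniform blow-up of $\alpha_k u_k$ is automatic since $\alpha_k\to\infty$; if $u_\infty\equiv0$, you normalize $\widetilde u_k=u_k/\|u_k\|_{L^\infty(\Omega)}$, obtain $\widetilde u>0$ via Bony's strong maximum principle, and read off uniform blow-up from $\alpha_k u_k=(\alpha_k\|u_k\|_{L^\infty})\widetilde u_k$. You then exclude $v_\infty\in\{0,\lambda/b\}$ by playing the eigenvalue identity $\lambda=\sigma_1(b\1_{\Omega_1}v_\infty)$ against $\sigma_1(0)=0$ and the strict inequality $\sigma_1(\lambda\1_{\Omega_1})<\lambda$ from Lemma~\ref{du-shi}. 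This is a genuine simplification: it removes the Lou--Ni/Stampacchia Harnack machinery that the paper explicitly flags as a technical subtlety caused by the protection zone.

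There is one small but real gap in part (ii). After invoking the strong-maximum dichotomy you only treat the sub-case $w_\infty\equiv 0$, leaving $w_\infty>0$, $v_\infty\equiv 0$ unaddressed. It is easy to rule out: when $v_\infty\equiv 0$, the first equation of \eqref{LP2} reduces to $\Delta w_\infty+\lambda w_\infty=0$ with homogeneous Neumann data, and integrating over $\Omega$ gives $\lambda\int_\Omega w_\infty\,dx=0$, forcing $w_\infty\equiv0$ since $\lambda>0$; but you should say so. Alternatively, you could adopt the paper's ordering and first rule out $v_\infty\equiv0$ directly from the pre-limit identity $0=\int_\Omega u_k(\lambda-u_k-b\1_{\Omega_1}v_k)\,dx$, whose integrand is strictly positive for large $k$ when $u_k,v_k\to 0$ uniformly; this would also spare you the separate normalization argument you used to exclude $v_\infty\equiv0$ within the $w_\infty\equiv0$ sub-case.
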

\begin{proof}
We first prove part (i). It follows from Lemma \ref{ape} that
$$
\lim_{k \to \infty}(u_k ,v_k )=(u_{\infty},v_{\infty})\ 
\mbox{in}\ C^1 (\overline{\Omega})\times C^1 (\overline{\Omega}_1 )
$$
for a pair of nonnegative functions $(u_{\infty},v_{\infty})\in 
C^1 (\overline{\Omega})\times C^1 (\overline{\Omega}_1 )$ 
by passing to a subsequence if necessary. 
Since $\alpha_k u_k$ is a positive solution of
$$
\Delta (\alpha_k u_k )+\alpha_k u_k (\lambda -u_k -b\1_{\Omega_1}v_k )=0
\ \ \mbox{in}\ \Omega ,\quad \partial_n (\alpha_k u_k )=0
\ \ \mbox{on}\ \partial \Omega
$$
and $\{ \| \lambda -u_k -b\1_{\Omega_1}v_k \|_{L^{p}(\Omega )}\}_{k=1}^{\infty}$ is bounded 
for any $p>1$ by Lemma \ref{ape}, 
the application of the Harnack inequality leads to
$$
\max_{\overline{\Omega}}\alpha_k u_k 
\le C\min_{\overline{\Omega}}\alpha_k u_k
$$
for some positive constant $C$ independent of $k$. 
In contrast to the standard application of the
Harnack inequality, we emphasize that the associated potential function
\[
\widetilde{K}^{(1)}_{k}(x) := \lambda -u_k -b\1_{\Omega_1}v_k
\]
may exhibit a discontinuous jump of size $bv_{k}(x)$ across 
the interface $\partial\Omega_{0}\,(\subset\Omega)$.
Nevertheless, even in this situation, one can invoke 
the generalized Harnack inequality due to Lou and Ni 
\cite[Lemma~3.1]{LN2}, whose proof relies on the classical 
framework of Stampacchia \cite{Stampacchia1965}, in particular 
on the Harnack inequality established there for divergence-form 
operators with merely bounded measurable coefficients and 
potential functions in $L^{N/2}(\Omega)$.

Thus $\min_{\overline{\Omega}}\alpha_k u_k$ is also unbounded. 
Then letting $k\to \infty$ in \eqref{sp2} 
with $(\alpha ,u,v)=(\alpha_k ,u_k ,v_k )$, we see that 
$(u_{\infty},v_{\infty})$ is a nonnegative solution of \eqref{LP1}. 
By Lemma \ref{abthm-l1} and the strong maximum principle, 
one of the following (a)-(c) must occur:
$$
\mbox{(a)}\ (u_{\infty},v_{\infty})=(0,0),\qquad
\mbox{(b)}\ (u_{\infty},v_{\infty})=(\lambda ,0),\qquad
\mbox{(c)}\ (u_{\infty},v_{\infty})=\left( 0,\dfrac{\lambda}{b}\right) .
$$
Integrating the first equation of \eqref{sp2} with $(u,v)=(u_k ,v_k )$, 
we have
\begin{equation}\label{abthm-l4-1}
\int_{\Omega}u_k (\lambda -u_k -b\1_{\Omega_1}v_k )\,dx=0
\end{equation}
for any $k\in \mathbb{N}$. If (a) holds, then
$$
\int_{\Omega}u_k (\lambda -u_k -b\1_{\Omega_1}v_k )\,dx>0
$$
for sufficiently large $k$. This contradicts \eqref{abthm-l4-1}. 
Assume that (c) holds and 
set $\tilde{u}_k =u_k /\| u_k \|_{L^{\infty}(\Omega )}>0$. 
Then
$$
\Delta \tilde{u}_k +\tilde{u}_k (\lambda -u_k -b\1_{\Omega_1}v_k )=0
\ \ \mbox{in}\ \Omega ,\quad 
\partial_n \tilde{u}_k =0\ \ \mbox{on}\ \partial \Omega ,
\quad \| \tilde{u}_k \|_{L^{\infty}(\Omega )}=1.
$$
Letting $k\to \infty$ in the above equation, 
we find from Lemma \ref{ape} that
$$
\Delta \tilde{u}_{\infty}+
\tilde{u}_{\infty}\left( \lambda -b\1_{\Omega_1}\cdot \frac{\lambda}{b}\right) =0
\ \ \mbox{in}\ \Omega ,\quad 
\partial_n \tilde{u}_{\infty}=0\ \ \mbox{on}\ \partial \Omega ,
\quad \| \tilde{u}_{\infty}\|_{L^{\infty}(\Omega )}=1
$$
for some nonnegative function 
$\tilde{u}_{\infty}\in C^1 (\overline{\Omega})$. 
By $\| \tilde{u}_{\infty}\|_{L^{\infty}(\Omega )}=1$ and the strong maximum principle, 
$\tilde{u}_{\infty}>0$ in $\overline{\Omega}$ holds. 
But this leads to a contradiction:
$$
0=\int_{\Omega}\tilde{u}_{\infty}
\left( \lambda -b\1_{\Omega_1}\cdot \frac{\lambda}{b}\right) dx
=\lambda \int_{\Omega_0}\tilde{u}_{\infty}\,dx>0.
$$
Hence we have proved part (i).

Next we prove part (ii). By Lemma \ref{ape}, there exists 
a pair of nonnegative functions $(w_{\infty},v_{\infty})\in 
C^1 (\overline{\Omega})\times C^1 (\overline{\Omega}_1 )$ 
such that
$$
\lim_{k \to \infty}(u_k ,\alpha_k u_k ,v_k )=(0,w_{\infty},v_{\infty})\ 
\mbox{in}\ C^1 (\overline{\Omega})\times 
C^1 (\overline{\Omega})\times C^1 (\overline{\Omega}_1 )
$$
by passing to a subsequence if necessary, 
and $(w_{\infty},v_{\infty})$ satisfies \eqref{LP2}. 
If $v_{\infty} =0$ in $\overline{\Omega}_1$, then
$$
0=\int_{\Omega}u_k (\lambda -u_k -b\1_{\Omega_1}v_k )\,dx>0
$$
for sufficiently large $k$. But this is impossible. 
Thus $v_{\infty}>0$ in $\overline{\Omega}_1$ by the strong maximum principle. 
Moreover, either $w_{\infty}>0$ in $\overline{\Omega}$ 
or $w_{\infty}=0$ in $\overline{\Omega}$ holds 
by the strong maximum principle again. 
If $w_{\infty}>0$, then Lemma \ref{abthm-l2} yields 
$\mu >0$ and $\lambda <\sigma_1 (b\mu\1_{\Omega_1} )$. 
Assume that $w_{\infty}=0$. 
In this case, the second equation of \eqref{LP2} implies
$$
\Delta v_{\infty}+
v_{\infty}(\mu -v_{\infty}) =0
\ \ \mbox{in}\ \Omega_1 ,\quad 
\partial_n v_{\infty}=0\ \ \mbox{on}\ \partial \Omega_1 .
$$
Hence $v_{\infty}\equiv \mu$ in $\overline{\Omega}_1$ 
and $\mu >0$ must hold because of $v_{\infty}>0$. 
Then by the same argument as in the proof of part (i), 
we can show the existence of a positive function 
$\tilde{u}_{\infty}\in C^1 (\overline{\Omega})$ satisfying
$$
\Delta \tilde{u}_{\infty}+
\tilde{u}_{\infty}( \lambda -b\mu\1_{\Omega_1} ) =0
\ \ \mbox{in}\ \Omega ,\quad 
\partial_n \tilde{u}_{\infty}=0\ \ \mbox{on}\ \partial \Omega .
$$
This means $\lambda =\sigma_1 (b\mu\1_{\Omega_1} )$. 
Therefore, we have completed the proof of part (ii).
\end{proof}
The conclusion of Theorem \ref{abthm} immediately follows 
from Lemma \ref{abthm-l4}.
\section{Proof of Theorem \ref{LP2thm}}
For the local bifurcation branch $\varGamma$ obtained in Lemma, 
let $\varGamma \subset \mathbb{R}\times X$ denote the maximal connected set satisfying
$$
\varGamma \subset 
\{ (\lambda ,w,v)\in (\mathbb{R}\times X)\setminus \{ (\sigma_1 (b\mu\1_{\Omega_1} ) ,0,\mu )\}
:(\lambda ,w,v)\ \mbox{is a solution of \eqref{LP2}}\} .
$$
By an argument essentially parallel to the global bifurcation analysis in Theorem \ref{exthm1},
one can verify that 
$\varGamma \subset \mathbb{R}\times P_{\Omega}\times P_{\Omega_1}$ 
with $P_O =\{ w \in W^{2,p}_n (O):w>0\ \mbox{in}\ \overline{O}\}$
($O=\Omega\ \mbox{or}\ \Omega_{1}$)
is unbounded in $\mathbb{R}\times X$.  
By virtue of Lemmas \ref{abthm-l2} and the elliptic regularity theory,
we know that
the $(\lambda ,v)$ component of $\varGamma$ is uniformly bounded in 
$\mathbb{R}\times W^{2,p} (\Omega_1 )$. 
Hence the $w$ component of $\varGamma \subset \mathbb{R}\times P_{\Omega}\times P_{\Omega_1}$ must blow up at some finete value of
$\lambda$.
In order to show that the blow-up point is $\lambda=0$, we set
\[\lambda_{*}:=\inf\{\lambda\,:\,(\lambda, w, v)\in\varGamma\}.\]
It follows from Lemma \ref{abthm-l2} that
$\lambda_{*}\in  [0,\sigma_{1}(b\mu\1_{\Omega_{1}}))$.
Let $\{ (\lambda_k ,w_k ,v_k )\}_{k=1}^{\infty} \subset \varGamma$ 
be any sequence in $\mathbb{R}\times X$ such that
$\lambda_{k}\searrow\lambda_{*}$.

We will prove that $\{ \min_{\overline{\Omega}}w_k \}_{k=1}^{\infty}$ 
is unbounded by contradiction. Suppose that 
$\{ \min_{\overline{\Omega}}w_k \}_{k=1}^{\infty}$ is bounded. 
\begin{lem}\label{vasylem}
Let $(w,v)$ be any positive solution of \eqref{LP2}.
If $\lambda<b\mu$, then
\[
\dfrac{\lambda}{b}\le v(x)\le\dfrac{C_H}{b}\lambda
\quad\mbox{for any}\ x\in\overline{\Omega}_{1}\]
with some positive constants $C_H>1$ 
independent of $\lambda $.
\end{lem}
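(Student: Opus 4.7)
The plan is to derive the lower bound directly from Lemma \ref{abthm-l2} and to establish the upper bound by combining a Neumann Harnack inequality for $v$ with an integral identity obtained from the first equation of \eqref{LP2}.

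The lower bound $\lambda/b \le v(x)$ on $\overline{\Omega}_1$ is already (strictly) contained in Lemma \ref{abthm-l2}, so I focus on the upper bound $\max_{\overline{\Omega}_1} v \le (C_H/b)\lambda$ with $C_H>1$ independent of $\lambda$. I would rewrite the second equation of \eqref{LP2} as $\Delta v + \kappa(x)\,v = 0$ in $\Omega_1$ with Neumann boundary condition on $\partial\Omega_1$, where
\[
\kappa(x) := \dfrac{w(x)\bigl(\lambda - bv(x)\bigr) + \mu - v(x)}{1 + w(x)}.
\]
By the elementary inequality \eqref{qrs} together with the bounds $0<\lambda<b\mu$ and $0<v<\mu$ provided by Lemma \ref{abthm-l2}, one has $|\kappa(x)| \le \max\{b\mu,\mu\}$ uniformly in $\lambda$ and in the specific solution $(w,v)$. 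A Neumann Harnack inequality (in the spirit of \cite[Lemma~3.1]{LN2} together with \cite{Stampacchia1965}) then yields
\[
\max_{\overline{\Omega}_1} v \le C_1\,\min_{\overline{\Omega}_1} v
\]
with a constant $C_1 = C_1(b,\mu,\Omega_1)>1$ depending only on $b$, $\mu$, and $\Omega_1$.

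Next, I would exploit the first equation of \eqref{LP2}, which is equivalent to $\lambda = \sigma_1(b\1_{\Omega_1} v,\Omega)$. Let $\phi>0$ denote the associated Neumann eigenfunction on $\Omega$, so that
\[
-\Delta \phi + b\1_{\Omega_1}\,v\,\phi = \lambda \phi \quad \text{in }\Omega, \qquad \partial_n \phi = 0 \quad \text{on }\partial\Omega.
\]
Integrating over $\Omega$ and using the boundary condition gives $b\int_{\Omega_1} v\phi\,dx = \lambda \int_\Omega \phi\,dx$, and bounding $v\phi \ge (\min_{\overline{\Omega}_1} v)\,\phi$ on $\Omega_1$ yields
\[
\min_{\overline{\Omega}_1} v \le \dfrac{\lambda}{b}\cdot \dfrac{\int_\Omega \phi\,dx}{\int_{\Omega_1} \phi\,dx}.
\]
Since the potential $\lambda - b\1_{\Omega_1} v$ in the $\phi$-equation lies in $L^\infty(\Omega)$ with a bound depending only on $b$ and $\mu$, Harnack for $\phi$ controls $\max \phi / \min \phi$, hence $\int_\Omega \phi/\int_{\Omega_1}\phi \le C_2\,|\Omega|/|\Omega_1|$ uniformly in $\lambda$. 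Combining with the Harnack bound on $v$, one obtains $\max_{\overline{\Omega}_1} v \le (C_H/b)\lambda$ with $C_H := C_1 C_2\,|\Omega|/|\Omega_1|$, and $C_H>1$ follows from $|\Omega|>|\Omega_1|$ and $C_1,C_2\ge 1$.

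The main obstacle is ensuring that the Harnack constants $C_1,C_2$ can be chosen uniformly in $\lambda\in(0,b\mu)$ and in the particular positive solution. This reduces to the uniform $L^\infty$ control of the coefficients of the two linear equations above, which follows from $\lambda<b\mu$, $v\le\mu$, and \eqref{qrs}; the fact that the potential $b\1_{\Omega_1}v$ in the $\phi$-equation has a jump across $\partial\Omega_0$ is handled by the generalized Harnack inequality for divergence-form operators with merely bounded measurable coefficients, already invoked earlier in the paper.
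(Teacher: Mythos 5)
Your proof is correct and follows essentially the same route as the paper's: Harnack for $v$, Harnack for the prey density, and the integral identity from the first equation of \eqref{LP2} yielding $b\int_{\Omega_1}v\,\cdot = \lambda\int_\Omega\,\cdot$. The only cosmetic difference is that you introduce the eigenfunction $\phi$ of $\lambda=\sigma_1(b\1_{\Omega_1}v)$ as an auxiliary object, whereas the paper simply uses $w$ directly; but the first equation of \eqref{LP2} says precisely that $w$ is (a positive multiple of) that eigenfunction, so your identity $b\int_{\Omega_1}v\phi=\lambda\int_\Omega\phi$ is the paper's $b\int_{\Omega_1}vw=\lambda\int_\Omega w$ in disguise, and the two Harnack applications and the final combination are identical.
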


\begin{proof}
Integrating the first equation of \eqref{LP2} over $\Omega$, we obtain
\begin{equation}\label{intweq}
b\int_{\Omega_1} v w\,dx \;=\; \lambda\int_{\Omega} w\,dx.
\end{equation}
By virtue of Lemma~\ref{abthm-l2}, we know that the potential function 
\[
K^{(2)}(x):=\frac{w(\lambda - b v)+\mu - v}{1+w}
\]
is continuous and uniformly bounded for $x\in\Omega_1$ and $\lambda>0$.
Then we can use the usual Harnack inequality to find a positive constant
$C_H^{(2)}$, independent of $\lambda$, such that
\begin{equation}\label{Harnack}
\max_{x\in\overline{\Omega}_1} v(x)\;\le\; C_{H}^{(2)}\,
\min_{x\in\overline{\Omega}_1} v(x).
\end{equation}
Combining \eqref{intweq} with \eqref{Harnack}, we observe that
\begin{equation}\label{maxvL1w<lamL1e}
\frac{b}{C_H^{(2)}}\,\max_{x\in\overline{\Omega}_1} v(x)\int_{\Omega_1} w\,dx
\;\le\;
b\,\min_{x\in\overline{\Omega}_1} v(x)\int_{\Omega_1} w\,dx
\;\le\;
b\int_{\Omega_1} v w\,dx
\;=\;
\lambda\int_{\Omega} w\,dx.
\end{equation}
Here we apply the Harnack inequality once more to the first 
equation of \eqref{LP2}:
\[
-\Delta w = \bigl(\lambda - b\1_{\Omega_1}v\bigr)w 
\quad \text{in } \Omega, 
\qquad \partial_{n} w = 0 \quad \text{on } \partial\Omega.
\]
Then there exists a positive constant 
$\widetilde{C}^{(1)}_{H}$, independent of $\lambda$, such that
\[
\max_{x\in \overline{\Omega}} w(x)
\;\le\; \widetilde{C}^{(1)}_{H}
\min_{x\in \overline{\Omega}} w(x).
\]
Consequently, we deduce
\begin{equation}\label{eq:w-integral}
\lambda \int_{\Omega} w\,dx
\;\le\; \lambda \,\widetilde{C}^{(1)}_{H}
\min_{x\in \overline{\Omega}} w(x)\,|\Omega|
\;\le\; \lambda\,\widetilde{C}^{(1)}_{H}\,
\frac{|\Omega|}{|\Omega_{1}|}
\int_{\Omega_{1}} w\,dx.
\end{equation}
Therefore, combining \eqref{maxvL1w<lamL1e} with \eqref{eq:w-integral},
we obtain
\begin{equation}\label{eq:v-upper}
\max_{x\in\overline{\Omega}_1} v(x)
\;\le\; \frac{\widetilde{C}_{H}^{(1)}C_{H}^{(2)}}{b}
\dfrac{|\Omega|}{|\Omega_{1}|}\,\lambda.
\end{equation}
Together with (i) of Lemma~\ref{abthm-l2},
we then obtain the desired estimates with $C_{H}:=\widetilde{C}_{H}^{(1)}C_{H}^{(2)}|\Omega |/|\Omega_1|$.
\end{proof}

\begin{lem}\label{blowupratelem}
Let $(w,v)$ be any positive solution of \eqref{LP2}.
If $\lambda>0$ is sufficiently small, then
\[
\dfrac{C_1}{\lambda}\le w(x)\le\dfrac{C_2}{\lambda}
\quad\mbox{for any}\ x\in\overline{\Omega}\]
with some positive constants $C_1$ and $C_2$ 
independent of $\lambda $.
\end{lem}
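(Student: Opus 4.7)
\smallskip\noindent\textbf{Proof plan.} The plan is to introduce the rescalings $W:=\lambda w$ and $V:=v/\lambda$, noting from Lemma~\ref{vasylem} that $1/b\le V\le C_H/b$ uniformly as $\lambda\to 0^+$, and to derive both bounds from an integral identity for the second equation of \eqref{LP2} together with the generalized Harnack inequality of Lou--Ni \cite[Lemma~3.1]{LN2} applied to the first equation.

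For the lower bound I would integrate the second equation of \eqref{LP2} over $\Omega_1$ and rearrange to obtain
\[
\int_{\Omega_1}\frac{v\mu}{1+w}\,dx=\int_{\Omega_1}\frac{v\bigl\{w(bv-\lambda)+v\bigr\}}{1+w}\,dx.
\]
The estimate $v=O(\lambda)$ from Lemma~\ref{vasylem} together with $w/(1+w)\le 1$ bounds the right-hand side by $C\lambda^2$, while $v\ge\lambda/b$ and $1/(1+w)\ge 1/(1+\max_\Omega w)$ bound the left-hand side below by $c\lambda/(1+\max_\Omega w)$. Comparing these yields $\max_\Omega w\ge c'/\lambda$ for all sufficiently small $\lambda$, and the Harnack inequality applied to the first equation of \eqref{LP2} (whose potential $\lambda-b\1_{\Omega_1}v$ is uniformly bounded) then transfers this to $\min_\Omega w\ge C_1/\lambda$.

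The upper bound I would establish by contradiction. Assume $M_k:=\lambda_k\|w_k\|_{L^\infty(\Omega)}\to\infty$ along some sequence $\lambda_k\to 0^+$. Then $\widetilde{W}_k:=\lambda_k w_k/M_k$ satisfies $-\Delta\widetilde{W}_k=\lambda_k(1-b\1_{\Omega_1}V_k)\widetilde{W}_k$ with uniformly bounded right-hand side, while $V_k:=v_k/\lambda_k$ satisfies $-\Delta V_k=O(\lambda_k)$ by a direct estimate of the second equation using $\lambda_k w_k\to\infty$. Elliptic regularity and compactness then yield, along a subsequence, $\widetilde{W}_k\to c$ in $C^1(\overline{\Omega})$ for some positive constant $c$ (positivity coming from Harnack) and $V_k\to v_*$ in $C^1(\overline{\Omega}_1)$ for some constant $v_*\in[1/b,C_H/b]$. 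Using the integrated second equation of \eqref{LP2}, after dividing by $\lambda_k^2$ and passing to the limit by dominated convergence, one obtains $v_*(bv_*-1)|\Omega_1|=0$, hence $v_*=1/b$. On the other hand, integrating the first equation of \eqref{LP2} gives $\lambda_k\int_\Omega w_k=b\int_{\Omega_1}v_k w_k$; dividing by $M_k$ and passing to the limit yields $c|\Omega|=bv_*\,c|\Omega_1|=c|\Omega_1|$, forcing $|\Omega_0|=0$ and contradicting the hypothesis on $\Omega_0$. Hence $w\le C_2/\lambda$.

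The main obstacle is the upper bound: one must simultaneously control the double limit $\lambda_k\to 0$ and $w_k\to\infty$ in the semi-degenerate structure of the second equation of \eqref{LP2}, and extract just enough compactness from the normalized pair $(\widetilde{W}_k,V_k)$ so that passing to the limit in both integral identities uncovers the volume mismatch $|\Omega|>|\Omega_1|$ produced by the protection zone $\Omega_0$.
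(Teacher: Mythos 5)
Your proof is correct, and for both directions it takes a genuinely different (and in places cleaner) route from the paper. For the \emph{lower} bound, you rearrange the integrated second equation so that the $\mu$-term stands alone, bound the right-hand side by $C\lambda^2$ via $v=O(\lambda)$ and $w/(1+w)\le 1$, and read off $\max_{\overline{\Omega}}w\gtrsim 1/\lambda$ in one step; the paper instead first establishes $\|w(\lambda)\|_{L^\infty}\to\infty$, then separately rules out $\liminf_{\lambda\to 0^+}\lambda\|w\|_{L^\infty}=0$ by showing the middle term $\mu\widetilde{v}/((1+w)\lambda)$ of $\widetilde{K}\widetilde{v}$ would dominate and make $\int_{\Omega_1}\widetilde{K}\widetilde{v}\,dx$ positive. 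Your version avoids the preliminary blow-up step and the normalization $\widehat{w}=w/\|w\|_{L^\infty}$. For the \emph{upper} bound, the paper works pointwise: it uses the identity $\int_\Omega \Delta\widehat{w}_j/\widehat{w}_j\,dx=-\|\nabla\widehat{w}_j/\widehat{w}_j\|_{L^2}^2$ to obtain $\int_{\Omega_1}(1-b\widetilde{v}_j)\,dx\le -|\Omega_0|$, then bounds the first term of $\widetilde{K}\widetilde{v}_j$ by $\tfrac{1}{2b}(1-b\widetilde{v}_j)$ and shows the other two are small, yielding $\int_{\Omega_1}\widetilde{K}\widetilde{v}_j\,dx\le -|\Omega_0|/(2b)+\varepsilon<0$. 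Your argument is instead a blow-up compactness argument: normalize $\widetilde{W}_k=w_k/\|w_k\|_{L^\infty}\to 1$ and $V_k=v_k/\lambda_k\to v_*$ constant, then extract the contradiction $|\Omega|=bv_*|\Omega_1|=|\Omega_1|$ from the two integrated equations. Both make the role of $|\Omega_0|>0$ explicit, the paper through the quantitative inequality $\le-|\Omega_0|$, you through the volume mismatch $|\Omega|>|\Omega_1|$. Each buys something: the paper's approach is elementary and quantitative (no compactness), while yours is conceptually transparent and isolates the structure of the limiting obstruction. One small point to make fully explicit when writing this up: in showing $-\Delta V_k=O(\lambda_k)$ you use that $(1+w_k)^{-1}=O(\lambda_k)$ uniformly on $\overline{\Omega}_1$, which requires transferring $M_k=\lambda_k\|w_k\|_{L^\infty}\to\infty$ into $\min_{\overline{\Omega}}w_k\gtrsim\|w_k\|_{L^\infty}$ via the generalized Harnack inequality on the first equation (potential $\lambda-b\1_{\Omega_1}v$ is discontinuous but $L^\infty$-bounded); this is the same Lou--Ni/Stampacchia step the paper invokes.
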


\begin{proof}
We have shown that
\eqref{LP2} admits at least one positive solution
for each $\lambda\in (0, \sigma_1 (b\mu\1_{\Omega_1} ,\Omega ))$.
Then we denote any positive solution of \eqref{LP2} for each
$\lambda\in (0, \sigma_1 (b\mu\1_{\Omega_1} ,\Omega ))$ by 
$(w(\lambda), v(\lambda)):=(w(\,\cdot\,,\lambda), v(\,\cdot\,,\lambda)).$ 
In view of Lemma \ref{vasylem},
we note that
\[\widetilde{v}(\lambda ):=\dfrac{v(\lambda )}{\lambda}\] 
has
the uniform upper and lower bounds as follows:
\begin{equation}\label{vtilbdd}
\frac{1}{b}\le \widetilde{v}(\lambda )\le \frac{C_H}{b}\quad \text{in }\Omega_1
\end{equation}
for any $\lambda\in (0,b\mu)$.
Dividing the second equation of \eqref{LP2} by $\lambda^2$,
we can see
\begin{equation}\label{vtil}
-\dfrac{1}{\lambda}\,\Delta \widetilde{v}(\lambda)
= \widetilde{K}(x,\lambda)\,\widetilde{v}(\lambda)
\quad \mbox{in } \Omega_1,\qquad
\partial_{n}\widetilde{v}(\lambda)=0\quad \mbox{on } \partial\Omega_1,
\end{equation}
with
\[
\widetilde{K}(x,\lambda):=
\dfrac{w(\lambda)}{1+w(\lambda )}\,(1-b\widetilde{v}(\lambda))
+\dfrac{\mu}{(1+w(\lambda))\lambda}
-\dfrac{\widetilde{v}(\lambda)}{1+w(\lambda )}.
\]

As the first step of the proof,
we show that $\|w(\lambda )\|_{L^{\infty}(\Omega )}$ blows up as 
$\lambda \to 0^{+}$.
Suppose, for contradiction, that 
$\{\|w_{j}\|_{L^{\infty}(\Omega )}\}$ remains bounded for some
sequence $w_{j}:=w(\lambda_j)$ with $\lambda_j\to 0^{+}$.
Then \eqref{vtilbdd} implies that
the first and third terms of $\widetilde{K}(x,\lambda_j)\widetilde{v}_j$, 
that is,
\[
\frac{w_j}{1+w_j}\,(1-b\widetilde{v}_j)\widetilde{v}_j
\quad \text{and} \quad 
-\frac{\widetilde{v}_j^{\,2}}{1+w_j},
\]
are uniformly bounded in $\overline{\Omega}_1$
with respect to $j$.
On the other hand, 
the second term
\[
\dfrac{\mu\widetilde{v}_j}
{(1+w_j)\lambda_j}
\]
blows up uniformly in $\overline{\Omega}$ as $j\to\infty$.
Consequently, $\widetilde{K}(x,\lambda_j)\widetilde{v}_j$ also blows up uniformly in 
$\overline{\Omega}_1$ as $j\to\infty$.
This, however, is impossible, since integrating \eqref{vtil} yields
\begin{equation}\label{KI}
\int_{\Omega_1}\widetilde{K}(x,\lambda)\,\widetilde{v}(\lambda )\,dx=0
\end{equation}
for any $\lambda$.
Then we obtain
\[
\lim_{\lambda\to 0^+}\|w(\lambda )\|_{L^{\infty}(\Omega )}=\infty
\]
by the contradiction argument.

We next show in more detail that $w(\lambda)$ blows up uniformly in 
$\overline{\Omega}$ at the order of $1/\lambda$. 
To this end, 
it is a standard device to consider the $L^{\infty}$-normalization 
of $w$ defined as follows:
\[
\widehat{w}(\lambda ):=\dfrac{w(\lambda )}{\|w(\lambda )\|_{L^{\infty}(\Omega )}}.
\]
Then dividing the first equation of \eqref{LP2} by
$\|w(\lambda )\|_{L^{\infty }(\Omega )}$,
one can see
\begin{equation}\label{wtile}
-\Delta\widehat{w}(\lambda )=\widehat{w}(\lambda )\,\lambda\,
\bigl(1-b\1_{\Omega_1}\widetilde{v}(\lambda )\bigr)\quad\mbox{in }\Omega,\qquad
\partial_{n}\widehat{w}=0\quad\mbox{on }\partial\Omega.
\end{equation}
By virtue of
$\|\widehat{w}(\lambda )\|_{L^{\infty}(\Omega )}=1$ and \eqref{vtilbdd},
the usual elliptic regularity theorem implies that
$\{\widehat{w}(\lambda )\}$
is uniformly bounded in $W^{2,p}(\Omega )$ for any $p>1$.
By the Sobolev embedding theorem and the strong maximum principle,
one can easily find
a sequence $\{\lambda_j\}$ 
such that $\lambda_{j}\to 0^{+}$
as $j\to\infty$ and
\begin{equation}\label{wtilasy}
\lim_{j\to\infty}\widehat{w}_{j}=1\quad
\mbox{weakly in } W^{2,p}(\Omega )\ \ \mbox{and}\ \ 
\mbox{strongly in } C^{1}(\overline{\Omega }),
\end{equation}
where, for simplicity, we write $\widehat{w}_j:=\widehat{w}(\lambda_{j})$.

Suppose, by contradiction, that either
\begin{equation}\label{ass1}
\limsup_{\lambda\to 0^+}\bigl(\lambda\|w(\lambda)\|_{L^{\infty}(\Omega )}\bigr)=\infty
\end{equation}
or
\begin{equation}\label{ass2}
\liminf_{\lambda\to 0^+}\bigl(\lambda\|w(\lambda)
\|_{L^{\infty}(\Omega )}\bigr)=0.
\end{equation}
We first assume the case \eqref{ass1}.
Then there exists a subsequence of $\{\lambda_{j}\}$
with $\lambda_{j}\to 0^{+}$ as $j\to\infty$ such that
\begin{equation}\label{contass}
\lim_{j\to\infty}\bigl(\lambda_{j}\|w_{j}\|_{L^{\infty}(\Omega )}\bigr)=\infty.
\end{equation}
Taking a subsequence if necessary, we may assume \eqref{wtilasy}.
Dividing the elliptic equation of \eqref{wtile} by 
$\widehat{w}_{j}\lambda_{j}$
and integrating
the resulting expression over $\Omega$, we obtain
\[-\dfrac{1}{\lambda_{j}}
\int_{\Omega}\dfrac{\Delta\widehat{w}_j}{\widehat{w}_j}\,dx=
\int_{\Omega}(1-b\1_{\Omega_1}\tilde{v}_{j})\,dx.\]
For the left-hand side, by integration by parts together with the boundary condition, we obtain
\[
\int_{\Omega}\frac{\Delta \widehat{w}_j}{\widehat{w}_j}\,dx
= \int_{\partial\Omega}\frac{1}{\widehat{w}_j}\,\partial_{n}\widehat{w}_j\,dS
 - \int_{\Omega}\nabla\!\left(\frac{1}{\widehat{w}_j}\right)\cdot\nabla \widehat{w}_j\,dx
= \left\|\frac{\nabla \widehat{w}_j}{\widehat{w}_j}\right\|_{L^{2}(\Omega)}^{2}.
\]
On the other hand, for the right-hand side, by the definition of $b\1_{\Omega_{1}}$ we compute
\[
\int_{\Omega}(1-b\1_{\Omega_1}\widetilde{v}_j)\,dx
= |\Omega_{0}| + \int_{\Omega_1}\bigl(1-b\,\widetilde{v}_j\bigr)\,dx.
\]
Therefore, we obtain
\begin{equation}\label{1-bvj}
\int_{\Omega_{1}}(1-b\widetilde{v}_{j})\,dx=
-|\Omega_{0}|-\dfrac{1}{\lambda_j}
\left\|\frac{\nabla \widehat{w}_j}{\widehat{w}_j}\right\|_{L^{2}(\Omega)}^{2}\le -|\Omega_0|.
\end{equation}
Here we note from \eqref{KI} that
\begin{equation}\label{Int0}
\int_{\Omega_1}K(x,\lambda_j)\,\widetilde{v}_j\,dx=0
\end{equation}
holds for every $j\in\mathbb{N}$. Here we investigate the behavior of 
each term of
\begin{equation}\label{K}
\begin{split}
\widetilde{K}(x, \lambda_{j})\widetilde{v}_j=\ &
\dfrac{\|w_{j}\|_{L^{\infty}(\Omega )}\widehat{w}_j}{
1+\|w_{j}\|_{L^{\infty}(\Omega )}\widehat{w}_j}
(1-b\widetilde{v}_j)\widetilde{v}_j\\
&+
\dfrac{\mu\widetilde{v}_j}{
(1+\|w_{j}\|_{L^{\infty}(\Omega )}\widehat{w}_j)\,\lambda_j}-
\dfrac{\widetilde{v}_j^{\,2}}
{1+\|w_{j}\|_{L^{\infty}(\Omega )}\widehat{w}_j}.
\end{split}
\end{equation}
Owing to the facts that $\|w_j\|_{L^{\infty}(\Omega)}\to\infty$
and
$\widehat{w}_{j}\to 1$ uniformly in $\overline{\Omega}$ by \eqref{wtilasy},
our assumption \eqref{contass} together with \eqref{vtilbdd} 
implies that
\[
\lim_{j\to\infty}
\dfrac{\mu\,\widetilde{v}_j}{
\bigl(1+\|w_{j}\|_{L^{\infty}(\Omega )}\widehat{w}_j\bigr)\,\lambda_j}=0
\quad
\mbox{and}\quad
\lim_{j\to\infty}\biggl(
-
\dfrac{\widetilde{v}_j^{\,2}}
{1+\|w_{j}\|_{L^{\infty}(\Omega )}\widehat{w}_j}\biggr)=0
\]
uniformly
in $\overline{\Omega}_1$.
Concerning the first term,
we remark that \eqref{vtilbdd} leads to
\[
1-b\widetilde{v}_{j}\le 0
\quad\mbox{in }\overline{\Omega}_1 \quad\mbox{for any } 
j\in\mathbb{N}.
\]
In addition, since $\|w_j\|_{L^{\infty}(\Omega)}\to\infty$
and
$\widehat{w}_{j}\to 1$ uniformly in $\overline{\Omega}$,
\eqref{vtilbdd} also yields
\[
\dfrac{\|w_{j}\|_{L^{\infty}(\Omega )}\widehat{w}_j}{
1+\|w_{j}\|_{L^{\infty}(\Omega )}\widehat{w}_j}
\widetilde{v}_j \;\ge\; \dfrac{1}{2b}
\quad\mbox{in }\overline{\Omega}_1 \quad\mbox{for sufficiently large } 
j\in\mathbb{N}.
\]
Therefore, it follows that
\[
\dfrac{\|w_{j}\|_{L^{\infty}(\Omega )}\widehat{w}_j}{
1+\|w_{j}\|_{L^{\infty}(\Omega )}\widehat{w}_j}
(1-b\widetilde{v}_j)\widetilde{v}_j \;\le\;
\dfrac{1}{2b}(1-b\widetilde{v}_j)
\quad\mbox{in } \overline{\Omega}_{1}\quad
\mbox{for sufficiently large } 
j\in\mathbb{N}.
\]
Then, for any small $\varepsilon>0$, there exists a large integer
$j_{0}$ such that if $j\ge j_{0}$, then
\[
\int_{\Omega_1}\widetilde{K}(x,\lambda_{j})\widetilde{v}_j\,dx
\;\le\;\dfrac{1}{2b}\int_{\Omega_1}(1-b\widetilde{v}_j)\,dx+\varepsilon.
\]
Here we recall \eqref{1-bvj} to obtain
\[
\int_{\Omega_1}\widetilde{K}(x,\lambda_{j})\widetilde{v}_j\,dx
\;\le\; -\dfrac{|\Omega_0|}{2b}+\varepsilon<0.
\]
This clearly contradicts \eqref{Int0}.

We next assume the case \eqref{ass2}.
Then there exists a subsequence of $\{\lambda_{j}\}$
with $\lambda_{j}\to 0$ as $j\to\infty$ such that
\begin{equation}\label{contass2}
\lim_{j\to\infty}\bigl(\lambda_{j}\|w_{j}\|_{L^{\infty}(\Omega )}\bigr)=0.
\end{equation}
We may assume \eqref{wtilasy} by passing to a subsequence if necessary.
Hence \eqref{Int0} should hold.
In view of \eqref{K}, one can verify that
the above assumption \eqref{contass2},
together with \eqref{vtilbdd},
ensures that
the first and third terms of $\widetilde{K}(x,\lambda_{j})\widetilde{v}_{j}$
remain uniformly bounded in $\overline{\Omega}_1$ with respect to $j$,
whereas the second term diverges as follows:
\[
\lim_{j\to\infty}
\dfrac{\mu\,\widetilde{v}_j}{
\bigl(1+\|w_{j}\|_{L^{\infty}(\Omega )}\widehat{w}_j\bigr)\,\lambda_j}
=\infty
\quad\mbox{uniformly in } \overline{\Omega}_1.
\]
It then follows that
\[
\lim_{j\to\infty}
\int_{\Omega_1}
K(x,\lambda_j)\,\widetilde{v}_j\,dx=\infty.
\]
This contradicts \eqref{KI}.
Consequently, the above contradiction argument shows that
both \eqref{ass1} and \eqref{ass2} cannot occur.
Therefore, by virtue of the fact that
$\|w(\lambda)\|_{L^{\infty}(\Omega_1)}\to\infty$
as $\lambda\to 0^+$ with \eqref{wtilasy},
we deduce that
$w(\lambda)$ blows up uniformly in $\overline{\Omega}_1$
of order $1/\lambda$ as $\lambda\to 0^+$.
The proof of Lemma \ref{blowupratelem} is complete.
\end{proof}
Having obtained Lemmas \ref{vasylem} and \ref{blowupratelem}, 
which clarify the preliminary properties of $(w,v)$ for small $\lambda>0$, 
we are now in a position to introduce the scaling
\begin{equation}\label{scale}
\widetilde{w}:=\lambda w, 
\qquad 
\widetilde{v}:=\frac{v}{\lambda},
\end{equation}
so as to derive the more detailed asymptotic behavior of positive solutions to \eqref{LP2}.
It is easy to check that $(\widetilde{w}, \widetilde{v})$ satisfies

\begin{equation}
\begin{cases}
\Delta \widetilde{w}+\lambda\,\widetilde{w}\,
(\,1 -b\1_{\Omega_1}\,\widetilde{v}\,)=0\ \ &\mbox{in}\ \Omega,\\[2pt]
\Delta \widetilde{v}+\dfrac{\lambda\,\widetilde{v}}{\lambda+\widetilde{w}}
\left\{ \,\widetilde{w}\,(\,1 -b\widetilde{v}\,)+\mu -\lambda\widetilde{v}
\,\right\} =0
\ \ &\mbox{in}\ \Omega_1 ,\\[2pt]
\partial_n \widetilde{w}=0\ \
\ \ &\mbox{on}\ \partial \Omega,\\
\partial_n \widetilde{v}=0\ \
\ \ &\mbox{on}\ \partial \Omega_1.
\end{cases}
\nonumber
\end{equation}
We aim to parametrize the branch of positive solutions to the above system
for small $\lambda>0$ by means of the Lyapunov-Schmidt reduction 
and to describe their asymptotic behavior as $\lambda\to 0^{+}$.
For this purpose we decompose the unknowns $(\widetilde{w},\widetilde{v})$ as
\begin{equation}\label{LP22}
\begin{split}
&\widetilde{w}=s+\lambda\phi,\qquad 
s=\frac{1}{|\Omega|}\int_{\Omega}\widetilde{w}\,dx,\\
&\widetilde{v}=t+\lambda\psi,\qquad 
t=\frac{1}{|\Omega_{1}|}\int_{\Omega_{1}}\widetilde{v}\,dx,
\end{split}
\end{equation}
where 
\[
\phi\in W^{2,p}_{n,\#}(\Omega),\qquad 
\psi\in W^{2,p}_{n,\#}(\Omega_{1}).
\]
Here $W^{2,p}_{n,\#}(O)$ denotes the subspace of 
$W^{2,p}_{n}(O)$ consisting of functions with vanishing mean value.
Thus the unknowns are 
\[
(s,t,\phi,\psi)\in \mathbb{R}^{2}
\times W^{2,p}_{n,\#}(\Omega)\times W^{2,p}_{n,\#}(\Omega_{1}),
\]
and $(\phi,\psi)$ satisfy the system
\begin{equation}\label{LS-comp}
\begin{cases}
\Delta \phi+(s+\lambda\phi)\bigl(1-b\1_{\Omega_1}(t+\lambda\psi)\bigr)=0
& \text{in }\Omega,\\[2pt]
\Delta \psi+\dfrac{t+\lambda\psi}{\lambda+s+\lambda\phi}
\Bigl\{(s+\lambda\phi)(1-b(t+\lambda\psi))+\mu-\lambda(t+\lambda\psi)\Bigr\}=0
& \text{in }\Omega_{1},\\[2pt]
\partial_{n}\phi=0 & \text{on }\partial\Omega,\\[2pt]
\partial_{n}\psi=0 & \text{on }\partial\Omega_{1}.
\end{cases}
\end{equation}

Introducing the projections 
$Q:L^{p}(\Omega)\to L^{p}_{\#}(\Omega)$ and 
$Q_{1}:L^{p}(\Omega_{1})\to L^{p}_{\#}(\Omega_{1})$, 
we can rewrite \eqref{LS-comp} as a coupled system for the 
$\mathbb{R}$-components and the mean-zero components. 
By applying the inverse Neumann Laplacians 
$(-\Delta)^{-1}_{\Omega,\#}$ and $(-\Delta)^{-1}_{\Omega_{1},\#}$, 
this system is equivalently formulated as the nonlinear operator equation
\[
F(\lambda,s,t,\phi,\psi)=0,
\]
with components
\[
\begin{aligned}
&F_{1}(\lambda,s,t,\phi,\psi)
=\int_{\Omega}(s+\lambda\phi)\bigl(1-b\1_{\Omega_1}(t+\lambda\psi)\bigr)\,dx,\\[4pt]
&F_{2}(\lambda,s,t,\phi,\psi)
=\int_{\Omega_{1}}
\frac{t+\lambda\psi}{\lambda+s+\lambda\phi}
\Bigl\{(s+\lambda\phi)(1-b(t+\lambda\psi))+\mu-\lambda(t+\lambda\psi)\Bigr\}\,dx,\\[4pt]
&F_{3}(\lambda,s,t,\phi,\psi)
=\phi-(-\Delta)^{-1}_{\Omega,\#}Q\bigl[(s+\lambda\phi)(1-b\1_{\Omega_1}(t+\lambda\psi))\bigr],\\[4pt]
&F_{4}(\lambda,s,t,\phi,\psi)
=\\
&\psi-(-\Delta)^{-1}_{\Omega_{1},\#}Q_{1}\!\left[
\frac{t+\lambda\psi}{\lambda+s+\lambda\phi}
\Bigl\{(s+\lambda\phi)(1-b(t+\lambda\psi))+\mu-\lambda(t+\lambda\psi)\Bigr\}
\right].
\end{aligned}
\]
Setting $\lambda=0$, we obtain the system
\[
F(0,s,t,\phi,\psi)=
\begin{pmatrix}
s\displaystyle\int_{\Omega}(1-b\1_{\Omega_1}t)\,dx \\[10pt]
\dfrac{t}{s}\displaystyle\int_{\Omega_{1}}(s(1-bt)+\mu)\,dx \\[10pt]
\phi-(-\Delta)^{-1}_{\Omega,\#}Q[\,s(1-b\1_{\Omega_1}t)\,]\\[6pt]
\psi-(-\Delta)^{-1}_{\Omega_{1},\#}Q_{1}\!\left[\dfrac{t}{s}(s(1-bt)+\mu)\right]
\end{pmatrix}.
\]
In order to find a solution $(s_{0}, t_{0}, \phi_0, \psi_0)$ with
$s_{0}$,
$t_{0}>0$ to the limit equation $F(0,s,t,\phi,\psi)=0$.
we remark that the first two components of the equation 
are reduced to the algebraic equations
\[
|\Omega|-b\,t|\Omega_1|=0,\qquad
s(1-bt)+\mu=0.\]
Then $(s_{0},t_{0})$ is uniquely determined by
\[
t_{0}=\frac{|\Omega|}{b|\Omega_{1}|},\qquad
s_{0}=\frac{\mu|\Omega_{1}|}{|\Omega_{0}|},
\]
and the associated pair $(\phi_{0},\psi_{0})$ is given by
\[
\phi_{0}
=\mu\,(-\Delta)^{-1}_{\Omega,\#}\!\left[
\frac{|\Omega_{1}|}{|\Omega_{0}|}\,\1_{\Omega_{0}}-\1_{\Omega_{1}}
\right],
\qquad
\psi_{0}=0.
\]
Thus $(s_{0},t_{0},\phi_{0},\psi_{0})$ is the explicitly expressed solution of $F(0,s,t,\phi,\psi)=0$, which serves as the base point for the reduction.

Next we compute the Fr\'echet derivative of $F$ with respect to $(s,t,\phi,\psi)$ at the base point. 
For the computation of the Fr\'echet derivative, it is convenient to introduce 
the simplified notation
\[
F_{j}^{0}(s,t):=F_{j}(0,s,t,\phi,\psi),\qquad j=1, 2, 3, 4,
\]
which denote the first two components of $F$ at $\lambda=0$ 
(they do not depend on $(\phi,\psi)$ in this case).  
Explicitly, they take the form
\[
F_{1}^{0}(s,t)=s\,(|\Omega|-b\,t|\Omega_{1}|),\qquad
F_{2}^{0}(s,t)=|\Omega_{1}|\,t\Bigl(1-bt+\frac{\mu}{s}\Bigr).
\]
Differentiating and evaluating at $(s_{0},t_{0})$ yields
\[
\partial_{s}F_{1}^{0}(s_{0},t_{0})=0,\qquad
\partial_{t}F_{1}^{0}(s_{0},t_{0})=-s_{0}b|\Omega_{1}|,
\]
\[
\partial_{s}F_{2}^{0}(s_{0},t_{0})=-\frac{t_{0}}{s_{0}}|\Omega_{0}|,
\qquad
\partial_{t}F_{2}^{0}(s_{0},t_{0})=-b\,t_{0}|\Omega_{1}|.
\]
Thus the Jacobian matrix of the mapping 
\[
(s,t)\longmapsto \bigl(F_{1}^{0}(s,t),\,F_{2}^{0}(s,t)\bigr)
\]
evaluated at the base point $(s_{0},t_{0})$ is given by
\[
J=
\begin{pmatrix}
\partial_{s}F_{1}^{0}(s_{0},t_{0}) & \partial_{t}F_{1}^{0}(s_{0},t_{0}) \\[6pt]
\partial_{s}F_{2}^{0}(s_{0},t_{0}) & \partial_{t}F_{2}^{0}(s_{0},t_{0})
\end{pmatrix}
=
\begin{pmatrix}
0 & -\,s_{0}b\,|\Omega_{1}| \\[6pt]
-\dfrac{t_{0}}{s_{0}}\,|\Omega_{0}| & -\,b\,t_{0}\,|\Omega_{1}|
\end{pmatrix}.
\]
In particular, its determinant is
\[
\det J
=\bigl(\partial_{s}F_{1}^{0}\,\partial_{t}F_{2}^{0}
- \partial_{t}F_{1}^{0}\,\partial_{s}F_{2}^{0}\bigr)(s_{0},t_{0})
=-\,b\,t_{0}\,|\Omega_{0}|\,|\Omega_{1}|<0.
\]
Hence $J$ is invertible.

For the infinite-dimensional components, the derivative is given by
\begin{align*}
&  \partial_{s}F_{3}^{0}(s_{0}, t_{0})=-(-\Delta)^{-1}_{\Omega,\#}Q[\,1-b\1_{\Omega_1}t_{0}\,],\\
&\partial_{t}F_{3}^{0}(s_{0}, t_{0})=s_{0}(-\Delta)^{-1}_{\Omega,\#}Q[\,b\1_{\Omega_1}\,],\\
&\partial_{\phi}F_{3}^{0}(s_{0}, t_{0})=\mathrm{Id}_{W^{2,p}_{n,\#}(\Omega)},
\end{align*}
and
\[
\partial_{s}F_{4}^{0}(s_{0}, t_{0})=0,\qquad
\partial_{t}F_{4}^{0}(s_{0}, t_{0})=0,\qquad
\partial_{\psi}F_{4}^{0}(s_{0}, t_{0})=\mathrm{Id}_{W^{2,p}_{n,\#}(\Omega_{1})}.
\]
Hence the Fr\'echet derivative at the base point can be expressed in block form as
\[
D_{(s,t,\phi,\psi)}F(0,s_{0},t_{0},\phi_{0},\psi_{0})
=
\begin{pmatrix}
J & 0 & 0 \\[6pt]
\mathcal{T}_{1} & \mathrm{Id}_{W^{2,p}_{n,\#}(\Omega)} & 0 \\[6pt]
\mathcal{T}_{2} & 0 & \mathrm{Id}_{W^{2,p}_{n,\#}(\Omega_{1})}
\end{pmatrix},
\]
where
\[
\mathcal{T}_{1}=\bigl(-(-\Delta)^{-1}_{\Omega,\#}Q[\,1-b\1_{\Omega_1}t_{0}\,],\;
s_{0}(-\Delta)^{-1}_{\Omega,\#}Q[\,b\1_{\Omega_1}\,]\bigr),
\qquad
\mathcal{T}_{2}=(0,0).
\]
Since $J$ is invertible and both diagonal operators 
$\mathrm{Id}_{W^{2,p}_{n,\#}(\Omega)}$ and 
$\mathrm{Id}_{W^{2,p}_{n,\#}(\Omega_{1})}$ are identities, 
the whole operator is invertible. 
This establishes the nondegeneracy of the base point and allows us to apply the implicit function theorem.
Hence the implicit function theorem gives a neighborhood 
$\widetilde{\mathcal{U}}$
of 
\[(0,s_{0}, t_{0}, \phi_{0}, 0)
\in\mathbb{R}^{3}\times W^{2,p}_{n,\#}(\Omega)
\times W^{2,p}_{n,\#}(\Omega_1)\]
and a small $\kappa>0$ such that
\begin{align*}
&\{\,(\lambda, s, t, \phi, \psi)\in\widetilde{\mathcal{U}}\,:\,
F(\lambda, s, t, \phi, \psi)=0\,\}\\
=&\{\, (\lambda, s(\lambda ), t(\lambda ), \phi(\lambda ), \psi(\lambda ))\,:\,\lambda\in (-\delta, \delta )\,\}\,(\,=:\widetilde{\varGamma}_{0}),
\end{align*}
with some continuously differentiable functions
\[
(s(\lambda ), t(\lambda ), \phi(\lambda ), \psi(\lambda ))
\in (0,\infty)\times (0,\infty)\times W^{2,p}_{n,\#}(\Omega )\times
W^{2,p}_{n,\#}(\Omega_1)
\]
for $\lambda\in (-\kappa, \kappa)$,
satisfying
$
(s(0), t(0), \phi(0), \psi(0))=(s_0, t_0, \phi_0, 0).
$
By \eqref{scale}, we construct a curve of positive solutions of \eqref{scale}. 
To this end, we introduce the neighborhood
\[
\mathcal{U}:=\biggl\{
(\lambda, w, v)\,:\,
(w, v)=\biggl(
\dfrac{s}{\lambda}+\phi,\, \lambda (t+\lambda\psi)
\biggr),\
(\lambda, s, t, \phi, \psi)\in\widetilde{\mathcal{U}},\ \lambda>0\biggr\}
\]
by means of the change of variables \eqref{scale} and \eqref{LP22}.
Then, applying the same change of variables to 
$\widetilde{\varGamma}_{0}|_{0<\lambda<\kappa}$, 
we see that all positive solutions of \eqref{LP2} contained in
$\mathcal{U}$ form an unbounded curve parameterized by $0<\lambda<\kappa$ as
\begin{align*}
\varGamma_{0+}=
\biggl\{\,(\lambda, w(\lambda ), v(\lambda))\in (0,\kappa)\times X\,:\, 
&w(\lambda )=\dfrac{s(\lambda )}{\lambda}+\phi (\lambda ),\\ 
&v(\lambda )=\lambda \bigl(t(\lambda )+\lambda \psi (\lambda )\bigr)\,\biggr\}.
\end{align*}
The proof of Theorem \ref{LP2thm} is complete.

\section*{Declarations}

\subsection*{Conflict of interest}
The authors declare that they have no competing interests.

\subsection*{Ethical approval}
This article does not contain any studies with human participants or animals performed by any of the authors.

\subsection*{Informed consent}
Not applicable.

\subsection*{Data availability}
Data sharing is not applicable to this article as no datasets were generated or analyzed during the current study.


\bibliographystyle{spmpsci}
\bibliography{KutoOeda}

@article{DS,
  author = {Du, Yihong and Shi, Junping},
  title = {A diffusive predator--prey model with a protection zone},
  journal = {J. Differential Equations},
  volume = {229},
  year = {2006},
  number = {1},
  pages = {63--91}
}

@article{HeZheng2015,
  author = {He, Xiao and Zheng, Sining},
  title = {Protection zone in a modified {Lotka--Volterra} model},
  journal = {Discrete Contin. Dyn. Syst. Ser. B},
  volume = {20},
  year = {2015},
  number = {7},
  pages = {2027--2038}
}

@article{HeZheng2017,
  author = {He, Xiao and Zheng, Sining},
  title = {Protection zone in a diffusive predator--prey model with {Beddington--DeAngelis} functional response},
  journal = {J. Math. Biol.},
  volume = {75},
  year = {2017},
  number = {1},
  pages = {239--257}
}

@article{Oeda2011,
  author = {Oeda, Kazuhiro},
  title = {Effect of cross-diffusion on the stationary problem of a prey--predator model with a protection zone},
  journal = {J. Differential Equations},
  volume = {250},
  year = {2011},
  number = {10},
  pages = {3988--4009}
}

@article{Oeda2012,
  author = {Oeda, Kazuhiro},
  title = {Coexistence states of a prey--predator model with cross-diffusion and a protection zone},
  journal = {Adv. Math. Sci. Appl.},
  volume = {22},
  year = {2012},
  number = {2},
  pages = {501--520}
}

@article{LiWu2017,
  author = {Li, Shanbing and Wu, Jianhua},
  title = {Effect of cross-diffusion in the diffusion prey--predator model with a protection zone},
  journal = {Discrete Contin. Dyn. Syst.},
  volume = {37},
  year = {2017},
  number = {3},
  pages = {1539--1558}
}

@article{LiWu2022,
  author = {Li, Shanbing and Wu, Jiahhua},
  title = {The effects of diffusion on the dynamics of a {Lotka--Volterra} predator--prey model with a protection zone},
  journal = {Calc. Var. Partial Differential Equations},
  volume = {61},
  year = {2022},
  number = {6},
  pages = {213}
}

@article{LiWuLiu2017,
  author  = {Li, Shanbing and Wu, Jianhua and Liu, Sanyang},
  title   = {Effect of cross-diffusion on the stationary problem of a {Leslie} prey--predator model with a protection zone},
  journal = {Calc. Var. Partial Differential Equations},
  volume  = {56},
  year    = {2017},
  number  = {3},
  pages   = {Paper No. 82, 35 pp.}
}

@article{LiYamada2018,
  author  = {Li, Shanbing and Yamada, Yoshio},
  title   = {Effect of cross-diffusion in the diffusion prey--predator model with a protection zone {II}},
  journal = {J. Math. Anal. Appl.},
  volume  = {461},
  year    = {2018},
  number  = {1},
  pages   = {971--992}
}

@article{MaTang2023,
  author = {Ma, Li and Tang, De},
  title = {A diffusion--advection predator--prey model with a protection zone},
  journal = {J. Differential Equations},
  volume = {375},
  year = {2023},
  pages = {304--347}
}

@article{TangChen2024,
  author = {Tang, De and Chen, Yuming},
  title = {A two-species diffusion--advection competition model with protection zones},
  journal = {J. Differential Equations},
  volume = {405},
  year = {2024},
  pages = {1--35}
}

@article{JinPengWang2023,
  author = {Jin, Yu and Peng, Rui and Wang, Jinfeng},
  title = {Enhancing population persistence by a protection zone in a reaction--diffusion model with strong {Allee} effect},
  journal = {Phys. D},
  volume = {454},
  year = {2023},
  pages = {133840}
}

@article{SunLei2023,
  author = {Sun, Ningkui and Lei, Chenggxia},
  title = {Long-time behavior of a reaction--diffusion model with strong {Allee} effect and free boundary: effect of protection zone},
  journal = {J. Dyn. Differ. Equ.},
  volume = {35},
  year = {2023},
  number = {1},
  pages = {737--770}
}

@article{XuZou2024,
  author = {Xu, Hui and Zou, Shaofen},
  title = {A diffusive {Monod--Haldane} predator--prey system with {Smith} growth and a protection zone},
  journal = {Nonlinear Anal. Real World Appl.},
  volume = {76},
  year = {2024},
  pages = {104018}
}

@article{WangFan2023,
  author = {Wang, Yu-Xia and Fan, Shouwen},
  title = {Effects of {B--D} functional response and protection zone on a predator--prey model},
  journal = {Taiwanese J. Math.},
  volume = {27},
  year = {2023},
  number = {5},
  pages = {989--1019}
}

@article{Oeda2017,
  author  = {Oeda, Kazuhiro},
  title   = {Steady-state solutions of a diffusive prey--predator model with finitely many protection zones},
  journal = {SUT J. Math.},
  volume  = {53},
  year    = {2017},
  number  = {1},
  pages   = {19--38}
}

@book{OL,
  author = {Okubo, Akira and Levin, Simon A.},
  title = {Diffusion and Ecological Problems: Modern Perspectives},
  edition = {2},
  publisher = {Springer},
  address = {New York},
  year = {2001}
}

@book{Ya,
  author = {Yagi, Atsushi},
  title = {Abstract Parabolic Evolution Equations and their Applications},
  series = {Springer Monographs in Mathematics},
  publisher = {Springer},
  year = {2010}
}

@article{CR1,
  author = {Crandall, Michael G. and Rabinowitz, Paul H.},
  title = {Bifurcation from simple eigenvalues},
  journal = {J. Funct. Anal.},
  volume = {8},
  year = {1971},
  pages = {321--340}
}

@article{R,
  author = {Rabinowitz, Paul H.},
  title = {Some global results for nonlinear eigenvalue problems},
  journal = {J. Funct. Anal.},
  volume = {7},
  year = {1971},
  pages = {487--513}
}

@article{SW,
  author = {Shi, Junping and Wang, Xiaofeng},
  title = {On global bifurcation for quasilinear elliptic systems on bounded domains},
  journal = {J. Differential Equations},
  volume = {246},
  year = {2009},
  pages = {2788--2812}
}

@article{LN1,
  author = {Lou, Yuan and Ni, Wei-Ming},
  title = {Diffusion, self-diffusion and cross-diffusion},
  journal = {J. Differential Equations},
  volume = {131},
  year = {1996},
  pages = {79--131}
}

@article{LN2,
  author = {Lou, Yuan and Ni, Wei-Ming},
  title = {Diffusion vs cross-diffusion: an elliptic approach},
  journal = {J. Differential Equations},
  volume = {154},
  year = {1999},
  pages = {157--190}
}

@article{Stampacchia1965,
  author = {Stampacchia, Guido},
  title = {Le probl\`eme de {Dirichlet} pour les \'equations elliptiques du second ordre \`a coefficients discontinus},
  journal = {Ann. Inst. Fourier (Grenoble)},
  volume = {15},
  year = {1965},
  pages = {189--257}
}

@incollection{Yam,
  author = {Yamada, Yoshio},
  title = {Positive solutions for {Lotka--Volterra} systems with cross-diffusion},
  booktitle = {Handbook of Differential Equations: Stationary Partial Differential Equations, Vol. 6},
  publisher = {Elsevier},
  year = {2008},
  pages = {411--501}
}

@article{OK2018,
  author = {Oeda, Kazuhiro and Kuto, Kousuke},
  title = {Positive steady states for a prey--predator model with population flux by attractive transition},
  journal = {Nonlinear Anal. Real World Appl.},
  volume = {44},
  year = {2018},
  pages = {589--615}
}

@article{Bo,
  author = {Bony, J. M.},
  title = {Principe du maximum dans les espaces de {Sobolev}},
  journal = {C. R. Acad. Sci. Paris Ser. A-B},
  volume = {265},
  year = {1967},
  pages = {333--336}
}

@book{L,
  author    = {L{\'o}pez-G{\'o}mez, Juli{\'a}n},
  title     = {Linear Second Order Elliptic Operators},
  publisher = {World Scientific},
  address   = {Singapore},
  year      = {2013}
}

@book{K,
  author    = {Kielh{\"o}fer, Hansj{\"o}rg},
  title     = {Bifurcation Theory: An Introduction with Applications to Partial Differential Equations},
  series    = {Applied Mathematical Sciences},
  volume    = {156},
  edition   = {2nd},
  publisher = {Springer},
  address   = {New York},
  year      = {2012}
}

@article{HY22,
  author = {Haihoff, M. and Yokota, T.},
  title = {Global existence and asymptotic behavior of solutions to a {L}otka--{V}olterra competition--diffusion system with density-dependent diffusion},
  journal = {Nonlinear Anal. Real World Appl.},
  volume = {67},
  year = {2022},
  eid = {103644}
}

@article{KK25,
  author = {Kato, R. and Kuto, K.},
  title = {Large time behavior of solutions to a cooperative model with population flux by attractive transition},
  journal = {J. Evol. Equ.},
  volume = {25},
  year = {2025},
  pages = {Article number: 71}
}

@article{KO2021,
  author  = {Kuto, Kousuke and Oeda, Kazuhiro},
  title   = {Bifurcation structure of coexistence states for a prey--predator model with large population flux by attractive transition},
  journal = {Proc. Roy. Soc. Edinburgh Sect. A},
  volume  = {152},
  year    = {2021},
  number  = {4},
  pages   = {965--988}
}

@article{Wang2026,
  author  = {Wang, Yu-Xia},
  title   = {Effects of diffusion on a predator--prey model: Homogeneity vs heterogeneity},
  journal = {J. Math. Anal. Appl.},
  volume  = {556},
  year    = {2026},
  pages   = {130233}
}

@incollection{LG95,
  author    = {L{\'o}pez-G{\'o}mez, Juli{\'a}n},
  title     = {Permanence under strong competition},
  booktitle = {World Scientific Series in Applied Analysis},
  volume    = {4},
  publisher = {World Scientific},
  year      = {1995},
  pages     = {473--488}
}

@book{LG15,
  author    = {L{\'o}pez-G{\'o}mez, Juli{\'a}n},
  title     = {Metasolutions of Parabolic Equations in Population Dynamics},
  publisher = {CRC Press (Taylor and Francis Group)},
  address   = {Boca Raton},
  year      = {2015}
}

@article{LG16,
  author  = {L{\'o}pez-G{\'o}mez, Juli{\'a}n},
  title   = {Global bifurcation for {Fredholm} operators},
  journal = {Rend. Istit. Mat. Univ. Trieste},
  volume  = {48},
  year    = {2016},
  pages   = {539--564}
}

@article{LGMC,
  author  = {L{\'o}pez-G{\'o}mez, Juli{\'a}n and Mora-Corral, Manuel},
  title   = {Counting zeroes of $C^1$ {Fredholm} maps of index zero},
  journal = {Bull. London Math. Soc.},
  volume  = {37},
  year    = {2005},
  pages   = {778--792}
}

@article{LGSab,
  author  = {L{\'o}pez-G{\'o}mez, Juli{\'a}n and Sabina de Lis, Juan C.},
  title   = {Coexistence states and global attractivity for some convective diffusive competing species models},
  journal = {Trans. Amer. Math. Soc.},
  volume  = {347},
  year    = {1995},
  pages   = {3797--3833}
}

@article{LGSam,
  author  = {L{\'o}pez-G{\'o}mez, Juli{\'a}n and Sampedro, Juan C.},
  title   = {Bifurcation theory for {Fredholm} operators},
  journal = {J. Differential Equations},
  volume  = {404},
  year    = {2024},
  pages   = {182--250}
}

@article{LGMH20,
  author  = {L{\'o}pez-G{\'o}mez, Juli{\'a}n and Mu{\~n}oz-Hern{\'a}ndez, Eduardo},
  title   = {A spatially heterogeneous predator--prey model},
  journal = {Discrete Contin. Dyn. Syst. Ser. B},
  volume  = {26},
  year    = {2020},
  number  = {4},
  pages   = {2085--2113}
}

@article{LGMH24,
  author  = {L{\'o}pez-G{\'o}mez, Juli{\'a}n and Mu{\~n}oz-Hern{\'a}ndez, Eduardo},
  title   = {A robust multiplicity result in a generalized predator--prey model},
  journal = {Adv. Differ. Equ.},
  volume  = {29},
  year    = {2024},
  number  = {5--6},
  pages   = {437--476}
}

@article{KLMH26,
  author     = {Kuto, Kousuke and L{\'o}pez-G{\'o}mez, Juli{\'a}n and Mu{\~n}oz-Hern{\'a}ndez, Eduardo},
  title      = {High multiplicity and global structure of coexistence states in a predator--prey model with saturation},
  journal    = {Journal of Differential Equations},
  volume     = {462},
  year       = {2026},
  month      = may,
  eid  = {114116},
  doi        = {10.1016/j.jde.2026.114116}
}

\end{document}